\definecolor{dhcol}{rgb}{0,0.5,0}
\definecolor{sccol}{rgb}{0,0,0.5}
\definecolor{amcol}{rgb}{0.5,0,0}
\newcommand{\dsp}{\displaystyle}
\newcommand{\dgreen}{\color{green!45!black}}
\numberwithin{theorem}{section}
\newcommand{\TheTitle}{On the half-space matching method for real wavenumber}
\newcommand{\TheAuthors}{A.-S. Bonnet-Ben Dhia et al.}
\headers{\TheTitle}{\TheAuthors}
\title{{\TheTitle}\thanks{Submitted to the editors DATE.
}}
\author{Anne-Sophie Bonnet-Ben Dhia\thanks{POEMS (CNRS-INRIA-ENSTA Paris), Institut Polytechnique de Paris, Palaiseau, France (\email{anne-sophie.bonnet-bendhia@ensta-paris.fr}, \email{sonia.fliss@ensta-paris.fr})} \and 
  Simon N. Chandler-Wilde\thanks{Department of Mathematics and Statistics, University of Reading, Whiteknights PO Box 220, Reading RG6 6AX, UK
    (\email{s.n.chandler-wilde@reading.ac.uk})}
    \and Sonia Fliss\footnotemark[2]
}
\begin{document}

\newcommand{\rf}[1]{(\ref{#1})}
\newcommand{\mmbox}[1]{\fbox{\ensuremath{\displaystyle{ #1 }}}}	
\newcommand{\hs}[1]{\hspace{#1mm}}
\newcommand{\vs}[1]{\vspace{#1mm}}
\newcommand{\ri}{{\mathrm{i}}}
\newcommand{\re}{{\mathrm{e}}}
\newcommand{\rd}{\mathrm{d}}
\newcommand{\R}{\mathbb{R}}
\newcommand{\Q}{\mathbb{Q}}
\newcommand{\N}{\mathbb{N}}
\newcommand{\Z}{\mathbb{Z}}
\newcommand{\C}{\mathbb{C}}
\newcommand{\K}{{\mathbb{K}}}
\newcommand{\cA}{\mathcal{A}}
\newcommand{\cB}{\mathcal{B}}
\newcommand{\cC}{\mathcal{C}}
\newcommand{\cS}{\mathcal{S}}
\newcommand{\cD}{\mathcal{D}}
\newcommand{\cH}{\mathcal{H}}
\newcommand{\cI}{\mathcal{I}}
\newcommand{\cItilde}{\tilde{\mathcal{I}}}
\newcommand{\cIhat}{\hat{\mathcal{I}}}
\newcommand{\cIcheck}{\check{\mathcal{I}}}
\newcommand{\cIstar}{{\mathcal{I}^*}}
\newcommand{\cJ}{\mathcal{J}}
\newcommand{\cM}{\mathcal{M}}
\newcommand{\cP}{\mathcal{P}}
\newcommand{\cV}{{\mathcal V}}
\newcommand{\cW}{{\mathcal W}}
\newcommand{\scrD}{\mathscr{D}}
\newcommand{\scrS}{\mathscr{S}}
\newcommand{\scrJ}{\mathscr{J}}
\newcommand{\sD}{\mathsf{D}}
\newcommand{\sN}{\mathsf{N}}
\newcommand{\sS}{\mathsf{S}}
 \newcommand{\sT}{\mathsf{T}}
 \newcommand{\sH}{\mathsf{H}}
 \newcommand{\sI}{\mathsf{I}}
\newcommand{\bs}[1]{\mathbf{#1}}
\newcommand{\bb}{\mathbf{b}}
\newcommand{\bd}{\mathbf{d}}
\newcommand{\bn}{\mathbf{n}}
\newcommand{\bp}{\mathbf{p}}
\newcommand{\bP}{\mathbf{P}}
\newcommand{\bv}{\mathbf{v}}

\newcommand{\bxi}{\boldsymbol{\xi}}
\newcommand{\boldeta}{\boldsymbol{\eta}}	
\newcommand{\ts}{\tilde{s}}
\newcommand{\tGamma}{{\tilde{\Gamma}}}
 \newcommand{\tbx}{\tilde{\bx}}
 \newcommand{\tbd}{\tilde{\bd}}
 \newcommand{\txi}{\xi}
\newcommand{\done}[2]{\dfrac{d {#1}}{d {#2}}}
\newcommand{\donet}[2]{\frac{d {#1}}{d {#2}}}
\newcommand{\pdone}[2]{\dfrac{\partial {#1}}{\partial {#2}}}
\newcommand{\pdonet}[2]{\frac{\partial {#1}}{\partial {#2}}}
\newcommand{\pdonetext}[2]{\partial {#1}/\partial {#2}}
\newcommand{\pdtwo}[2]{\dfrac{\partial^2 {#1}}{\partial {#2}^2}}
\newcommand{\pdtwot}[2]{\frac{\partial^2 {#1}}{\partial {#2}^2}}
\newcommand{\pdtwomix}[3]{\dfrac{\partial^2 {#1}}{\partial {#2}\partial {#3}}}
\newcommand{\pdtwomixt}[3]{\frac{\partial^2 {#1}}{\partial {#2}\partial {#3}}}
\newcommand{\bnabla}{\boldsymbol{\nabla}}
\newcommand{\dive}{\boldsymbol{\nabla}\cdot}
\newcommand{\curl}{\boldsymbol{\nabla}\times}
\newcommand{\Phixy}{\Phi(\bx,\by)}
\newcommand{\PhiOxy}{\Phi_0(\bx,\by)}
\newcommand{\dxPhixy}{\pdone{\Phi}{n(\bx)}(\bx,\by)}
\newcommand{\dyPhixy}{\pdone{\Phi}{n(\by)}(\bx,\by)}
\newcommand{\dxPhiOxy}{\pdone{\Phi_0}{n(\bx)}(\bx,\by)}
\newcommand{\dyPhiOxy}{\pdone{\Phi_0}{n(\by)}(\bx,\by)}
\newcommand{\eps}{\varepsilon}
\newcommand{\real}[1]{{\rm Re}\left[#1\right]} 
\newcommand{\im}[1]{{\rm Im}\left[#1\right]}
\newcommand{\ol}[1]{\overline{#1}}
\newcommand{\ord}[1]{\mathcal{O}\left(#1\right)}
\newcommand{\oord}[1]{o\left(#1\right)}
\newcommand{\Ord}[1]{\Theta\left(#1\right)}
\newcommand{\hsnorm}[1]{||#1||_{H^{s}(\bs{R})}}
\newcommand{\hnorm}[1]{||#1||_{\tilde{H}^{-1/2}((0,1))}}
\newcommand{\norm}[2]{\left\|#1\right\|_{#2}}
\newcommand{\normt}[2]{\|#1\|_{#2}}
\newcommand{\on}[1]{\Vert{#1} \Vert_{1}}
\newcommand{\tn}[1]{\Vert{#1} \Vert_{2}}
\newcommand{\xt}{\mathbf{x},t}
\newcommand{\PhiF}{\Phi_{\rm freq}}
\newcommand{\cone}{{c_{j}^\pm}}
\newcommand{\ctwo}{{c_{2,j}^\pm}}
\newcommand{\cthree}{{c_{3,j}^\pm}}
\newtheorem{thm}{Theorem}[section]
\newtheorem{lem}[theorem]{Lemma}
\newtheorem{defn}[theorem]{Definition}
\newtheorem{prop}[theorem]{Proposition}
\newtheorem{cor}[theorem]{Corollary}
\newtheorem{rem}[theorem]{Remark}
\newtheorem{conj}[theorem]{Conjecture}
\newtheorem{ass}[theorem]{Assumption}
\newtheorem{example}[theorem]{Example} 
\newcommand{\tH}{\widetilde{H}}
\newcommand{\Hze}{H_{\rm ze}} 	
\newcommand{\uze}{u_{\rm ze}}		
\newcommand{\dimH}{{\rm dim_H}}
\newcommand{\dimB}{{\rm dim_B}}
\newcommand{\IntClosOm}{\mathrm{int}(\overline{\Omega})}
\newcommand{\IntClosOmOne}{\mathrm{int}(\overline{\Omega_1})}
\newcommand{\IntClosOmTwo}{\mathrm{int}(\overline{\Omega_2})}
\newcommand{\Ccomp}{C^{\rm comp}}
\newcommand{\tCcomp}{\tilde{C}^{\rm comp}}
\newcommand{\uC}{\underline{C}}
\newcommand{\utC}{\underline{\tilde{C}}}
\newcommand{\oC}{\overline{C}}
\newcommand{\otC}{\overline{\tilde{C}}}
\newcommand{\capcomp}{{\rm cap}^{\rm comp}}
\newcommand{\Capcomp}{{\rm Cap}^{\rm comp}}
\newcommand{\tcapcomp}{\widetilde{{\rm cap}}^{\rm comp}}
\newcommand{\tCapcomp}{\widetilde{{\rm Cap}}^{\rm comp}}
\newcommand{\hcapcomp}{\widehat{{\rm cap}}^{\rm comp}}
\newcommand{\hCapcomp}{\widehat{{\rm Cap}}^{\rm comp}}
\newcommand{\tcap}{\widetilde{{\rm cap}}}
\newcommand{\tCap}{\widetilde{{\rm Cap}}}
\newcommand{\ccap}{{\rm cap}}
\newcommand{\ucap}{\underline{\rm cap}}
\newcommand{\uCap}{\underline{\rm Cap}}
\newcommand{\cCap}{{\rm Cap}}
\newcommand{\ocap}{\overline{\rm cap}}
\newcommand{\oCap}{\overline{\rm Cap}}
\DeclareRobustCommand
{\mathringbig}[1]{\accentset{\smash{\raisebox{-0.1ex}{$\scriptstyle\circ$}}}{#1}\rule{0pt}{2.3ex}}
\newcommand{\cirH}{\mathringbig{H}}
\newcommand{\cirHs}{\mathringbig{H}{}^s}
\newcommand{\cirHt}{\mathringbig{H}{}^t}
\newcommand{\cirHm}{\mathringbig{H}{}^m}
\newcommand{\cirHzero}{\mathringbig{H}{}^0}
\newcommand{\deO}{{\partial\Omega}}
\newcommand{\OO}{{(\Omega)}}
\newcommand{\Rn}{{(\R^n)}}
\newcommand{\Id}{{\mathrm{Id}}}
\newcommand{\gap}{\mathrm{Gap}}
\newcommand{\ggap}{\mathrm{gap}}
\newcommand{\isom}{{\xrightarrow{\sim}}}
\newcommand{\half}{{1/2}}
\newcommand{\mhalf}{{-1/2}}
\newcommand{\inter}{{\mathrm{int}}}

\newcommand{\Hsp}{H^{s,p}}
\newcommand{\Htq}{H^{t,q}}
\newcommand{\tHsp}{{{\widetilde H}^{s,p}}}
\newcommand{\SP}{\ensuremath{(s,p)}}
\newcommand{\Xsp}{X^{s,p}}

\newcommand{\dd}{{d}}\newcommand{\pp}{{p_*}}

\newcommand{\Rnn}{\R^{n_1+n_2}}
\newcommand{\Tr}{{\mathrm{Tr}}}
\newcommand{\sO}{\mathsf{O}}
\newcommand{\sC}{\mathsf{C}}
\newcommand{\sA}{\mathsf{A}}
\newcommand{\sM}{\mathsf{M}}
\newcommand{\sF}{\mathsf{F}}
\newcommand{\sG}{\mathsf{G}}
\newcommand{\mS}{\Gamma}
\newcommand{\omS}{{\overline{\mS}}}
\renewcommand{\O}{{\mathcal O}}
\newcommand{\NO}{{\mathcal N}}
\newcommand*{\ZZ}{{\mathbb{Z}}}
\newcommand{\bsx}{\boldsymbol{x}}
\newcommand{\bsy}{\boldsymbol{y}}
\newcommand{\bsz}{\boldsymbol{z}}
\newcommand{\bsw}{\boldsymbol{w}}
\newcommand{\bx}{\bsx}
\newcommand{\by}{\bsy}
\newcommand{\bz}{\bsz}
\maketitle

\begin{abstract}
The Half-Space Matching (HSM) method has recently been developed as a new method for the solution of 2D scattering problems with complex backgrounds, providing an alternative to {Perfectly Matched Layers} (PML) or other artificial boundary conditions. Based on  half-plane representations for the solution, the scattering problem is rewritten as a system coupling (1) a standard finite element discretisation localised around the scatterer and (2) integral equations whose unknowns are traces of the solution on the boundaries of a finite number of  overlapping half-planes contained in the domain.  While satisfactory numerical results have been obtained for real wavenumbers,  well-posedness and equivalence  of this HSM formulation to the original scattering problem have been established only for complex wavenumbers. In the present paper we show, in the case of a homogeneous background, that the HSM formulation is equivalent to the original scattering problem also for real wavenumbers, and so is well-posed, provided the traces satisfy radiation conditions at infinity analogous to the standard Sommerfeld radiation condition.  {As a key component of our argument we show that, if the trace on the boundary of a half-plane satisfies our new radiation condition, then the corresponding solution to the half-plane Dirichlet problem satisfies the Sommerfeld radiation condition in a slightly smaller half-plane.} We expect that this last result will be of independent interest, in particular in studies of rough surface scattering.
\end{abstract}

\begin{keywords}
Helmholtz equation, scattering, Sommerfeld radiation condition,  integral equation, domain decomposition, uniqueness, rough surface scattering
\end{keywords}

\begin{AMS}
35J05, 35J25, 35P25, 45B05, 45F15, 65N30, 65N38, 78A45
\end{AMS}

\section{Introduction and the scattering problem}
\label{sec-introduction}
\subsection{The HSM method} Recently a new method, called the Half-Space Matching (HSM) method, has been developed as an (exact) artificial boundary condition for two-dimensional time-harmonic  scattering problems. 
This method is based on explicit or semi-explicit expressions for the outgoing solutions of radiation problems in half-planes, these expressions established by using Fourier, generalized Fourier, or Floquet transforms when the background is, respectively, homogeneous \cite{BB-Fliss-Tonnoir-2018, Bon-Fli-Tja-2019} (and possibly anisotropic \cite{tonnoir2015,Bar-Bon-Fli-Tja-2018,tjandrawidjaja2019}), stratified {\cite{Ott}}, or periodic \cite{Fli-Jol-2009,Fliss:2010}. The domain exterior to a bounded region enclosing the scatterers is covered by a finite number $\mathcal{N}$ of half-planes (at least three). The unknowns of the formulation are the traces $\varphi^1,\ldots,\varphi^\mathcal{N}$ of the solution on the boundaries of these half-planes and the restriction of the solution to the bounded region. The system of equations which couples these unknowns is derived by writing compatibility conditions between the different representations of the solution. This {coupled} system includes second-kind integral equations on the infinite boundaries of the half-planes.

This new formulation is attractive and versatile as a method to truncate computational domains in  problems of scattering by localized inhomogeneities in complex backgrounds, including backgrounds that may be different at infinity in different directions. It has been employed successfully in numerical implementations {for various applications, like
optical waveguides {(including cases with different stratifications in different parts of the background domain)} \cite{Ott}, or ultrasonic non-destructive testing (with an anisotropic elastic background{\dgreen)} \cite{tonnoir2015,tjandrawidjaja2019}.}

Up to now the theoretical and numerical analysis of the method has remained an open question in the challenging, and practically relevant, non-dissipative case when waves radiate out to infinity. But a rather complete analysis has been carried out in the {simpler} dissipative case, when the solution (and its traces) decay exponentially at infinity. In that case the analysis can be done using an $L^2$ framework for the traces: the associated formulation has been shown to be of Fredholm type and well-posed in a number of cases where the background is homogeneous (but not necessarily isotropic) \cite{BB-Fliss-Tonnoir-2018,Bon-Fli-Tja-2019}, with the sesquilinear form of the weak formulation coercive plus compact, enabling the numerical analysis of the method \cite{Bon-Fli-Tja-2019}.
This analysis fails in the non-dissipative case, not least because of  the slow decay at infinity of the solution which results in non-$L^2$ traces. 

\subsection{Our main results} \label{sec:main results} In this paper we address well-posedness of the HSM formulation in the non-dissipative case for the scalar Helmholtz equation  when the background is homogeneous and isotropic, so that
\begin{equation} \label{eq:he}
-\Delta u - k^2 u = 0
\end{equation}
outside some ball. (Here, assuming $\re^{-\ri \omega t}$ time dependence with $\omega>0$, $k=\omega/c>0$ and $c>0$ are the constant wavenumber and wave speed, respectively, outside the ball.) For such configurations, it is well known that to achieve a well-posed scattering problem the scattered field $u$ must satisfy the Sommerfeld radiation condition
\begin{equation}\label{eq:Sommerfeld}
		 \frac{\partial u(\bsx)}{\partial r} - \ri k u(\bsx) = o\left(r^{-1/2}\right)\quad\text{as}\;r:=|\bsx|\rightarrow +\infty,
\end{equation}	
uniformly with respect to $\widehat \bsx := \bsx/r$. Further, it is well known that, if $u$ satisfies \eqref{eq:he} outside some ball and \eqref{eq:Sommerfeld}, then
\begin{equation} \label{eq:ffp}
u(\bsx) = \frac{\re^{\ri k r}}{r^{1/2}}\left(F(\widehat\bsx) + O(r^{-1})\right), \quad \mbox{as} \quad r\to \infty,
\end{equation}
uniformly in $\widehat\bsx := \bsx/r$, where $F\in C^\infty(S^1)$, with $S^1$ the unit circle, is the so-called {\it far-field pattern} (e.g., \cite[Lemma 2.5]{ChGrLaSp:11}).

The main results of this paper (Theorems \ref{thm:HSMMUnique} and \ref{thm:HSMMUnique_casgeneral}) are to establish  well-posedness for the HSM formulations,  stated in detail as \eqref{eq:HSMM_real} and \eqref{eq:HSMM_casgeneral}  below, in the case that $k>0$ and the background is homogeneous and isotropic, so that the scattered field $u$ satisfies \eqref{eq:Sommerfeld} and \eqref{eq:he} outside some ball. Precisely, we show that the HSM formulations are well-posed and equivalent to the original scattering problems provided we require that each half-plane trace $\varphi^j$  is locally in $L^2$ and has the asymptotic behaviour predicted by \eqref{eq:ffp}, meaning that
\begin{equation} \label{eq:traceRC}
\varphi^j(\bsx) = c_\pm^j \, \frac{\re^{\ri k r}}{r^{1/2}}\left(1+O(r^{-1})\right),
\end{equation}
for some constants $c_\pm^j$, as $\bsx$ tends to infinity in the $\pm$ directions along the infinite boundary of the half-plane. Note that condition \eqref{eq:traceRC} can be seen as a radiation condition for the half-plane trace $\varphi^j$, playing the role in the HSM formulation for $k>0$ that the Sommerfeld radiation condition \eqref{eq:Sommerfeld} plays in the formulation of the original scattering problem.

 Since (i) the HSM system of equations is derived from the unique solution to the scattering problem and (ii) the half-plane traces  of the solution to the scattering problem do satisfy \eqref{eq:traceRC} (because \eqref{eq:ffp} holds), we will see that existence of a solution holds by construction. 
The challenge  remaining in order to show well-posedness is to establish uniqueness. There are two difficult aspects to this challenge. The first is to show that, if $\varphi^j$ satisfies \eqref{eq:traceRC} and $u$ is the solution reconstructed from $\varphi^j$ in the corresponding half-plane, so that $u$ satisfies \eqref{eq:he} in the half-plane and $u=\varphi^j$ on the half-plane boundary, then $u(\bx)= O(r^{-1/2})$ as $r\to\infty$ and $u$ satisfies an appropriate version of the Sommerfeld radiation condition \eqref{eq:Sommerfeld}. These properties are established in \S\ref{sec:half-plane}. The second challenge, given that we assume  {\it ab initio} only that the traces $\varphi^j$ are locally $L^2$, is to show that each trace $\varphi^j$ is locally in the trace space $H^{1/2}$,  so that the reconstructed solution $u$ is locally in $H^1$ as required. But this difficulty arises already in the corresponding formulation in the dissipative case, and the proof in that case is sketched  in \cite[\S3.3]{BB-Fliss-Tonnoir-2018}. As preparation for the more difficult non-dissipative case, we expand on that argument in \S\ref{sec:uniquenessD} below.

\subsection{The significance of our results} The main significance of Theorems \ref{thm:HSMMUnique} and \ref{thm:HSMMUnique_casgeneral} is that these are the first well-posedness results in the important non-dissipative case for the HSM method, a method which, as discussed above, has already proved effective for computing scattering by localised inhomogeneities in a  range of complex backgrounds. Our theorems, challenging as they are to prove, are for the simplest case when the background is homogeneous and isotropic. However, we expect that these results and formulations will be an important stepping stone to more complex cases, and we discuss this further in concluding remarks to the paper.

Our main uniqueness result, Theorem \ref{thm:HSMMUnique}, is also important because it is a crucial ingredient in the proof of well-posedness in \cite[\S5]{FrenchBritish1} of the so-called {\it complex-scaled HSM method}, proposed recently in \cite{FrenchBritish1}. This new formulation is a version of the HSM method in the non-dissipative case that, in the spirit of complex-scaling in PML (e.g., \cite{Col-Mon-1998}), achieves the $L^2$ framework of the dissipative case by analytically continuing the half-plane traces into the complex plane, so that the original half-plane boundary is replaced by a path in the complex plane on which the (analytically-continued) trace is exponentially decreasing.

Additionally, we expect that the results that we establish (Lemma \ref{lem:UjinL2}, \S\ref{sec:half-plane}) on properties of the half-plane solution operator, that takes the trace on the boundary of a half-plane and recovers the half-plane solution, will be of independent interest. Indeed, there is large interest in so-called {\it rough surface scattering problems}, where the Helmholtz equation (or more complicated vector equations) are solved in a non-locally perturbed half-plane $D$ with boundary or transmission conditions on the rough surface $\partial D$ (e.g., \cite{CWZ:98,CWRoZh99,ACWH:02,ArensHohage05,LeRi10,CWElschner10,PinelBoulier13,HuElschner14,Zhang18,HuLuRathsfeld:21}). In this context it is usual, to ensure uniqueness, to impose the so-called {\it upwards propagating radiation condition} (e.g., \cite{CWRoZh99,ArensHohage05,LeRi10,CWElschner10,HuElschner14,Zhang18}), which is precisely a requirement that, in some half-plane above the rough surface, the solution can be represented as the action of this half-plane solution operator on some $L^\infty$ data on the boundary of the half-plane. This rough surface scattering application, in particular the use of the upwards propagating radiation condition, has driven significant study of this half-plane solution operator (e.g.\ \cite{HalfPlaneRep,CW-Impedance-1997}, \cite[\S2]{CWZ:98}, \cite{ArensHohage05}, \cite[\S2.3]{HuElschner14}). The results in \S\ref{sec:half-plane} contribute to this study, shedding light on the relationship between the upwards propagating radiation condition and the Sommerfeld radiation condition, complementing previous work, especially \cite[Theorem 2.9]{CWZ:98}, \cite{ArensHohage05}, and \cite{HuLuRathsfeld:21}.

\subsection{The scattering problem and the structure of the paper} \label{sec:scattering problem} Let us spell out in more detail the 2D scattering problem that we consider in this paper. The propagation domain $\Omega$ is $\R^2$, or $\R^2$ minus a set of bounded Lipschitz obstacles so that $\Omega$ is a Lipschitz domain. We seek $u$ that satisfies an inhomogeneous Helmholtz equation,
\begin{equation}\label{eq:helmholtz-variable}
			-\Delta u -  k^2\,\rho u  = f \; \text{in} \; \Omega.\\[4pt]
\end{equation}
Here $\rho$ is a function in $L^\infty(\Omega)$ 
such that $\rho-1$ is compactly supported, so that, if $\rho$ is  real-valued and bounded below by a positive constant, \eqref{eq:helmholtz-variable} models propagation in a domain with a local perturbation in wave speed\footnote{It is straightforward to incorporate more elaborate local behaviour (e.g.\ local anisotropies, or local inhomogeneities in density as well as wave speed), as long as the resulting problem remains well-posed.}. The source term $f\in L^2(\Omega)$ is also assumed to be compactly supported, so that \eqref{eq:he} holds outside some ball. We suppose that the wavenumber is such that $k>0$ in the non-dissipative case, and such that $\Re(k)>0$ and $\Im(k)>0$ in the dissipative case. In the case when $\Omega\subsetneq \R^2$ we require also a standard (e.g. Dirichlet or Neumann) boundary condition on $\Gamma:= \partial \Omega$.  As is standard we seek a solution $u\in H^1(\Omega)$ in the dissipative case and  $u\in H^1_{\mathrm{loc}}(\Omega):=\{v_{|\Omega}; v\in H^1_{\mathrm{loc}}(\R^2)\}$ in the case $k>0$. In the non-dissipative case we require also that $u$ satisfies the Sommerfeld radiation condition \eqref{eq:Sommerfeld}.

To explain the HSM method, our uniqueness argument, and the role of the radiation condition \eqref{eq:traceRC}, we consider two specific instances of the problem \eqref{eq:helmholtz-variable}. To get the main ideas across and to prove the key uniqueness and well-posedness result, Theorem \ref{thm:HSMMUnique}, we focus first on the case when $\Omega = \R^2\setminus \overline{\O}$, for some convex polygon $\O$, with $\rho\equiv 1$ and $f\equiv 0$, so that \eqref{eq:he} holds in $\Omega$, imposing a Dirichlet boundary condition $u=g$ on $\Gamma$. 
In this case, where $\Omega$ is the exterior of a convex polygon and \eqref{eq:he} holds in $\Omega$, the HSM formulation is a system of second kind integral equations, in which the unknowns are the traces of $u$ on the finite number of half-planes that abut the sides of the polygon $\O$. In \S\ref{section-HSMcomplexfreq} we recall the HSM formulation  for this problem in the dissipative case, in particular how uniqueness is proved. In \S\ref{section-HSMrealfreq} we prove our main Theorem \ref{thm:HSMMUnique}, i.e., we prove well-posedness in the non-dissipative case under the additional radiation condition \eqref{eq:traceRC}.

In \S\ref{sec-CHSMgeneralcase} we consider the more involved  case where $f$ is non zero and/or $\rho\not\equiv 1$, in which case the second kind integral equation formulation is coupled to a local variational formulation in a bounded region  $\Omega_b$ containing $\partial \Omega$ and the supports of $f$ and $\rho-1$. In this case we assume, to be specific, that the boundary condition on $\partial \Omega$ is a homogeneous Neumann condition. (The changes needed, to the formulation and well-posedness argument, to address other boundary conditions, and/or inhomogeneities in the boundary condition, indeed to include other types of compactly supported inhomogeneities, are straightforward.)  The main result in this case, the uniqueness and well-posedness result Theorem \ref{thm:HSMMUnique_casgeneral}, is proved by use of Theorem \ref{thm:HSMMUnique} and by adapting the proof of \cite[Proposition 6.1]{FrenchBritish1}.
Section \ref{sec:conclusion} provides a brief conclusion and gives an indication of more complex configurations to which the same methods and arguments are expected to apply.

\section{The HSM method for complex wavenumber}
\label{section-HSMcomplexfreq}
\subsection{The Half-Space Matching formulation} \label{sec:HSMform}
In this section, as preparation for studying the HSM method for real wavenumber, we first recall what is known about the method in the dissipative case. For this purpose, {as discussed in \S\ref{sec:scattering problem},} we consider the Dirichlet problem for complex wavenumber ($\Im(k)>0$, $\Re(k)>0$) in the exterior $\Omega$ of a convex polygon  $\O$.  Thus, for given data $g\in H^{1/2}(\Gamma)$, where $\Gamma=\partial \Omega =\partial\O$, we consider the Dirichlet problem
\begin{equation}\label{pb:probleme_Dir_diss}
	\begin{cases}
		-\Delta u -  k^2\,u  = 0 \; \text{in} \; \Omega,\\[4pt]
		u=g\;\text{on}\;\Gamma.
	\end{cases}
\end{equation}
It is well known that Problem \eqref{pb:probleme_Dir_diss} has a unique solution $u\in H^1(\Omega)$.


If $\O$ is a polygon with $\NO$ edges  denoted $\Gamma^1,\ldots ,\Gamma^{\NO}$, the domain $\Omega$ is the union of $\NO$ overlapping half-planes $\Omega^j$, $j=1,...,\NO$,  that abut the $\NO$ edges of the polygon $\O$. The angle between $\Gamma^j$ and $\Gamma^{j +1}$ is denoted as $\Theta^{j, j +1}$ or equivalently $\Theta^{j+1,j}$, where, here and in the sequel, $j\in\Z/\NO\Z$ where $\Z/\NO\Z$ is the ring of integers modulo $\NO$. This convenient notation means that $j=0$ is equivalent to $j=\NO$ and $j=-1$  to $j=\NO-1$.
Note finally that, because of the convexity, one has $0 < \Theta^{j, j+1} < \pi$ for all $j\in\Z/\NO\Z$.

\begin{figure}[ht]
	\centering
	\begin{subfigure}{.5\textwidth} 
		\begin{tikzpicture}[scale=0.6]
			\draw[rotate=-30] (-4.5,0) -- (4.5,0);
			\draw[rotate=-30] ({sqrt(3)+1},-3) -- ({1-5/3*sqrt(3)}, 5);
			\draw[rotate=-30] ({-sqrt(3)-1},-3) -- ({5/3*sqrt(3)-1}, 5);
			\filldraw[rotate=-30, fill=gray!40!white, draw=black] (-1, 0) -- (1, 0) -- (0, {sqrt(3)}) -- (-1, 0) ;
			\draw[densely dotted,rotate=-30, ->] (-0.1, -0.4) -- (-0.1, -0.9);
			\draw[densely dotted,rotate=-30, ->] (-0.1, -0.4) -- (0.4, -0.4);
			\draw[densely dotted,->](1.3,0.5) -- (1.8,0.5);
			\draw[densely dotted,->](1.3,0.5) -- (1.3,1);
			\draw[densely dotted,rotate=30, ->] (0.7, 1.3) -- (0.7, 1.8);
			\draw[densely dotted,rotate=30, ->] (0.7, 1.3) -- (0.2, 1.3);
			\node at (0.3, 0.5) {$\mathcal{O}$};
			\node at (1.3, -3.5) {$\Sigma^1$};
			\node at (3.5, 3.5) {$\Sigma^2$};
			\node at (-3.5, 1.5) {$\Sigma^3$};
			\node at (2., 0.3) {$x^1_1$};
			\node at (1.55, 1.2) {$x^1_2$};
			\node at (-0.3, 2.2) {$x^2_1$};
			\node at (-0.8, 1.2) {$x^2_2$};
			\node at (-0.7, -1.1) {$x^3_1$};
			\node at (0.4, -0.7) {$x^3_2$};
			\node at (1.8, 2.7) {$\Theta^{1,2}$};
			\node at (-2.5, 0.5) {$\Theta^{2,3}$};
			\node at (1.6, -1.7) {$\Theta^{3,1}$};
			\draw[densely dotted] (-1.8,1) arc (150:210:1);
			\draw[densely dotted](0.9,-1.5) arc (270:330:1);
			\draw[densely dotted] (1.7, 2) arc (30:90:1);
		\end{tikzpicture}
	\end{subfigure}%
	\begin{subfigure}{.5\textwidth} 
		\begin{tikzpicture}[scale=0.5]
			\draw (-5.5,-1) -- (5.5,-1);
			\draw (-5.5,1) -- (5.5,1);
			\draw (-2,-4.5) -- (-2, 4.5);
			\draw (2,-4.5) -- (2, 4.5);
			\filldraw[fill=gray!40!white, draw=black] (-2, -1) -- (2, -1) -- (2,1) -- (-2, 1) -- (-2, -1) ;
			\draw[densely dotted,->] (2.4, -0.3) -- (2.9, -0.3);
			\draw[densely dotted,->] (2.4, -0.3) -- (2.4, 0.2);
			\draw[densely dotted,->] (-2.4, 0.3) -- (-2.9, 0.3);
			\draw[densely dotted,->] (-2.4, 0.3) -- (-2.4, -0.2);
			\draw[densely dotted,->] (0.3, 1.4) -- (0.3, 1.9);
			\draw[densely dotted,->] (0.3, 1.4) -- (-0.2, 1.4);
			\draw[densely dotted,->] (-0.2, -1.4) -- (-0.2, -1.9);
			\draw[densely dotted,->] (-0.2, -1.4) -- (0.3, -1.4);
			\node at (0, 0) {$\mathcal{O}$};
			\node at (2.5, -4.2) {$\Sigma^1$};
			\node at (5., 1.5) {$\Sigma^2$};
			\node at (-2.5, 4.2) {$\Sigma^3$};
			\node at (-5., -1.5) {$\Sigma^4$};
			\node at (3.4, -0.3) {$x^1_1$};
			\node at (2.7, 0.5) {$x^1_2$};
			\node at (0.5, 2.3) {$x^2_1$};
			\node at (-0.6, 1.4) {$x^2_2$};
			\node at (-3.3, 0.5) {$x^3_1$};
			\node at (-2.8, -0.5) {$x^3_2$};
			\node at (-0.2, -2.2) {$x^4_1$};
			\node at (0.7, -1.5) {$x^4_2$};
			\node at (3.3, 2.) {$\Theta^{1,2}$};
			\node at (-3.3, 2.) {$\Theta^{2,3}$};
			\node at (-3.3, -2.) {$\Theta^{3,4}$};
			\node at (3.3, -2.) {$\Theta^{4,1}$};
			\draw[densely dotted] (3,1) arc (0:90:1);
			\draw[densely dotted] (-2,2) arc (90:180:1);
			\draw[densely dotted] (-3,-1) arc (180:270:1);
			\draw[densely dotted] (2,-2) arc (270:360:1);
		\end{tikzpicture}
	\end{subfigure}
	\caption{Examples of polygons $\mathcal{O}$ for $\NO = 3$ and $4$ and associated notations.}
	\label{fig:rmd}
\end{figure}
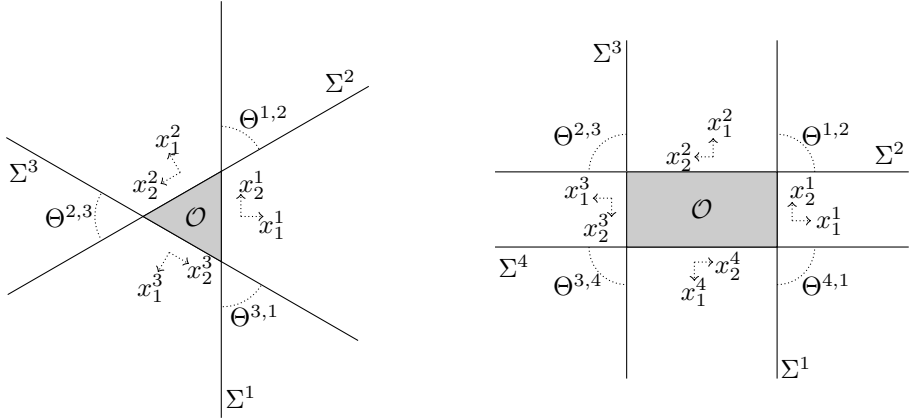

It is convenient to make use of  local coordinate systems $\bsx^j=(x^j_1,x^j_2)$ in each half-space $\Omega^j$. The origin of all of them is the centroid $O$ of the polygon $\mathcal{O}$. We define the Cartesian coordinate system $(O, x^1_1, x^1_2)$ such that the axis $Ox^1_1$ is orthogonal to $\Gamma^1$ and directed into the exterior of the polygon, while the axis $Ox^1_2$ is $\pi/2$ counter clockwise to $Ox^1_1$. The other local coordinate systems $(O, x^j_1, x^j_2)$ are defined recursively as follows:
\begin{equation}
	\forall  j\in \Z/\NO\Z,  \quad \begin{array}{|l}
		x^{j+1}_1  :=  \cos ( \Theta^{j, j +1})\, x^j_1 + \sin (\Theta^{j, j +1})\, x^j_2, \\  [5pt]
		x^{j+1}_2   := -\sin (\Theta^{j, j +1})\,x^j_1 +\cos (\Theta^{j, j +1})\, x^j_2. \end{array}
\end{equation}
Defining $a^j$, for $j=1,...,\NO$, to be the distance of the centroid of the polygon to the edge $\Gamma^j$, each half-plane $\Omega^j$ is defined in the local coordinate system $(O, x^j_1, x^j_2)$
as
\[
\Omega^j := \{ x^j_1 >a^j\} \times \{ x^j_2 \in \R\},\]
and its boundary, denoted by $\Sigma^j$, is given by
\[
\Sigma^j := \{ x^j_1 = a^j\} \times \{ x^j_2 \in \R\}.\]
As explained in the introduction, the formulation uses the representation of the solution in each half-plane $\Omega^j$ in terms of its trace on $\Sigma^j$. More precisely, let us denote
\begin{equation}
	\label{eq:def_traces}
	\varphi^j:=u\big|_{\Sigma^j}\quad \text{for}\;j =1,...,\NO,
\end{equation}
so that
\begin{equation}
	\label{eq:def_traces2}
	u\big|_{\Omega^j}=U^j(\varphi^j)\quad \text{for}\;j = 1,...,\NO,
\end{equation}
where, for $j=1,...,\NO$ and $\psi\in H^{1/2}(\Sigma^j)$,  $U^j(\psi)\in H^1(\Omega^j)$ is the unique solution of
\begin{equation}\label{eq:half-space}
	\begin{array}{|lcr}
		-\Delta U^j -  k^2\, U^j = 0 & \text{in} & \Omega^j,\\[4pt]
		\;U^j=\psi&\text{on}&\Sigma^j,
	\end{array}
\end{equation}
this solution being well-defined since $\Im(k)\neq 0$.
We can express $U^j(\psi)$ explicitly in terms of its trace $\psi$  using a Green's function  representation:
\begin{equation} \label{eq:hprGreen}
	U^j (\psi) ({\bsx}^j) = \int_{\Sigma^j}\frac{\partial G^j({\bsx}^j,{\bsy}^j)}{\partial n({\bsy}^j)}\,\psi({\bsy}^j)\, \rd s({\bsy}^j),\quad {\bsx}^j \in \Omega^j,
\end{equation}
where $G^j({\bsx}^j,{\bsy}^j)$ is the Dirichlet Green's function for $\Omega^j$ and $n({\bsy}^j)$ is the unit normal to $\Sigma^j$ that points into $\Omega^j$. Explicitly, $G^j({\bsx}^j,{\bsy}^j)= \Phi({\bsx}^j,{\bsy}^j)-\Phi(\widetilde{\bsx}^j,{\bsy}^j)$, with $\widetilde{\bsx}^j$ the image of ${\bsx}^j$ in $\Sigma^j$, where
$\Phi({\bsx},{\bsy})$ is the standard fundamental solution of the Helmholtz equation defined by
\begin{equation} \label{eq:Greenfct}
	\Phi({\bsx},{\bsy}) := \frac{\ri}{4}H_0^{(1)}( k|\bsx-\bsy|), \quad {\bsx,\,\bsy\in \R^2, \;\;\bsx\neq \bsy},
\end{equation}
where $H^{(1)}_n$ is the Hankel function of the first kind of order $n$.
This leads finally to
\begin{equation} \label{eq:hprPhi}
	U^j (\psi) ({\bsx}^j) =2 \int_{\Sigma^j}\frac{\partial \Phi({\bsx}^j,{\bsy}^j)}{\partial y_1^j}\,\psi({\bsy}^j)\, \rd s({\bsy}^j),\quad {\bsx}^j \in \Omega^j,
\end{equation}
which  can be rewritten as
\begin{equation} \label{eq:hpr}
	U^j (\psi) ({\bsx}^j) = \int_{\R}\mathcal{H}(k;x_1^j-a^j,x_2^j-y_2^j)\,\psi(a^j,y_2^j)\, \rd y_2^j,\quad {\bsx}^j=(x_1^j,x_2^j) \in \Omega^j,
\end{equation}
where we have set
\begin{equation} \label{eq:hpr_kernel}
\mathcal{H}(k;x_1,x_2):=\frac{\ri kx_1}{2}\frac{H^{(1)}_1( k |\bsx|)}{|\bsx|}.
\end{equation}
To derive a system of equations whose unknowns are the traces $\varphi^j$ of the solution, it suffices to observe that the half-plane representations must coincide where they coexist.
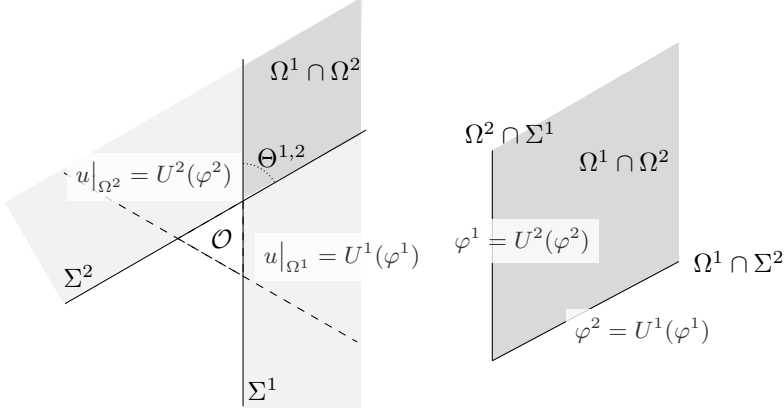
\begin{figure}[ht]
	\centering
	\begin{subfigure}{.450\textwidth} 
		\begin{tikzpicture}[scale=0.5]
			\fill[color=gray!10,rotate=-60] (-4,-4) -- (-4,7) -- (-0.9,5) -- (-0.9,-4);
			\fill[color=gray!10] (0.9,-4)rectangle(4,5);
			\fill[color=gray!30] (0.9,1.6) -- (0.9,5.1) -- (4,6.9) -- (4,3.25);
			\draw[rotate=-30,dashed] (-4.5,0) -- (4.5,0);
			\draw[rotate=-30] ({sqrt(3)+1},-3) -- ({1-5/3*sqrt(3)}, 5);
			\draw[rotate=-30] ({-sqrt(3)-1},-3) -- ({5/3*sqrt(3)-1}, 5);
			\draw[rotate=-30, dashed] (-1, 0) -- (1, 0) -- (0, {sqrt(3)}) -- (-1, 0) ;
			\node at (0.3, 0.5) {$\mathcal{O}$};
			\node at (1.4, -3.5) {$\Sigma^1$};
			\node at (-3.5, -0.5) {$\Sigma^2$};
			\node at (1.9, 2.7) {$\Theta^{1,2}$};
			\draw[densely dotted] (1.7, 2) arc (30:90:1);		
			\draw (3.5,0) node[fill=white, opacity=.8]   {\small $u\big|_{\Omega^1}=U^1(\varphi^1)$};
			\draw (-1.5,1.5) node[above,fill=white, opacity=.8]  {\small $u\big|_{\Omega^2}=U^2(\varphi^2)$};
			 \draw (2.8,5) node  {${\Omega^1}\cap{\Omega^2}$};
		\end{tikzpicture}
	\end{subfigure}
	\begin{subfigure}{.450\textwidth} 
		\begin{tikzpicture}[scale=0.8]
			\fill[color=gray!30] (-3,0) -- (-3,3.5) -- (0.1,5.3) -- (0.1,1.65);
			\draw (-3,0) -- (-3,3.5);
			\draw (-3,0) -- (0.1,1.65);
			\node at (-2.7, 3.8) {$\Omega^2\cap\Sigma^1$};
			\node at (1.1, 1.65) {$\Omega^1\cap \Sigma^2$};
			\draw (-0.5,0.5) node[fill=white, opacity=.8]  {\small $\varphi^2=U^1(\varphi^1)$};
			\draw (-2.5,2) node[fill=white, opacity=.8]  {\small $\varphi^1=U^2(\varphi^2)$};
			\draw (-.8,3.) node[above]  {${\Omega^1}\cap{\Omega^2}$};
		\end{tikzpicture}
	\end{subfigure}  %
	%
	%
	\caption{Compatibility condition in $\Omega^1\cap\Omega^2$.}
	\label{fig:comp_cond}
\end{figure}
For instance, in the overlapping zone $\Omega^1\cap\Omega^2$ (see Figure \ref{fig:comp_cond}) we have
\begin{equation}\label{eq:comp_quarter}
	u=U^1(\varphi^1)=U^2(\varphi^2)\quad \text{in}\;\Omega^1\cap\Omega^2,
\end{equation}
and in particular
\begin{equation}\label{eq:comp1}
	\varphi^2=U^1(\varphi^1)\quad \text{on}\;\Omega^1\cap\Sigma^2,
\end{equation}
which is a relation linking $\varphi^1$ and $\varphi^2$. By introducing the operator $D_{1,2}:L^2(\Sigma^1)\rightarrow L^2(\Omega^1\cap\Sigma^2)$ defined by
\[
 D_{1,2}\,\psi:=U^1(\psi)\big|_{\Omega^1\cap\Sigma^2},\quad \psi \in L^2(\Sigma^1),
\]
the relation \eqref{eq:comp1} can be rewritten as
\begin{equation}\label{eq:comp1_bis}
	\varphi^2=D_{1,2}\varphi^1\quad \text{on}\;\Omega^1\cap\Sigma^2.
\end{equation}
From \eqref{eq:comp_quarter}, we deduce similarly
\[
\varphi^1=U^2(\varphi^2)\quad \text{on}\;\Omega^2\cap\Sigma^1,
\]
another relation linking $\varphi^1$ and $\varphi^2$. Repeating this with $1$ and $2$ replaced, respectively, by $j$ and $j+1$, we get $2\mathcal{N}$ equations linking the $\mathcal{N}$ traces. Let us introduce, for all $j\in \Z/\mathcal{N}\Z$, the operators $D_{j,j\pm 1}:H^{1/2}(\Sigma^j)\rightarrow H^{1/2}(\Omega^j\cap\Sigma^{j\pm 1})$ defined by
\begin{equation}\label{eq:DtD_jjplus1}
D_{j,j\pm1}\,\psi:=U^j(\psi)\big|_{\Omega^j\cap\Sigma^{j\pm1}},\quad\psi \in H^{1/2}(\Sigma^j).
\end{equation}
Then the compatibility relations between all the traces can be written as:
\begin{equation}\label{eq:syst_comp}
	\forall j\in \Z/\mathcal{N}\Z,\quad\begin{array}{|lcl}
		\varphi^j=D_{j-1,j}\,\varphi^{j-1},\quad \text{on}\;\Omega^{j-1}\cap\Sigma^j,\\
		\varphi^j=D_{j+1,j}\,\varphi^{j+1},\quad \text{on}\;\Omega^{j+1}\cap\Sigma^j.
	\end{array}
\end{equation}
This system of equations has to be completed with the Dirichlet boundary condition, rewritten as
\begin{equation}
	\label{eq:syst_comp_BC}
	\varphi^j=g|_{\Gamma^j}\quad \text{on}\;\Gamma^j,\quad j=1,...,\NO.
\end{equation}
Note that for all $j$ the operators $D_{j,j\pm1}$ can be given explicitly by using the representation \eqref{eq:hprPhi} for $U^j$. They are integral operators and more precisely double-layer potential operators (in the sense, e.g., of \cite{CoKr:98} or \cite{Ch:84}). 

One can easily  check that \eqref{eq:syst_comp}-\eqref{eq:syst_comp_BC} is equivalent to the original problem \eqref{pb:probleme_Dir_diss} and so is well-posed. We recall here the proof of this result because its general idea will be used in the results  proved later in the paper.
\begin{theorem}
\label{the:equivalenceH1/2}	
 In the case where $\Re(k)>0$ and  $\Im(k)>0$, there exists, for each $g\in H^{1/2}(\Gamma)$, a unique solution $(\varphi^1,..., \varphi^\NO)\in H^{1/2}(\Sigma^1)\times\ldots\times H^{1/2}(\Sigma^{\NO})$  of
\eqref{eq:syst_comp}-\eqref{eq:syst_comp_BC}.
\end{theorem}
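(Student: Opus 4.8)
The plan is to prove that the system \eqref{eq:syst_comp}--\eqref{eq:syst_comp_BC} is equivalent to the Dirichlet problem \eqref{pb:probleme_Dir_diss}, which is known to have a unique solution $u\in H^1(\Omega)$; well-posedness of the system then follows. Existence comes for free by construction: given $g$, let $u\in H^1(\Omega)$ be the unique solution of \eqref{pb:probleme_Dir_diss} and set $\varphi^j:=u|_{\Sigma^j}\in H^{1/2}(\Sigma^j)$. Since $u|_{\Omega^j}\in H^1(\Omega^j)$ solves the Helmholtz equation in $\Omega^j$ with trace $\varphi^j$ on $\Sigma^j$, and since the half-plane problem \eqref{eq:half-space} has a unique solution when $\Im(k)\neq 0$, we get $u|_{\Omega^j}=U^j(\varphi^j)$, i.e.\ \eqref{eq:def_traces2}. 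The compatibility relations \eqref{eq:syst_comp} then follow by restricting \eqref{eq:def_traces2} (for indices $j-1$ and $j+1$) to $\Sigma^j$, and \eqref{eq:syst_comp_BC} holds because $u=g$ on $\Gamma$.

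The real work is uniqueness, which by linearity reduces to showing that the homogeneous system ($g=0$) has only the trivial solution. Given such a solution $(\varphi^1,\dots,\varphi^{\NO})$, set $u_j:=U^j(\varphi^j)\in H^1(\Omega^j)$; the aim is to glue the $u_j$ into a single $u\in H^1(\Omega)$ solving \eqref{pb:probleme_Dir_diss} with $g=0$, whence $u\equiv0$ by uniqueness and then $\varphi^j=u_j|_{\Sigma^j}=0$. The key gluing step is that consecutive pieces agree on their overlap: $u_j=u_{j+1}$ in $\Omega^j\cap\Omega^{j+1}$ for every $j\in\Z/\NO\Z$. To see this, put $w:=u_j-u_{j+1}\in H^1(\Omega^j\cap\Omega^{j+1})$. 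The boundary of the (unbounded, convex) wedge $\Omega^j\cap\Omega^{j+1}$ consists of the two half-lines $\Omega^{j+1}\cap\Sigma^j$ and $\Omega^j\cap\Sigma^{j+1}$; on the first, $u_j=\varphi^j=D_{j+1,j}\varphi^{j+1}=u_{j+1}$ by the second relation in \eqref{eq:syst_comp}, and on the second, $u_{j+1}=\varphi^{j+1}=D_{j,j+1}\varphi^j=u_j$ by the first, so $w$ has zero Dirichlet trace on $\partial(\Omega^j\cap\Omega^{j+1})$. Since $w$ solves the Helmholtz equation in the wedge, Green's first identity over the truncated wedge $(\Omega^j\cap\Omega^{j+1})\cap B_R$ gives $\int(|\nabla w|^2-k^2|w|^2)=\oint_{\partial}\bar w\,\partial_n w$; the contributions of the two half-lines vanish (zero trace) and, because $w\in H^1$ of the whole wedge, the arc contribution tends to $0$ along a sequence $R_n\to\infty$. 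Taking imaginary parts and using $\Im(k^2)=2\,\Re(k)\,\Im(k)>0$ forces $w=0$.

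It remains to pass from consecutive agreement to a single-valued $u$. Recall that $\Sigma^j$ is exactly the line carrying the edge $\Gamma^j$, so that $\Omega^j$ is the open half-plane on the outer side of that line and $\bx\in\Omega^j\iff\langle\bx,n_j\rangle>a^j$, where $a^j$ is the value of the support function of $\O$ in the direction of the outward normal $n_j$. Hence, for $\bx\in\Omega$, the index set $S(\bx):=\{j:\bx\in\Omega^j\}$ consists of those edges whose (extended) supporting line strictly separates $\bx$ from $\O$; since $\O$ is convex, the directions strictly separating the exterior point $\bx$ from $\O$ form an arc of $S^1$, and as the $n_j$ are in cyclic angular order this makes $S(\bx)$ a set of consecutive indices in $\Z/\NO\Z$. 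Therefore any two indices $i,j\in S(\bx)$ are joined by a chain of consecutive indices all lying in $S(\bx)$, and the consecutive-agreement identity (evaluated at $\bx$, which lies in each overlap along the chain) gives $u_i(\bx)=u_j(\bx)$. Thus $u(\bx):=u_j(\bx)$ for any $j\in S(\bx)$ is well defined. Locally $u$ coincides with a single $u_j$, so it is a Helmholtz solution throughout $\Omega$ (in particular $H^1_{\mathrm{loc}}$), and $\|u\|_{H^1(\Omega)}\le\sum_j\|u_j\|_{H^1(\Omega^j)}<\infty$ since the $\Omega^j$ cover $\Omega$; moreover $u=\varphi^j=0$ on each $\Gamma^j\subset\Sigma^j$ by \eqref{eq:syst_comp_BC}. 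Consequently $u\in H^1(\Omega)$ solves \eqref{pb:probleme_Dir_diss} with $g=0$, so $u\equiv0$, and then $U^j(\varphi^j)=u|_{\Omega^j}=0$ yields $\varphi^j=0$ for all $j$.

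The main obstacle is the consecutive-agreement step: it is a uniqueness statement for the Helmholtz equation on an \emph{unbounded} wedge with homogeneous Dirichlet data, so one cannot simply invoke compactness or a bounded-domain energy argument. The two ingredients that make it work are the dissipativity $\Im(k^2)>0$ (which turns the energy identity into an $L^2$ bound on $w$) and the control of the boundary term at infinity afforded by $w\in H^1$ of the full wedge; both rely essentially on $\Im(k)\neq0$, and it is precisely this step that must be reworked, via the radiation condition \eqref{eq:traceRC}, in the non-dissipative case treated later.
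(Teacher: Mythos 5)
Your proof is correct and follows the same strategy as the paper: existence by construction from the unique solution of \eqref{pb:probleme_Dir_diss}, and uniqueness by showing that the compatibility conditions force $U^j(\varphi^j)=U^{j+1}(\varphi^{j+1})$ on each overlap $\Omega^j\cap\Omega^{j+1}$ (the paper's Proposition \ref{prop:equivalenceH1/2}, which invokes uniqueness of the Dirichlet problem in the unbounded wedge for $\Im(k)>0$) and then gluing the half-plane representations into the solution of the boundary value problem. The only difference is that you write out the details the paper treats as known or ``straightforward'' --- the Green's-identity/imaginary-part argument for uniqueness in the unbounded wedge using $w\in H^1$ to kill the arc contribution, and the support-function/convexity argument showing that the set of half-planes containing a given exterior point is a consecutive block, so that pairwise agreement of neighbours suffices for the gluing --- and both are carried out correctly.
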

Existence was proven by construction of \eqref{eq:syst_comp}-\eqref{eq:syst_comp_BC} from the solution $u$ of \eqref{pb:probleme_Dir_diss}, with $\varphi^j:=u|_{\Sigma^j}$ for $j=1,...,\NO$. Uniqueness follows from the following proposition since \eqref{pb:probleme_Dir_diss} is well-posed.
\begin{proposition}
\label{prop:equivalenceH1/2}	
Suppose $\Re(k)>0$ and $\Im(k)>0$ and let $(\varphi^1,..., \varphi^\NO)$ $\in$ $H^{1/2}(\Sigma^1)$ $\times\ldots\times H^{1/2}(\Sigma^{\NO})$ be a solution to \eqref{eq:syst_comp}-\eqref{eq:syst_comp_BC} (with $g\in H^{1/2}(\Gamma)$). Then, for all $j\in \Z/\mathcal{N}\Z$,
\begin{equation}
	\label{eq:compatibility}
U^j(\varphi^j)=U^{j+1}(\varphi^{j+1})\quad \text{in}\; \Omega^j\cap\Omega^{j+1},
\end{equation}
and the function  defined by
$	u=U^j(\varphi^j)$ in $\Omega^j$, $j=1,...,\NO$,
is the unique solution $u\in H^1(\Omega)$ of \eqref{pb:probleme_Dir_diss}.
\end{proposition}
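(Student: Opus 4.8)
The plan is to prove the pairwise compatibility \eqref{eq:compatibility} by a uniqueness argument in each overlapping sector, and then to glue the half-plane representations into a single function which, via the well-posedness of \eqref{pb:probleme_Dir_diss}, must be its solution. Fix $j\in\Z/\NO\Z$ and set $w:=U^j(\varphi^j)-U^{j+1}(\varphi^{j+1})$ on the overlap $Q:=\Omega^j\cap\Omega^{j+1}$. Since $\Sigma^j$ and $\Sigma^{j+1}$ carry the adjacent edges $\Gamma^j,\Gamma^{j+1}$, their boundaries are not parallel and $Q$ is an infinite angular sector whose boundary is the union of the two rays $\Omega^{j+1}\cap\Sigma^j$ and $\Omega^j\cap\Sigma^{j+1}$. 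By \eqref{eq:half-space} both $U^j(\varphi^j)$ and $U^{j+1}(\varphi^{j+1})$ lie in $H^1(Q)$ and satisfy $-\Delta\,\cdot\,-k^2\,\cdot=0$ there, hence so does $w$. The key observation is that $w$ has vanishing Dirichlet trace on all of $\partial Q$: on $\Omega^{j+1}\cap\Sigma^j$ one has $U^j(\varphi^j)=\varphi^j$, while the second relation of \eqref{eq:syst_comp} gives $U^{j+1}(\varphi^{j+1})=D_{j+1,j}\varphi^{j+1}=\varphi^j$; symmetrically, on $\Omega^j\cap\Sigma^{j+1}$ one has $U^{j+1}(\varphi^{j+1})=\varphi^{j+1}$, while the first relation of \eqref{eq:syst_comp}, with $j$ replaced by $j+1$, gives $U^j(\varphi^j)=D_{j,j+1}\varphi^j=\varphi^{j+1}$. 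Thus $w\in H^1(Q)$ solves the homogeneous Dirichlet problem for $-\Delta-k^2$ on the sector $Q$.

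The main step, and the point deserving the most care, is then to conclude $w\equiv 0$. Because $\Im(k)>0$, the half-plane solutions decay exponentially at infinity, so $w$ and $\nabla w$ are in $L^2(Q)$ and integrating $-\Delta w-k^2w=0$ against $\overline{w}$ over $Q$ is legitimate, the boundary term over $\partial Q$ vanishing since $w=0$ there and the contribution from infinity vanishing by the decay. This yields $\int_Q\bigl(|\nabla w|^2-k^2|w|^2\bigr)=0$, and taking imaginary parts gives $\Im(k^2)\int_Q|w|^2=0$. Since $\Im(k^2)=2\Re(k)\Im(k)>0$, we obtain $w=0$ in $Q$, which is exactly \eqref{eq:compatibility}. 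The only technical subtlety here is justifying the integration by parts in the unbounded sector and the absence of a boundary term at infinity, but this is made routine by the exponential decay available in the dissipative regime.

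Finally I would assemble the global solution. As the half-planes cover $\Omega$, setting $u(\bsx):=U^j(\varphi^j)(\bsx)$ for $\bsx\in\Omega^j$ defines $u$ on all of $\Omega$, and it is single-valued because, for a convex polygon, the half-planes containing a given point form a block of consecutive indices, along which the identities \eqref{eq:compatibility} chain together to force agreement of all the relevant $U^m(\varphi^m)$ at that point. Each piece $U^j(\varphi^j)$ belongs to $H^1(\Omega^j)$ and the matching of traces across the interfaces $\Sigma^j$ precludes any jump, so $u\in H^1(\Omega)$; it satisfies $-\Delta u-k^2u=0$ in $\Omega$ since it does so in each $\Omega^j$; and $u=g$ on $\Gamma$ because near the interior of each edge $\Gamma^j\subset\Sigma^j$ one has $u=U^j(\varphi^j)$, whose trace is $\varphi^j|_{\Gamma^j}=g|_{\Gamma^j}$ by \eqref{eq:syst_comp_BC}. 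Hence $u\in H^1(\Omega)$ solves \eqref{pb:probleme_Dir_diss}, and by the well-posedness of that problem it is the unique such solution, which establishes the uniqueness asserted for \eqref{eq:syst_comp}--\eqref{eq:syst_comp_BC}.
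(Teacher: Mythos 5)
Your proposal is correct and follows essentially the same route as the paper: form the difference $v=U^j(\varphi^j)-U^{j+1}(\varphi^{j+1})$ on the overlap, observe it is an $H^1$ solution of the homogeneous Dirichlet problem for the Helmholtz equation there, and invoke uniqueness (which, since $\Im(k)>0$, follows from the energy identity you spell out) before gluing the representations into the solution of \eqref{pb:probleme_Dir_diss}. The only difference is that you make explicit the Green's-identity argument that the paper leaves implicit in the phrase ``by uniqueness of the solution of the Dirichlet problem.''
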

\begin{proof}
To prove \eqref{eq:compatibility}, let us set $v=U^j(\varphi^j)-U^{j+1}(\varphi^{j+1})$. By definition of the half-plane representations $U^j$ and $U^{j+1}$, it is clear that $v\in H^1(\Omega^j\cap\Omega^{j+1})$ and that $v$ satisfies $\Delta v+k^2v=0$ in $\Omega^j\cap\Omega^{j+1}$. Moreover, the compatibility conditions \eqref{eq:syst_comp} imply that  $v$ vanishes on the boundary of $\Omega^j\cap\Omega^{j+1}$. Using that $\Im(k)\neq 0$, one deduces that $v=0$ by uniqueness of the solution of the Dirichlet problem in $\Omega^j\cap\Omega^{j+1}$. The rest of the proof is straightforward.
\end{proof}
It has been shown in previous papers \cite{BB-Fliss-Tonnoir-2018,Bon-Fli-Tja-2019} that, for both mathematical analysis and computation, it is more convenient to consider the Half-Space Matching formulation in an $L^2$-framework, which means that $\varphi^j$ is sought in $L^2(\Sigma^j)$ instead of $H^{1/2}(\Sigma^j)$ so that the formulation is
\begin{equation}\label{eq:HSMM_complexe}
	\begin{array}{c}
		\text{Find}\; (\varphi^1,...,\varphi^\NO) \in L^2(\Sigma^1)\times\ldots\times L^2(\Sigma^{\NO})\;\text{such that for }j\in \Z/\mathcal{N}\Z\\[5pt]
		\begin{array}{|ll}
			\varphi^j=D_{j-1,j}\,\varphi^{j-1},& \text{on}\;\Sigma^j\cap\Omega^{j-1},\\
			\varphi^j=g|_{\Gamma^j},& \text{on}\;\Gamma^j,\\
			\varphi^j=D_{j+1,j}\,\varphi^{j+1},& \text{on}\;\Sigma^j\cap\Omega^{j+1}.
		\end{array}
	\end{array}
\end{equation}
Let us emphasize that this makes sense since $D_{j,j\pm 1}\psi$ is well-defined by \eqref{eq:DtD_jjplus1} and \eqref{eq:hpr} for all $\psi\in L^2(\Sigma^j)$ and the operators $D_{j,j\pm 1}$ are in fact continuous from $L^2(\Sigma^j)$ to $L^2(\Omega^j\cap\Sigma^{j\pm 1})$ \cite{BB-Fliss-Tonnoir-2018}.
It has been shown in \cite{BB-Fliss-Tonnoir-2018} that problem  \eqref{eq:HSMM_complexe} is of Fredholm type. More precisely, by rewriting it in an operator form, it is proven that the associated operator is Fredholm of index 0. This means that existence of a solution is equivalent to its uniqueness. We refer the reader to \cite{BB-Fliss-Tonnoir-2018} for more detail. Here, we focus on the question of uniqueness. We need to prove that, for a solution of \eqref{eq:HSMM_complexe}, $g=0$ implies $\varphi^j=0$ for all $j=1,...,\NO$. The sketch of the proof of this uniqueness result, which is much less straightforward than in the $H^{1/2}$ framework, has been given in \cite{BB-Fliss-Tonnoir-2018}. We give a detailed presentation of this proof in the following paragraph, in a form that will be directly used for our main result, which is uniqueness for the case of a real wavenumber $k$.

\subsection{The uniqueness result in an $L^2$-framework} \label{sec:uniquenessD}

The difficulty in proving a uniqueness result for problem \eqref{eq:HSMM_complexe} comes from the fact that the half-plane solutions $U^j(\varphi^j)$ (with  $U^j(\varphi^j)$ defined now by \eqref{eq:hprGreen}, \eqref{eq:hprPhi}, or \eqref{eq:hpr}) are in general not in $H^1(\Omega^j)$, assuming only that $\varphi^j\in L^2(\Sigma^j)$.
But we will take advantage of the following lemma which summarizes key properties of the half-plane solutions with $L^2$ and/or { $L^\infty$ Dirichlet boundary data.}
\begin{lemma}
	\label{lem:UjinL2}
	Suppose that $\Im(k)\geq 0,\Re(k)>0$. Let $\phi\in L^2(\R)+ L^\infty(\R)$
	and define $u(\bsx)$, for $\bsx=(x_1,x_2)\in \R^2_+:=(0,\infty)\times \R$, by
	\begin{equation} \label{eq:hplem-1}
		u(\bsx) := \int_\R \mathcal{H}(k;x_1,x_2-t)\phi(t)\, \rd t,
	\end{equation}
where $\mathcal{H}$ is defined by \eqref{eq:hpr_kernel}.
Then  the integral is well-defined as a Lebesgue integral for $\bsx\in\R^2_+$,  $u\in {C}^\infty(\R^2_+)$, and $\Delta u + k^2\, u = 0 $ in $\R^2_+$. Moreover, if $\phi$ is continuous on an open interval $I\subset \R$, then $u\in C(\R^2_+\cup \gamma)$ where $\gamma:=\{(0,t);\;t\in I\}$, and $u(0,t)=\phi(t),\;t\in I$. Finally, if $\phi\in L^2(\R)$ and  $\Im(k)> 0$, then
	$u\in L^2(\R^2_+)$ and there exists a constant $C_\infty>0$ independent of $\phi$ such that
	\begin{equation} \label{eq:uboundL2}
		\|u\|_{L^2(\R^2_+)} \leq C_\infty\|\phi\|_{L^2(\R)},
	\end{equation}
	whereas if $\Im(k)= 0$ then, for any $L>0$, $u\in L^2(\omega_L)$ where $\omega_L:=\{\bsx\in\R^2_+;\;x_1<L\}$, and there exists a constant $C_L>0$ independent of $\phi$ such that
	\begin{equation} \label{eq:uboundL2_rek}
		\|u\|_{L^2(\omega_L)} \leq C_L \|\phi\|_{L^2(\R)}.
	\end{equation}
\end{lemma}
\begin{proof}
	From asymptotics of the Hankel function $H^{(1)}_1$ for large and small argument, it follows that, for every $k$ with $\Im(k)\geq 0$, $\Re(k)> 0$, there exists a $C>0$ such that
	\begin{equation}
	\label{eq:Hbound}
	|\mathcal{H}(k;x_1,x_2)|\leq C\frac{x_1}{|\bsx|^2}(1+|\bsx|^{1/2})e^{- \Im(k)|\bsx|}.	
	\end{equation}
	It follows that, for $\bsx\in\R^2_+$, the integral in \eqref{eq:hplem-1} is well-defined as a Lebesgue integral for any $\phi\in L^2(\R)+ L^\infty(\R)$.
To see that $u\in {C}^\infty(\R^2_+)$ and satisfies the Helmholtz equation, it suffices to argue as in \cite[Theorem 3.2]{CW-Impedance-1997}. If $\phi$ is continuous on an open interval $I\subset \R$, to see that $u\in C(\R^2_+\cup \gamma)$ and that $u(0,t)=\phi(t)$ for $t\in I$, it is enough to show that $u\in C(\R^2_+\cup \tilde \gamma)$  and that $u(0,t)=\phi(t)$ for $t\in \tilde{I}$ for every compact $\tilde\gamma:=\{(0,t);\;t\in \tilde{I}\}\subset\gamma$. So suppose $\tilde{I}\subset I$ is compact and write $\phi=\phi_1+\phi_2$ where the support of $\phi_2$ does not intersect  $\tilde{I}$ and $\phi_1$ is bounded and continuous. Correspondingly, we can split $u$ as $u=u_1+u_2$, where $u_j$ is defined by \eqref{eq:hplem-1} with $\phi$ replaced by $\phi_j$. It follows from the definition that $u_2\in {C}(\R^2_+\cup\tilde\gamma)$  with $u_2(0,t)=0$ for $t\in \tilde{I}$, while $u_1\in {C}(\overline{\R^2_+})$ and $u_1(0,t)=\phi_1(t)$ for $t\in \tilde{I}$ by \cite[Theorem 3.1]{HalfPlaneRep}, so that $u(0,t)=\phi(t)$ for $t\in \tilde{I}$.
Concerning the $L^2$-estimates \eqref{eq:uboundL2} and \eqref{eq:uboundL2_rek} let us recall that for $\psi_1\in L^1(\R)$ and $\psi_2\in L^2(\R)$ the convolution product $\psi_1*\psi_2$ belongs to $L^2(\R)$, with the estimate (Young's convolution inequality)
$$\|\psi_1*\psi_2\|_{L^2(\R)}\leq \|\psi_1\|_{L^1(\R)}\|\psi_2\|_{L^2(\R)}.$$
For a given $x_1>0$, this implies that
$$\int_{\R}|u(x_1,x_2)|^2dx_2\leq \left[\int_{\R}|\mathcal{H}(k;x_1,x_2)|dx_2\right]^2\|\phi\|_{L^2(\R)}^2.$$
From \eqref{eq:Hbound}, we deduce that, for some $C'>0$,
$$\int_{\R}|\mathcal{H}(k;x_1,x_2)|dx_2\leq C'(1+x_1)^{1/2}e^{- \Im(k)x_1},   \quad  x_1>0.$$
One  easily obtains the estimates \eqref{eq:uboundL2} for $\Im(k)> 0$ and \eqref{eq:uboundL2_rek} for $\Im(k)= 0$.
\end{proof}

Now to prove the uniqueness result, we need some results concerning $L^2$ solutions of the homogeneous Dirichlet problem in a domain with corners. We gather in the next lemma the results that we need which are proved in 	\cite[Chapter 2]{Grisvard:92} (see Theorem 2.3.3 and the proof of Theorem 2.3.7).
\begin{lemma}
	\label{lem-singularities}
	Let $Q$ be a bounded polygonal domain with $N$ vertices denoted by $S_1,\ldots, S_N$. Let us suppose that $Q$ has $M<N$ reentrant vertices that can be supposed to be $S_1,\ldots, S_M$ without loss of generality. Let $P:=\{ S_1,\ldots, S_N\}$ and suppose that $w\in  {C}(\bar{Q}\setminus P)\cap {C}^2(Q)$ satisfies $\Delta w=0$ in $Q$ and $w=0$ on $\partial Q\setminus P$. Then, if $w\in L^2(Q)$, there exists $\tilde{w}\in H^1(Q)$ and $M$ complex constants $c_1,\ldots,c_M$ such that
	\[
	w(\bsx)=\tilde{w}(\bsx)+\sum_{m=1}^M c_m\,r_m^{-\pi/\alpha_m}\sin(\pi\theta_m/\alpha_m), \quad \bsx \in Q,	\]
	where, for each $m$, $\alpha_m$ is the interior angle of $Q$ at $S_m$ and $(r_m,\theta_m)$ are the polar coordinates of $\bsx$ whose origin is $S_m$ such that $\theta_m$ is the angle from one of the sides of $\partial Q$ containing $S_m$.
	In particular if $M=0$ ($Q$ is convex) then $w=0$.
\end{lemma}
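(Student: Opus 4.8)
The plan is to localise the whole problem at the vertices, since the only possible loss of regularity occurs there, and then to carry out a separation-of-variables computation in polar coordinates at each corner. First I would observe that $w$ is smooth up to the boundary away from $P$: each edge of $\partial Q$ is a straight segment on which $w$ vanishes, so odd (Schwarz) reflection across the open edge extends $w$ to a harmonic function in a full neighbourhood of any interior point of that edge. Hence $w\in C^\infty(\overline{Q}\setminus P)$ and, in particular, $w$ is locally $H^1$ away from $P$. Thus the decomposition reduces to understanding $w$ in a small sector $\{0<r_m<r_0,\ 0<\theta_m<\alpha_m\}$ about each vertex $S_m$.

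Next, fixing a vertex $S=S_m$ with interior angle $\alpha=\alpha_m$ and local polar coordinates $(r,\theta)$, $\theta\in(0,\alpha)$, I would use that $w$ is harmonic and vanishes on the two sides $\theta=0$ and $\theta=\alpha$ to expand it in the angular Dirichlet eigenfunctions,
\[
w(r,\theta)=\sum_{k\ge 1}a_k(r)\,\sin\!\Big(\tfrac{k\pi\theta}{\alpha}\Big),\qquad a_k(r)=\frac{2}{\alpha}\int_0^\alpha w(r,\theta)\,\sin\!\Big(\tfrac{k\pi\theta}{\alpha}\Big)\,\rd\theta.
\]
Writing $\Delta w=0$ in polar form and using orthogonality of the $\sin(k\pi\theta/\alpha)$, each coefficient solves the Euler equation $r^2a_k''+ra_k'-(k\pi/\alpha)^2a_k=0$, so
\[
a_k(r)=A_k\,r^{k\pi/\alpha}+B_k\,r^{-k\pi/\alpha}
\]
for constants $A_k,B_k$ (the interior real-analyticity of $w$ justifying the differentiation under the integral sign and the passage to the ODE).

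The $L^2$ hypothesis is then imposed through Parseval in the angular variable: $\int_0^{r_0}\!\!\int_0^\alpha|w|^2\,r\,\rd\theta\,\rd r$ is a fixed multiple of $\sum_k\int_0^{r_0}|a_k(r)|^2\,r\,\rd r$, and the term $B_k r^{-k\pi/\alpha}$ is square-integrable against $r\,\rd r$ near $0$ precisely when $k\pi/\alpha<1$. Hence $B_k=0$ whenever $k\ge\alpha/\pi$. At a convex vertex ($\alpha<\pi$) this annihilates every $B_k$, so near $S$ the function $w$ is a sum of terms $r^{k\pi/\alpha}\sin(k\pi\theta/\alpha)$ with strictly positive exponents and is locally $H^1$. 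At a reentrant vertex ($\pi<\alpha<2\pi$) only $k=1$ satisfies $k\pi/\alpha<1$, so $B_k=0$ for $k\ge 2$ and the single surviving singular contribution is $c_m\,r^{-\pi/\alpha}\sin(\pi\theta/\alpha)$ with $c_m:=B_1$, which lies in $L^2$ but not $H^1$; this is exactly the function in the statement.

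Finally I would globalise by a partition of unity, choosing cut-offs $\chi_m$ equal to $1$ near $S_m$ and supported away from the other vertices, and setting
\[
\tilde{w}:=w-\sum_{m=1}^M c_m\,\chi_m\,r_m^{-\pi/\alpha_m}\sin(\pi\theta_m/\alpha_m).
\]
Near each reentrant corner the leading singularity is removed, leaving only positive-exponent terms, so $\tilde{w}$ is $H^1$ there; it is $H^1$ elsewhere by the reflection step, whence $\tilde{w}\in H^1(Q)$, and absorbing the smooth pieces $(\chi_m-1)\,r_m^{-\pi/\alpha_m}\sin(\pi\theta_m/\alpha_m)$ into $\tilde{w}$ gives the stated decomposition. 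When $M=0$ there are no singular terms, so $w=\tilde{w}\in H^1(Q)$ is harmonic with zero trace, and the energy identity $\int_Q|\nabla w|^2\,\rd\bsx=0$ forces $w\equiv 0$. I expect the main obstacle to be the rigorous justification of the angular expansion and its term-by-term reduction to the Euler ODE, namely the uniform convergence and differentiability of $\sum_k a_k(r)\sin(k\pi\theta/\alpha)$ on compact subsets of the punctured sector together with the Parseval identity used to extract the $L^2$ constraint; this is precisely the Kondratiev--Grisvard corner analysis behind the cited Theorems 2.3.3 and 2.3.7.
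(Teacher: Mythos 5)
The paper offers no proof of this lemma at all: it is quoted directly from Grisvard \cite[Chapter 2]{Grisvard:92} (Theorem 2.3.3 and the proof of Theorem 2.3.7), so there is nothing internal to compare against. Your argument is a correct self-contained reconstruction of the standard corner-singularity analysis that underlies the cited result: odd reflection to get smoothness up to the open edges, angular sine expansion at each vertex, reduction to the Euler ODE $r^2a_k''+ra_k'-(k\pi/\alpha)^2a_k=0$, elimination of the exponents $-k\pi/\alpha\le -1$ by the $L^2$ hypothesis via Parseval, and a partition of unity to globalise. Two points deserve to be made explicit. First, after removing the singular term you assert that the remaining series $\sum_k A_k r^{k\pi/\alpha}\sin(k\pi\theta/\alpha)$ is $H^1$ near the vertex; this is not automatic from positivity of the exponents alone, since $\|\nabla(A_kr^{\mu_k}\sin\mu_k\theta)\|^2_{L^2}$ carries an extra factor $\mu_k\sim k$. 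The standard fix is to estimate the gradient norms on the sector of radius $r_0/2$ against the $L^2$ norms on the annulus $r_0/2<r<r_0$, where the ratio $\mu_k(1/2)^{2\mu_k}\to 0$ kills the growth; this is the interior-estimate step that makes $\tilde w\in H^1$ near each corner. Second, for the global (uncut) singular function $r_m^{-\pi/\alpha_m}\sin(\pi\theta_m/\alpha_m)$ to lie in $L^2(Q)$ and be smooth on $\bar Q$ away from $S_m$ one must fix the branch of $\theta_m$ so that its cut avoids $Q$ near $S_m$; your absorption of $(\chi_m-1)r_m^{-\pi/\alpha_m}\sin(\pi\theta_m/\alpha_m)$ into $\tilde w$ then goes through. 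With these two details supplied, the proof is complete and matches the argument in the reference the authors invoke.
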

The previous lemma will be used to prove that particular $L^2$ solutions of the Helmholtz equations in unbounded domains with Dirichlet boundary conditions are in fact $H^1$ in every bounded subset of the domain. To prove that they are $H^1$ in the whole domain, we will resort to the following result, whose proof follows the arguments of \cite[Lemma 2.2]{Spence2014}.
\begin{lemma}
	\label{lem-L2givesH1}
Let $\Pi\subset \R^2$ be an unbounded  open set such that $\Pi_R:=\{\bsx\in\Pi; |\bsx|<R+1\}$ is a Lipschitz domain for all sufficiently large $R>0$, and $v\in L^2(\Pi)$ a function such that $v_{|B}\in H^1(B)$ for all bounded open subsets $B$ of $\Pi$. If $v$ satisfies the Helmholtz equation $\Delta v+k^2v=0$ in $ \Pi$ and its trace is zero on $\partial\Pi$, then  $v\in H^1(\Pi)$.
\end{lemma}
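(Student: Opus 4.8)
The plan is to reduce the claim to a single uniform-in-$R$ energy estimate. Since $v\in L^2(\Pi)$ by hypothesis and $v|_B\in H^1(B)$ for every bounded open $B\subset\Pi$, in order to conclude that $v\in H^1(\Pi)$ it suffices to prove that $\nabla v\in L^2(\Pi)$, i.e.\ to bound the Dirichlet energy $\int_{\Pi}|\nabla v|^2$. I would do this by a Caccioppoli- (or Rellich-) type argument, following \cite[Lemma 2.2]{Spence2014}: multiply the Helmholtz equation by $\overline v$ cut off near infinity, integrate by parts, and absorb the resulting gradient term, obtaining a bound on the truncated energy that is independent of the cut-off radius.

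Concretely, fix a smooth non-increasing $\eta\colon\R\to[0,1]$ with $\eta\equiv1$ on $(-\infty,0]$ and $\eta\equiv0$ on $[1,\infty)$, and set $\chi_R(\bsx):=\eta(|\bsx|-R)$, so that $\chi_R\equiv1$ on $\{|\bsx|\le R\}$, $\supp\chi_R\subset\{|\bsx|\le R+1\}$, and $|\nabla\chi_R|\le C_0:=\|\eta'\|_{L^\infty(\R)}$ uniformly in $R$. For $R$ large enough that $\Pi_R$ is Lipschitz we have $v|_{\Pi_R}\in H^1(\Pi_R)$ and $\Delta v=-k^2 v\in L^2(\Pi_R)$, so $v$ possesses a normal derivative in $H^{-1/2}(\partial\Pi_R)$ and Green's first identity holds when $v$ is tested against $w:=\chi_R^2\,\overline v\in H^1(\Pi_R)$. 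The trace of $w$ vanishes on all of $\partial\Pi_R$: on $\partial\Pi\cap\partial\Pi_R$ because the trace of $v$ is zero there by assumption, and on the remaining part, which lies on $\{|\bsx|=R+1\}$, because $\chi_R=0$ there. Hence the boundary pairing $\langle\partial_n v,\,w\rangle_{\partial\Pi_R}$ disappears and Green's identity reads
\[
\int_{\Pi}\chi_R^2\,|\nabla v|^2 = k^2\int_{\Pi}\chi_R^2\,|v|^2 - 2\int_{\Pi}\chi_R\,\overline v\,(\nabla v\cdot\nabla\chi_R).
\]

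Taking real parts (the left-hand side being real) and estimating the last term by Cauchy--Schwarz followed by Young's inequality $2ab\le\frac12 a^2+2b^2$ with $a=\chi_R|\nabla v|$ and $b=|v|\,|\nabla\chi_R|$, I obtain
\[
\int_{\Pi}\chi_R^2\,|\nabla v|^2 \le |k^2|\int_{\Pi}|v|^2 + \frac12\int_{\Pi}\chi_R^2\,|\nabla v|^2 + 2C_0^2\int_{\Pi}|v|^2,
\]
and absorbing the gradient term on the right gives $\int_{\Pi}\chi_R^2|\nabla v|^2\le 2(|k^2|+2C_0^2)\|v\|_{L^2(\Pi)}^2$, a bound uniform in $R$. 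Since $0\le\chi_R^2\nearrow1$ pointwise as $R\to\infty$, the monotone convergence theorem yields $\|\nabla v\|_{L^2(\Pi)}^2\le 2(|k^2|+2C_0^2)\|v\|_{L^2(\Pi)}^2<\infty$, whence $v\in H^1(\Pi)$.

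The step I expect to require the most care is the rigorous justification of Green's identity on $\Pi_R$ and the vanishing of its boundary term: $v$ is assumed only to be locally $H^1$ and $\partial\Pi$ may have corners, so one must invoke the trace and normal-derivative theory for the Lipschitz domain $\Pi_R$ (using $v|_{\Pi_R}\in H^1(\Pi_R)$ together with $\Delta v\in L^2(\Pi_R)$) to make sense of $\langle\partial_n v,\,w\rangle_{\partial\Pi_R}$ and to see that it vanishes because $w$ has zero trace. Everything else---the choice of cut-off, the absorption, and the passage to the limit---is routine.
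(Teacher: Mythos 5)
Your proof is correct and follows essentially the same route as the paper: a Caccioppoli-type argument testing the equation against a cut-off multiple of $\overline v$, justifying Green's identity on the Lipschitz domain $\Pi_R$ via the vanishing trace, and obtaining an $R$-uniform bound on the truncated Dirichlet energy. The only (cosmetic) difference is that you use the squared cut-off $\chi_R^2$ and absorb via Young's inequality, whereas the paper builds the same effect into a single cut-off satisfying $|\nabla\chi_R|\le c\sqrt{\chi_R}$ and closes with a quadratic inequality in $\bigl(\int_\Pi|\nabla v|^2\chi_R\bigr)^{1/2}$.
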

\begin{proof}
 Let us define the truncation function $\chi_R(\bsx):=F_R(|\bsx|)$ where $F_R\in C^\infty(\R^+)$ is chosen such that  $0\leq F_R\leq 1$, $F_R=1$ on $[0,R]$, $F_R=0$ on $[R+1,+\infty)$ and there exists a constant $c>0$ independent of $R$ such that $|F_R'|\leq c\sqrt{F_R}$ (this can be achieved by defining $F_R$ so that $F_R>0$ on $(R,R+1)$ and vanishes quadratically at $R+1$). One has for all $R>0$
	\[
	\int_{\Pi}(\Delta v+k^2v)\overline{v}\chi_R\,d\bsx=0.
	\]
	Since $v\in H^1(\Pi_R;\Delta):= \left\{u\in H^1(\Pi_R);\Delta u\in L^2(\Pi_R)\right\}$
we can apply Green's first identity in $\Pi_R$. Using that $v$ vanishes on $\partial \Pi$, this yields
	\[
	\int_{\Pi}|\nabla v|^2\chi_R\,d\bsx\leq k^2\|v\|^2_{L^2(\Pi)}+\left|\int_{\Pi}\overline{v}\nabla v\cdot\nabla\chi_R\,d\bsx\right|.
	\]
	Since $|\nabla\chi_R|\leq c\sqrt{\chi_R}$, applying { the} Cauchy-Schwarz inequality gives
	\[
	\int_{\Pi}|\nabla v|^2\chi_R\,d\bsx\leq k^2\|v\|^2_{L^2(\Pi)}+c\|v\|_{L^2(\Pi)}\sqrt{\int_{\Pi}|\nabla v|^2\chi_R\,d\bsx},
	\]
	from which we deduce the existence of a constant $C>0$ independent of $R$ such that
	\[
	\int_{\Pi}|\nabla v|^2\chi_R\,d\bsx\leq C\|v\|^2_{L^2(\Pi)}.
	\]
This being true for all $R$, we can conclude  that $v\in H^1(\Pi)$.
\end{proof}
We are now able to prove the following well-posedness result.
\begin{theorem}
	\label{thm:uniqueness-kcomplex}	
If $\Re(k)>0$ and $\Im(k)>0$ then there exists, for each $g\in H^{1/2}(\Gamma)$, a unique solution $(\varphi^1,..., \varphi^\NO)\in L^2(\Sigma^1)\times\ldots\times L^2(\Sigma^{\NO})$  of
\eqref{eq:HSMM_complexe}.
\end{theorem}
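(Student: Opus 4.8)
The plan is to prove well-posedness by reducing it to uniqueness. Since \eqref{eq:HSMM_complexe} has been recast in \cite{BB-Fliss-Tonnoir-2018} as an operator equation that is Fredholm of index $0$, existence and uniqueness are equivalent; and existence holds by construction, because the traces on the $\Sigma^j$ of the unique $H^1(\Omega)$ solution of \eqref{pb:probleme_Dir_diss} lie in $L^2(\Sigma^j)$ (they even lie in $H^{1/2}$, as $u$ decays exponentially when $\Im(k)>0$) and satisfy \eqref{eq:HSMM_complexe}. So it suffices to show that if $g=0$ and $(\varphi^1,\dots,\varphi^{\NO})\in L^2(\Sigma^1)\times\cdots\times L^2(\Sigma^{\NO})$ solves \eqref{eq:HSMM_complexe}, then each $\varphi^j=0$. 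Writing $u^j:=U^j(\varphi^j)$, Lemma~\ref{lem:UjinL2} (in the case $\Im(k)>0$) gives $u^j\in L^2(\Omega^j)\cap C^\infty(\Omega^j)$, that $u^j$ solves the Helmholtz equation in $\Omega^j$, and that $u^j$ is continuous up to, and attains $\varphi^j$ on, any open subinterval of $\Sigma^j$ where $\varphi^j$ is continuous.

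The first main step is to establish the compatibility identity $u^j=u^{j+1}$ in the overlap $Q_j:=\Omega^j\cap\Omega^{j+1}$ for each $j\in\Z/\NO\Z$. Since $0<\Theta^{j,j+1}<\pi$, the set $Q_j$ is an infinite \emph{convex} wedge, and the two compatibility relations in \eqref{eq:HSMM_complexe} say exactly that $v:=u^j-u^{j+1}$ has vanishing trace on $\partial Q_j$; moreover $v\in L^2(Q_j)$ and $v$ solves the Helmholtz equation in $Q_j$. To pass from $L^2$ to $H^1$, I would first show $v\in H^1(B)$ for every bounded open $B\subset Q_j$: away from the wedge vertex this is interior and flat-boundary elliptic regularity, while near the vertex one writes $\Delta v=-k^2 v\in L^2$, subtracts an $H^2$ volume potential to reduce to a harmonic function, and applies Lemma~\ref{lem-singularities}; as the wedge is convex ($M=0$), no singular term occurs and $v$ is locally $H^1$. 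Lemma~\ref{lem-L2givesH1} (with $\Pi=Q_j$) then upgrades this to $v\in H^1(Q_j)$, and a standard energy identity — test the equation against $\overline v$ using the cut-off $\chi_R$ of Lemma~\ref{lem-L2givesH1}, let $R\to\infty$, take imaginary parts — yields $\Im(k^2)\,\|v\|_{L^2(Q_j)}^2=0$. Since $\Im(k^2)=2\Re(k)\Im(k)>0$, we get $v=0$, i.e. $u^j=u^{j+1}$ in $Q_j$.

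The compatibility identities let us define $u\in L^2(\Omega)$ unambiguously by $u:=u^j$ on $\Omega^j$; it solves the Helmholtz equation in $\Omega$, and $u=0$ on $\Gamma$, because on each $\Gamma^j\subset\Sigma^j$ the trace of $u^j$ equals $\varphi^j=g|_{\Gamma^j}=0$. To conclude $u=0$ (whence $\varphi^j=u^j|_{\Sigma^j}=0$), I would again run the two-lemma argument, now with $\Pi=\Omega$, to prove $u\in H^1(\Omega)$, and then invoke well-posedness of \eqref{pb:probleme_Dir_diss}. The delicate point is local $H^1$-regularity of $u$ near the vertices of $\O$, which are \emph{reentrant} corners of $\Omega$ (interior angle $\alpha\in(\pi,2\pi)$), where Lemma~\ref{lem-singularities} a priori permits a strongly singular term $c\,r^{-\pi/\alpha}\sin(\pi\theta/\alpha)\notin H^1$. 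The crux is that for these particular solutions the coefficient $c$ must vanish: near such a vertex $S$ the function $u$ coincides with the half-plane solutions $u^j$ and $u^{j+1}$, whose data is continuous and vanishes at $S$, so by Lemma~\ref{lem:UjinL2} (together with the kernel bound \eqref{eq:Hbound}) $u$ is continuous, hence bounded, near $S$; since the only admissible singular function $r^{-\pi/\alpha}$ is unbounded, $c=0$, and $u$ is locally $H^1$ at each vertex. With interior and flat-boundary regularity elsewhere, $u\in H^1(B)$ for all bounded $B\subset\Omega$, and Lemma~\ref{lem-L2givesH1} then gives $u\in H^1(\Omega)$; well-posedness of \eqref{pb:probleme_Dir_diss} forces $u=0$ and hence $\varphi^j=0$.

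I expect this last regularity issue at the reentrant corners — ruling out the $r^{-\pi/\alpha}$ singularity of the glued solution $u$ built from $L^2$ traces — to be the main obstacle, since it is precisely where the $L^2$ framework departs from the $H^{1/2}$ framework of Proposition~\ref{prop:equivalenceH1/2} (and where the continuity of $u$ at the shared vertices must be argued, essentially simultaneously, for all adjacent half-plane solutions). The remaining ingredients — the energy identity, the convex-wedge compatibility step, and the final appeal to well-posedness of \eqref{pb:probleme_Dir_diss} — are then routine.
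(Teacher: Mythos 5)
Your overall architecture (Fredholm index $0$ reduces well-posedness to uniqueness; compatibility in the convex overlaps; glue to get $u$; upgrade $L^2$ to $H^1$ via Lemma~\ref{lem-singularities} and Lemma~\ref{lem-L2givesH1}; conclude by well-posedness of \eqref{pb:probleme_Dir_diss}) is exactly the paper's, and your Step 1 is sound — the paper solves an auxiliary Dirichlet problem on a truncated convex polygon rather than subtracting a volume potential, and cites uniqueness of the Dirichlet problem in the wedge rather than taking imaginary parts in an energy identity, but these are cosmetic differences.

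There is, however, a genuine gap at precisely the point you identify as the crux: ruling out the singular term $c\,r^{-\pi/\alpha}\sin(\pi\theta/\alpha)$ at the reentrant vertices. You argue that $u$ is continuous, hence bounded, near such a vertex $S$ because ``the data is continuous and vanishes at $S$''. Neither claim is available. Lemma~\ref{lem:UjinL2} gives continuity of $U^j(\varphi^j)$ up to the boundary only over open intervals on which $\varphi^j$ is continuous; the compatibility relations give continuity of $\varphi^j$ on $\Sigma^j\setminus\overline{\Gamma^j}$, i.e.\ away from the \emph{closed} edge, so nothing is known at the vertex itself, where a priori $\varphi^j$ is only locally $L^2$ and may be unbounded. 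Nor does $L^2$ boundary data yield a bounded half-plane solution near the boundary: by Cauchy--Schwarz and \eqref{eq:Hbound} one only gets $|U^j(\varphi^j)(\bsx)|\lesssim x_1^{-1/2}\|\varphi^j\|_{L^2}$ as $x_1\to 0$, consistent with the $L^2$ (not $L^\infty$) conclusion of Lemma~\ref{lem:UjinL2}. Indeed the singular candidate $r^{-\pi/\alpha}\sin(\pi\theta/\alpha)$ is itself an $L^2$ Helmholtz-type perturbation with boundary trace vanishing on $\partial\mathcal{O}$ near $S$, so boundedness is exactly what cannot be assumed. The correct mechanism (the paper's) is different: the singular function does \emph{not} vanish on the ray $\Sigma^j\cap\Omega^{j+1}$ emanating from $S_j$ into $\Omega'$, and since $w=\varphi^j-\tilde u$ on that ray must lie in $L^2$ near $S_j$ while $r^{-\pi/\alpha}$ with $\pi/\alpha>1/2$ is not square-integrable at $r=0$, the coefficient $c_j$ must vanish. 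In other words, the hypothesis that kills the corner singularity is the $L^2$ membership of the \emph{trace} $\varphi^j$ on the part of $\Sigma^j$ interior to the neighbouring half-plane, not any pointwise control of $u$ at the vertex. Your proof needs to be repaired by replacing the boundedness argument with this trace-integrability argument.
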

This is a direct consequence of the following proposition and Theorem \ref{the:equivalenceH1/2}.
\begin{proposition}\label{prop:trraceL2}
	Suppose $\Re(k)>0$ and $\Im(k)>0$ and let $(\varphi^1,..., \varphi^\NO)$ $\in$ $L^2(\Sigma^1)$ $\times\ldots\times L^2(\Sigma^{\NO})$ be a solution of \eqref{eq:HSMM_complexe} (with $g\in H^{1/2}(\Gamma)$). Then $\varphi^j\in H^{1/2}(\Sigma^j)$, $j=1,...,\NO$.
\end{proposition}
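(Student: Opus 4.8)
The plan is to upgrade regularity by comparing with the genuine solution and reconstructing a \emph{homogeneous} field to which the corner lemmas apply. Since \eqref{pb:probleme_Dir_diss} is well-posed, fix its solution $u^*\in H^1(\Omega)$; its traces $\psi^j:=u^*|_{\Sigma^j}$ lie in $H^{1/2}(\Sigma^j)$ (because $u^*|_{\Omega^j}\in H^1(\Omega^j)$ and the trace operator is bounded), and since $U^j(\psi^j)$ is the unique $H^1(\Omega^j)$ solution of \eqref{eq:half-space} we have $u^*|_{\Omega^j}=U^j(\psi^j)$, so $(\psi^1,\dots,\psi^\NO)$ solves \eqref{eq:HSMM_complexe}. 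Setting $\delta^j:=\varphi^j-\psi^j\in L^2(\Sigma^j)$, linearity of the operators $D_{j,j\pm1}$ shows $(\delta^1,\dots,\delta^\NO)$ solves \eqref{eq:HSMM_complexe} with $g=0$; it therefore suffices to prove $\delta^j\in H^{1/2}(\Sigma^j)$, for then $\varphi^j=\psi^j+\delta^j\in H^{1/2}(\Sigma^j)$. I would then set $w^j:=U^j(\delta^j)$, so that by Lemma \ref{lem:UjinL2} each $w^j\in L^2(\Omega^j)\cap C^\infty(\Omega^j)$ solves the Helmholtz equation and is continuous up to every part of $\Sigma^j$ on which $\delta^j$ is continuous, with boundary value $\delta^j$ there. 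Here $\delta^j=0$ on $\Gamma^j$, while on $\Sigma^j\cap\Omega^{j-1}$ and $\Sigma^j\cap\Omega^{j+1}$ the homogeneous compatibility relations identify $\delta^j$ with the smooth restrictions to $\Sigma^j$ of $U^{j-1}(\delta^{j-1})$ and $U^{j+1}(\delta^{j+1})$; thus $\delta^j$ is continuous on $\Sigma^j$ except possibly at the two vertices of $\O$ lying on $\Sigma^j$.

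The next step is the compatibility identity $w^j=w^{j+1}$ in $\Omega^j\cap\Omega^{j+1}$, which lets me glue the $w^j$ into a single $w\in L^2(\Omega)$. The difference $v:=w^j-w^{j+1}$ is $L^2$, solves Helmholtz in the wedge $\Omega^j\cap\Omega^{j+1}$, and, by the boundary values just described together with the compatibility relations, is continuous up to and vanishes on the two rays bounding the wedge. Because $0<\Theta^{j,j+1}<\pi$ the wedge is convex, so its only corner is non-reentrant; applying Lemma \ref{lem-singularities} near that corner (where $M=0$, hence no singular term) and interior elliptic regularity up to the straight zero-data boundary away from it gives $v\in H^1$ on every bounded subset of the wedge, and Lemma \ref{lem-L2givesH1} then yields $v\in H^1(\Omega^j\cap\Omega^{j+1})$. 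Green's first identity combined with $\Im(k)>0$ now forces $v=0$. Hence $w:=w^j$ on $\Omega^j$ is well defined on all of $\Omega$, lies in $L^2(\Omega)$, solves Helmholtz, and vanishes on $\Gamma$ away from the vertices.

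It remains to prove $w\in H^1(\Omega)$; the trace of $w|_{\Omega^j}=U^j(\delta^j)$ on $\Sigma^j$ is then $\delta^j\in H^{1/2}(\Sigma^j)$, as required. On a bounded subdomain, $w\in L^2$ solves $\Delta w=-k^2w\in L^2$ with zero Dirichlet data on $\Gamma$, and the only corners are the vertices of $\O$, which are reentrant for the exterior domain (interior angle $\alpha_m\in(\pi,2\pi)$). By Lemma \ref{lem-singularities}, $w=\tilde w+\sum_m c_m\,r_m^{-\pi/\alpha_m}\sin(\pi\theta_m/\alpha_m)$ with $\tilde w\in H^1$, and the crux is to show each $c_m=0$. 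For this I would use that \eqref{eq:hpr} exhibits $w$ near a vertex $S_m$ as the action of the Poisson-type kernel \eqref{eq:hpr_kernel} on the locally \emph{bounded} density $\delta^j$: the estimate \eqref{eq:Hbound} shows $\int_\R|\mathcal{H}(k;x_1,x_2-t)|\,\rd t$ is bounded uniformly for $x_1>0$, so $w=U^j(\delta^j)$ is bounded in a neighbourhood of $S_m$ in $\Omega$. Since $r_m^{-\pi/\alpha_m}$ is unbounded as $r_m\to0$, this forces $c_m=0$, so $w\in H^1$ on every bounded subdomain; Lemma \ref{lem-L2givesH1} applied with $\Pi=\Omega$ then upgrades this to $w\in H^1(\Omega)$, completing the proof.

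I expect the genuinely delicate point to be showing $c_m=0$ at the reentrant vertices. The field is only known a priori to be $L^2$, and $L^2$ membership does \emph{not} exclude the singular mode $r_m^{-\pi/\alpha_m}$ (which is square-integrable precisely because $\alpha_m>\pi$), so the explicit boundedness coming from the half-plane kernel bound \eqref{eq:Hbound} is essential: it is exactly the extra information the $L^2$ framework otherwise lacks, and it is what distinguishes this argument from the easy $H^{1/2}$ case treated in Proposition \ref{prop:equivalenceH1/2}.
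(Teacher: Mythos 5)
Your overall architecture --- subtract the traces of the exact solution to reduce to the homogeneous system, prove the compatibility identity in each convex wedge using Lemmas \ref{lem-singularities} and \ref{lem-L2givesH1}, glue, and then kill the corner singularities at the reentrant vertices --- is exactly the paper's. The gap is at the point you yourself flag as the crux. Your argument that $c_m=0$ rests on the claim that the density $\delta^j$ is \emph{locally bounded} near the vertex $S_m$, so that the uniform bound on $\int_\R|\mathcal{H}(k;x_1,x_2-t)|\,\rd t$ makes $w=U^j(\delta^j)$ bounded in a neighbourhood of $S_m$. Nothing in the hypotheses gives you that. A priori $\delta^j\in L^2(\Sigma^j)$, and the compatibility relations plus Lemma \ref{lem:UjinL2} yield continuity of $\delta^j$ only on $\Sigma^j$ \emph{minus} the two vertices: on the ray $\Sigma^j\cap\Omega^{j+1}$ the function $\delta^j$ coincides with $U^{j+1}(\delta^{j+1})$, and the vertex $S_j$ lies \emph{on} the boundary $\Sigma^{j+1}$ of that half-plane, where $\delta^{j+1}$ is itself only known to be in $L^2$; the Poisson-type integral of an $L^2$ density is in general unbounded as one approaches a boundary point (e.g.\ data behaving like $|t-t_0|^{-1/4}$). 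Trying to bootstrap boundedness of $\delta^j$ at $S_j$ from boundedness of $\delta^{j+1}$ at $S_j$ is circular. Moreover, even granting boundedness of $w$ near $S_m$, you would still owe an argument that a bounded function cannot equal $\tilde w + c_m\,r_m^{-\pi/\alpha_m}\sin(\pi\theta_m/\alpha_m)$ with $c_m\neq 0$, since $\tilde w\in H^1$ need not be bounded in 2D; that step itself reduces to a trace comparison.

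The paper closes this gap using only the one-dimensional $L^2$ information that is actually in the hypotheses, together with a geometric observation: the line $\Sigma^j$ continues past the reentrant vertex $S_j$ into the \emph{interior} of the polygon $\Omega'=\mathcal{O}'\setminus\mathcal{O}$ along the ray $\Sigma^j\cap\Omega^{j+1}$, and on that ray the angular factor $\sin(\pi\theta_j/\alpha_j)$ does \emph{not} vanish. On that ray $w=\varphi^j-\tilde u$ with $\varphi^j\in L^2(\Sigma^j)$ and $\tilde u$ continuous up to the closure, so $w$ has an $L^2$ trace there, as does $\tilde w\in H^1(\Omega')$; but $r_j^{-\pi/\alpha_j}$ restricted to a ray through the vertex is not square-integrable near $r_j=0$, because $\alpha_j<2\pi$ forces $\pi/\alpha_j>1/2$ (the relevant integrability threshold in one dimension is $1/2$, not the exponent $1$ that makes the singular mode harmless in $L^2$ of the two-dimensional domain). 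Hence $c_j=0$. If you replace your boundedness claim by this one-dimensional trace comparison, the rest of your proof goes through.
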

\begin{proof} The proof consists of two steps.
	\begin{enumerate}
		\item Prove the relations \eqref{eq:compatibility} as in Proposition \ref{prop:equivalenceH1/2}	but noting that here the functions $(\varphi^1,..., \varphi^\NO)$ that solve \eqref{eq:HSMM_complexe} are a priori supposed to be only in $L^2$.
		\item  By noting that the traces on $\Sigma^1,\ldots,\Sigma^\NO$ of the unique solution of \eqref{pb:probleme_Dir_diss} form a solution of \eqref{eq:HSMM_complexe} (and are obviously in $H^{1/2}$), it suffices to prove that, if $g=0$, $\varphi^j=0$ for $j=1,...,\NO$. To prove this uniqueness result, we show that, in the case that $(\varphi^1,..., \varphi^\NO)$ satisfies \eqref{eq:HSMM_complexe} with $g=0$, the function  defined (thanks to step 1) by
$	u=U^j(\varphi^j)$ in $\Omega^j$ ($U^j(\varphi^j)$ defined by \eqref{eq:hprPhi})  is in $H^1(\Omega)$, with zero trace. Therefore it is equal to 0 everywhere, as the unique solution of \eqref{pb:probleme_Dir_diss} for $g=0$. We deduce finally that each $\varphi^j$, which we show is the trace of $u$ on $\Sigma^j$, is zero.
	\end{enumerate}

\vspace{2mm}

Step 1.	As in the proof of Proposition \ref{prop:equivalenceH1/2}, we introduce $v=U^j(\varphi^j)-U^{j+1}(\varphi^{j+1})$, where the $\varphi^j\in L^2(\Sigma^j)$ satisfy \eqref{eq:HSMM_complexe}, in particular satisfy \eqref{eq:syst_comp}. A priori, we just know, thanks to Lemma \ref{lem:UjinL2}, that $v\in L^2(\Omega^j\cap\Omega^{j+1})\cap C^\infty(\Omega^j\cap\Omega^{j+1})$ and that $v$ satisfies $\Delta v+k^2v=0$ in $\Omega^j\cap\Omega^{j+1}$. Note that it follows from  \eqref{eq:syst_comp} and Lemma \ref{lem:UjinL2} that the $\varphi^j$'s are in fact continuous on $\Sigma^j\setminus \overline{\Gamma^j}$, so that, from Lemma \ref{lem:UjinL2}, $U^j(\varphi^j)\in C(\overline{\Omega^j}\setminus \overline{\Gamma^j})$ and $U^j(\varphi^j)=\varphi^j$ on ${\Sigma^j\setminus\overline{\Gamma^j}}$. As a consequence, $v$ is continuous in $\overline{\Omega^j\cap \Omega^{j+1}}$, except maybe at $S_{j}$, the intersection of $\Sigma^j$ and  $\Sigma^{j+1}$, and, thanks to \eqref{eq:syst_comp}, $v$ vanishes on $\partial(\Omega^j\cap \Omega^{j+1})\setminus \{S_{j}\}$. It follows by standard reflection and elliptic regularity arguments that $v\in C^\infty \left(\overline{\Omega^j\cap \Omega^{j+1}}\setminus \{S_{j}\}\right)$.
	
 To conclude that $v=0$, we only need to prove that $v\in H^1(\Omega^j\cap\Omega^{j+1})$. Firstly, we show that $v$ is $H^1$ in all bounded subdomains of $\Omega^j\cap\Omega^{j+1}$. Let us introduce a bounded convex polygon $\mathcal{O}'$ such that $\overline{\mathcal{O}}\subset \mathcal{O}'$ (see Figure \ref{fig:illustration-theorem}). Then $Q:=\mathcal{O}'\cap \Omega^j\cap\Omega^{j+1}$ is a convex polygon. Let us   consider the following Dirichlet problem: find $\tilde{v}\in H^1(Q)$ such that
	$$\begin{array}{|ll}
		-\Delta \tilde{v}=k^2v&\mbox{in }Q,\\
		\tilde{v}=v&\mbox{on }\partial Q.
	\end{array}$$
	This problem has  a unique solution as $v\in L^2(Q)$ and the restriction of $v$ to $\partial Q$ is in $H^{1/2}(\partial Q)\cap C(\partial Q)$, since $v\in C^\infty(\overline Q\setminus \{S_j\})$ and,  if we set $v=0$ at $S_j$,  $v|_{\partial Q}$ vanishes in a neighbourhood of $S_j$.  Further, by standard elliptic regularity arguments,  $\tilde{v}\in C^2(Q)$ and (see \cite{Medkova2018}, Corollary 7.11.7) $\tilde{v}\in C(\bar{Q})$.	
 One can finally apply Lemma \ref{lem-singularities} to the function $w=v-\tilde{v}\in C(\bar{Q}\setminus \{S_{j}\})\cap C^2(Q)\cap L^2(Q)$. Since $Q$ is convex, we conclude that $w=0$, and consequently that $v|_{Q}=\tilde{v}\in H^1(Q)$. As $v\in C^\infty \left(\overline{\Omega^j\cap \Omega^{j+1}}\setminus \{S_{j}\}\right)$, we deduce that $v$ is $H^1$ in all open bounded subdomains of $\Omega^j\cap\Omega^{j+1}$. Since also $v\in L^2(\Omega^j\cap\Omega^{j+1})$, satisfies the Helmholtz equation in $ \Omega^j\cap\Omega^{j+1}$ and vanishes on $\partial(\Omega^j\cap \Omega^{j+1})$, Lemma \ref{lem-L2givesH1} implies that $v\in H^1(\Omega^j\cap\Omega^{j+1})$. Thus $v=0$ so that \eqref{eq:compatibility} holds.

\vspace{2mm}

Step 2. The second step consists in proving that, if $g=0$, the function   defined by
$	u=U^j(\varphi^j)$ in $\Omega^j$ is equal to 0 everywhere, and deduce from this that each $\varphi^j=0$. For $j=1,...,\NO$, as above let $S_j:=\Sigma^j\cap \Sigma^{j+1}$ so that   $P:=\{S_1, S_2,...,S_\mathcal{N}\}$ is the set of corners of $\partial \Omega$ (i.e. the set of vertices of the polygon $\mathcal O$). Since, for each $j$,  $\varphi^j$ is such that $\varphi^j=g=0$ on $\Gamma^j$, and $\varphi^j\in C(\Sigma^j\setminus P)$ (see Step 1), it follows from Lemma \ref{lem:UjinL2} that $u\in C^\infty(\Omega^j)\cap L^2(\Omega^j)\cap C(\overline{\Omega^j}\setminus P)${, $u = \varphi^j$ on $\Sigma^j\setminus P$, in particular} $u={\varphi^j=}0$ on $\Gamma^j$, and $\Delta u+k^2u=0$ in $\Omega^j$. Consequently $u\in C^\infty(\Omega)\cap L^2(\Omega)\cap C(\overline{\Omega}\setminus P)$, $\Delta u+k^2u=0$ in $\Omega$, and $u=0$ on $\partial \Omega\setminus P$.

Again, to conclude that $u=0$, we only need to prove that $u\in H^1(\Omega)$. We proceed as in the first step of the proof. Introducing the bounded polygon $\Omega':=\mathcal{O}'\setminus \mathcal{O}$,
we denote by $\tilde{u}$ the unique solution in  $H^1(\Omega')\cap C(\overline{\Omega'})\cap C^2(\Omega')$ of the  Dirichlet problem
$$\begin{array}{|ll}
	-\Delta \tilde{u}=k^2u&\mbox{in }\Omega',\\
	\tilde{u}=u&\mbox{on }\partial \Omega'.
\end{array}$$
Then we apply Lemma \ref{lem-singularities} to the function $w=u-\tilde{u}\in C(\overline{\Omega'}\setminus P)\cap C^2(\Omega')\cap L^2(\Omega')$. But{, in contrast} to the {argument in the } first step, the polygon $\Omega'$ is not convex and has $\mathcal{N}$ reentrant corners at the vertices $S_1$, $S_2$, ..., $S_\mathcal{N}$. Consequently, we deduce from Lemma \ref{lem-singularities}  that there exists a function $\tilde{w}\in H^1(\Omega')$ and $\mathcal{N}$ complex coefficients $c_1,..., c_\mathcal{N}$ such that
	\[w=\tilde{w}+\sum_{j=1}^\mathcal{N} c_j\,r_j^{-\pi/\alpha_j}\sin(\pi\theta_j/\alpha_j)\quad\mbox{ in }\Omega',\]
where, as explained in the lemma, $\alpha_j$ is the interior angle of $\Omega'$ at $S_j$ and $(r_j,\theta_j)$ are  polar coordinates centered at $S_j$. The definition of $\theta_j$ is such that $ \sin(\pi\theta_j/\alpha_j)$ vanishes on $\Sigma^j\cap\partial\mathcal{O}$ and $\Sigma^{j+1}\cap\partial\mathcal{O}$ but note that it does not vanish on $\Sigma^j\cap\Omega^{j+1}$. Since $\varphi^{j}\in L^2(\Sigma^{j})$, $\tilde{u}\in C(\overline{\Omega'})$, and  $w=u-\tilde{u}=\varphi^{j}-\tilde{u}$ on $\Sigma^{j}\cap \Omega'$, we must have $w\in L^2(\Sigma^{j}\cap \Omega')$, which is possible only if $c_j=0$. Indeed, since $\alpha_j<2\pi$, we have $\pi/\alpha_j>1/2$ so that $r_j^{-\pi/\alpha_j}$ is not square integrable near $r_j=0$. Summing up, we have $w=\tilde{w}$ and $u=\tilde{u}+\tilde{w}\in H^1(\Omega')$.
Since $u \in {C}^\infty(\Omega)$, one concludes that $u$ is $H^1$ in all open bounded subdomains of $\Omega$,  and finally, thanks to Lemma \ref{lem-L2givesH1}, that $u\in H^1(\Omega)$.
We have shown that $u\in H^1(\Omega)$ and $u$ satisfies \eqref{pb:probleme_Dir_diss} with $g=0$. Thus $u=0$ and, since $\varphi^j=u$ on $\Sigma^j\setminus P$, we have that $\varphi^j=0$ for all $j$, which ends the proof.

 \end{proof}
\begin{figure}[ht]
	\centering
		\begin{tikzpicture}[scale=0.5]
			\fill[rotate=-30,color=gray!20] (-3,-1)--(3.,-1)--(0.,4.2)--cycle;
			\draw[rotate=-30] (-4.5,0) -- (4.5,0);
			\draw[rotate=-30] ({sqrt(3)+1},-3) -- ({1-5/3*sqrt(3)}, 5);
			\draw[rotate=-30] ({-sqrt(3)-1},-3) -- ({5/3*sqrt(3)-1}, 5);
			\draw[rotate=-30,densely dotted] (0,{sqrt(3)+0.15}) -- (.6,{1.6*sqrt(3)+0.15});
			\draw[rotate=-30,densely dotted] (0,{sqrt(3)+0.15}) -- (-.6,{1.6*sqrt(3)+0.15});
			\draw[rotate=-30,densely dotted]  (.6,{1.6*sqrt(3)+0.15})--(0,4.05);
			\draw[rotate=-30,densely dotted] (-.6,{1.6*sqrt(3)+0.15})--(0,4.05);
			\draw[rotate=-30,densely dotted] (0,{sqrt(3)-0.15}) -- (-.85,{.15*sqrt(3)-0.15});
			\draw[rotate=-30,densely dotted] (0,{sqrt(3)-0.15}) -- (.85,{.15*sqrt(3)-0.15});
				\draw[rotate=-30,densely dotted] (-.85,{.15*sqrt(3)-0.15}) -- (.85,{.15*sqrt(3)-0.15});
			\node at (0.3, 0.5) {$\mathcal{O}$};
					\node at (1.4, 2.4) {$Q$};
			\node at (1.3, -3.5) {$\Sigma^1$};
			\node at (-3.5, -0.5) {$\Sigma^2$};
			\draw (3,4.5) node  {${\Omega^1}\cap{\Omega^2}$};
		\end{tikzpicture}
	\caption{Illustration for the proof of Theorem \ref{thm:uniqueness-kcomplex} in  the case $j=1$ and $\NO=3$. The convex polygon  $\mathcal{O}'$ is indicated in gray and  the boundaries of $\mathcal{O}$ and $Q$ with   dotted contours.}
	\label{fig:illustration-theorem}
\end{figure}
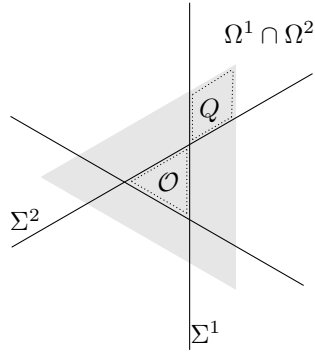
\section{The HSM method for real wavenumber}\label{sec:HSM_real}
\label{section-HSMrealfreq}
\subsection{The Half-Space Matching formulation}\label{sub-HSMrealfreqformulation}
In this section we consider the Dirichlet problem of the previous section, but now with a real wavenumber  $ k>0$.  We seek $u\in H^1_{\mathrm{loc}}(\Omega)$ 
such that
\begin{equation}\label{pb:probleme_Dir_real}
	\begin{cases}
			-\Delta u -  k^2\,u  = 0 \; \text{in} \; \Omega,\\[4pt]
			u=g\;\text{on}\;\Gamma,
			\end{cases}
\end{equation}
for a given $g\in H^{1/2}(\Gamma)$, and such that  $u$ satisfies the Sommerfeld radiation condition \eqref{eq:Sommerfeld}, which it is convenient to rewrite as  
%
	\begin{equation} \label{eq:Sommerfeld-sup}
	\lim_{R\to +\infty} R^{1/2}\sup \left\{\left|\frac{\partial u(\bsx)}{\partial r} - \ri k u(\bsx)\right|; |\bsx| = R\right\} =0.
\end{equation}
It is well known that this problem has a unique solution.

Let us now give a Half-Space Matching formulation of problem (\ref{pb:probleme_Dir_real}-\ref{eq:Sommerfeld-sup}).
If we set as before $\varphi^j:=u\big|_{\Sigma^j}$ for $j=1,...,\NO$, one can easily check by a Green's identity argument that the half-plane representations \eqref{eq:def_traces2}, with $U^j(\varphi^j)$ given by \eqref{eq:hprGreen} (equivalently \eqref{eq:hprPhi} or \eqref{eq:hpr}), still hold.
%
A main difference with the case of a complex wavenumber is that now the solution of (\ref{pb:probleme_Dir_real}-\ref{eq:Sommerfeld-sup}) decays only slowly, like $r^{-1/2}$ as $r\to \infty$, so that we cannot expect that $\varphi^j\in L^2(\Sigma^j)$. However $\varphi^j\in L^2(\Sigma^j)+L^\infty(\Sigma^j)$ so that, for $x^j_1>a^j$, the integral
$$  \int_{\R}\mathcal{H}(k;x_1^j-a^j,x_2^j-y_2^j)\,\varphi^j(a^j,y_2^j)\, \rd y_2^j$$
is still well-defined by Lemma \ref{lem:UjinL2}. 
Then one can check, exactly as in the case of a complex wavenumber, that the $\varphi^j$'s satisfy the following Half-Space Matching equations:
\begin{equation}\label{eq:HSMM_real}
	\begin{array}{c}
		\begin{array}{|ll}
			\varphi^j=D_{j-1,j}\,\varphi^{j-1},& \text{on}\;\Sigma^j\cap\Omega^{j-1},\\
			\varphi^j=g|_{\Gamma^j},& \text{on}\;\Gamma^j,\\
			\varphi^j=D_{j+1,j}\,\varphi^{j+1},& \text{on}\;\Sigma^j\cap\Omega^{j+1},
		\end{array}\quad j\in\Z/\mathcal{N}\Z
	\end{array}
\end{equation}
where
\begin{equation}\label{eq:def_DtD_real} D_{j,j\pm1}\,\varphi^{j}:=U^j(\varphi^{j})\big|_{\Omega^j\cap\Sigma^{j\pm 1}},
	\end{equation}
with $U^j(\varphi^j)$ given by \eqref{eq:hprGreen}.
The idea is again, for numerical purposes, to replace problem (\ref{pb:probleme_Dir_real}-\ref{eq:Sommerfeld-sup}) by the system of equations \eqref{eq:HSMM_real}. But  in contrast to the case with $\Im(k)>0$, it is not clear in which space the functions $\varphi^j$'s must be sought  to make \eqref{eq:HSMM_real} uniquely solvable. In what follows, we will provide a framework which ensures existence and uniqueness of the solution of \eqref{eq:HSMM_real}. More precisely, in this framework,  the unique solution of \eqref{eq:HSMM_real} is given by  $\varphi^j:=u\big|_{\Sigma^j}$, where $u$ is the unique solution of  (\ref{pb:probleme_Dir_real}-\ref{eq:Sommerfeld-sup}).

Let us first recall{, as just remarked above,} that $U^j(\varphi)$ is well-defined when $\varphi\in L^2(\Sigma^j)+L^\infty(\Sigma^j)$ by Lemma \ref{lem:UjinL2}, and{, indeed,} we will require that the solution of \eqref{eq:HSMM_real} satisfies $\varphi^j\in L^2(\Sigma^j)+L^\infty(\Sigma^j)$ for $j=1,\ldots,\NO$.
But, as discussed in \S\ref{sec:main results}, our uniqueness result requires an additional property:
analogously to requiring that $u\in H^1_{\mathrm{loc}}(\Omega)$ satisfies the Sommerfeld radiation condition when $k$ is real,   we will require, for $j= 1,...,\NO$, that $\varphi^j$    has the following asymptotics playing the role of a radiation condition:
\begin{equation} \label{eq:RadCond}
\varphi^j(a^j,t) = \left\{\begin{array}{cc} c_+^j\,\re^{\ri k |t|}|t|^{-1/2}\left(1 + O(|t|^{-1})\right), & \mbox{as} \quad t\to +\infty,\\
	c_-^j\,\re^{\ri k |t|}|t|^{-1/2}\left(1 + O(|t|^{-1})\right), & \mbox{as} \quad t\to -\infty,\end{array}\right.
\end{equation}
for some constants $c_\pm^j\in\C$. This obviously holds when $\varphi^j:=u\big|_{\Sigma^j}$, where $u$ is the unique solution of  (\ref{pb:probleme_Dir_real}-\ref{eq:Sommerfeld-sup}), since{, as discussed in \S\ref{sec:main results},  $u$ satisfies \eqref{eq:ffp}.}

The main result of the present paper is Theorem \ref{thm:HSMMUnique} stating that the radiation condition \eqref{eq:RadCond} is strong enough to ensure uniqueness for \eqref{eq:HSMM_real} for $k$ real. We will prove it by following the same steps as in the case of a complex wavenumber, proving first the compatibility of half-plane representations  in the overlaps between half-planes, and then uniqueness for the reconstructed solution in the exterior domain $\Omega$.
To establish these results, we will use classical uniqueness results for the Helmholtz equation {on unbounded domains when the solution satisfies} the Sommerfeld radiation condition at infinity. A preliminary question is: can we prove, using  \eqref{eq:RadCond},  that the half-plane representations \eqref{eq:hpr} satisfy a Sommerfeld condition at infinity?  We will establish, in the next subsection, a slightly weaker result which will be sufficient for the proof of Theorem \ref{thm:HSMMUnique}.

\subsection{Properties of the half-plane solution} \label{sec:half-plane}


We consider in this subsection  the half-space $\R^2_+:=(0,\infty)\times \R$.
Let us prove a preliminary lemma for the solution of the Dirichlet problem in $\R^2_+$ given by \eqref{eq:hplem-1}, in the case that the Dirichlet data decays sufficiently rapidly at infinity.
\begin{lem}
	\label{lem-decayingphi}
	Suppose that $\phi\in L^\infty(\R)$ satisfies, for  some $p>1$,
	\begin{equation} \label{eq:phipsi}
		|\phi(t)| \leq (1+|t|)^{-p}, \quad t\in \R,
	\end{equation}
	and define $u(\bsx)$, for $\bsx\in \R^2_+$, by
	\begin{equation} \label{eq:hplem}
		u(\bsx) := \int_\R \mathcal{H}(k;x_1,x_2-t)\phi(t)\, \rd t.
	\end{equation}
	Then there exists a constant $C(p)>0$  such that
	\begin{equation} \label{eq:ubound}
		|u(\bsx)| \leq C(p) (1+|\bsx|)^{-1/2}, \quad \bsx\in \R^2_+.
	\end{equation}
\end{lem}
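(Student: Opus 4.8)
The plan is to estimate the integral \eqref{eq:hplem} directly from the pointwise kernel bound \eqref{eq:Hbound}. Writing $r:=|\bsx|$ and $\rho(t):=\sqrt{x_1^2+(x_2-t)^2}=|\bsx-(0,t)|$, and recalling $\Im(k)\ge 0$ so that $e^{-\Im(k)\rho(t)}\le 1$, \eqref{eq:Hbound} yields
\[
|\mathcal{H}(k;x_1,x_2-t)|\le C\left(\frac{x_1}{\rho(t)^{2}}+\frac{x_1}{\rho(t)^{3/2}}\right),\qquad t\in\R .
\]
First I would dispose of the bounded region $r\le 1$. Since \eqref{eq:phipsi} gives $\|\phi\|_{L^\infty(\R)}\le 1$, we have $|u(\bsx)|\le \int_\R|\mathcal{H}(k;x_1,x_2-t)|\,\rd t$, and the computation already carried out in the proof of Lemma \ref{lem:UjinL2} shows this is $\le C(1+x_1)^{1/2}\le C\sqrt 2$ when $r\le 1$. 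As $(1+r)^{-1/2}\ge 2^{-1/2}$ there, \eqref{eq:ubound} holds in this region, and it remains to treat $r\ge 1$ and prove $|u(\bsx)|\le C(p)\,r^{-1/2}$.

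For $r\ge 1$ I would split the $t$-integral at $|t|=r/2$. On the near part $\{|t|\le r/2\}$ the reverse triangle inequality gives $\rho(t)\ge r-|t|\ge r/2$, hence $\rho(t)^{-2}\le Cr^{-2}$ and $\rho(t)^{-3/2}\le Cr^{-3/2}$; using $x_1\le r$ and the finiteness of $C_p:=\int_\R(1+|t|)^{-p}\,\rd t$ (finite precisely because $p>1$), the near part contributes at most $C\,C_p\,x_1\,r^{-3/2}\le C\,C_p\,r^{-1/2}$. On the far part $\{|t|>r/2\}$ I would instead extract $(1+|t|)^{-p}\le C\,r^{-p}$ and invoke the two exact one-dimensional integrals $\int_\R \frac{x_1}{\rho(t)^{2}}\,\rd t=\pi$ (the Poisson kernel) and $\int_\R \frac{x_1}{\rho(t)^{3/2}}\,\rd t=C\,x_1^{1/2}$ (obtained by the substitution $t\mapsto(t-x_2)/x_1$), so that the far part contributes at most $C\,r^{-p}(1+x_1^{1/2})\le C\,r^{-p}\cdot 2r^{1/2}=C\,r^{1/2-p}$.

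The main point, and the only place where $p>1$ is genuinely needed beyond the integrability of $\phi$, is this final far-field estimate of the slowly decaying part of the kernel: since $\int_\R x_1\,\rho(t)^{-3/2}\,\rd t$ can be as large as $C\,r^{1/2}$, all the decay must be supplied by $\phi$, and $p>1$ guarantees $r^{1/2-p}\le r^{-1/2}$ for $r\ge 1$. Collecting the near and far contributions gives $|u(\bsx)|\le C(p)\,r^{-1/2}$ for $r\ge 1$, which together with the bounded-region estimate establishes \eqref{eq:ubound}. The remaining work is only the two elementary integral evaluations and the tracking of constants, which I would not expect to spell out in full.
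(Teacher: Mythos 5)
Your proof is correct, and it takes a genuinely different route from the paper's. The paper (following the argument between (4.16) and (4.17) in \cite{ACWH:02}) splits the boundary data around the lateral coordinate of the observation point, writing $\phi=\chi_{x_2}\phi+(1-\chi_{x_2})\phi$ with $\chi_{x_2}$ the indicator of $\{t:|x_2-t|\le(1+|x_2|)/2\}$; it then combines the pointwise decay of $\phi$ on that window, $\|\chi_{x_2}\phi\|_{L^\infty}\le C(1+|x_2|)^{-p}$, with an $L^1_{x_2}$-bound on the kernel, $\int_0^{(1+|x_2|)/2}|\mathcal{H}(k;x_1,s)|\,\rd s\le C(1+x_1)^{1/2}\min\bigl(1,(1+|x_2|)/(2x_1)\bigr)$, and handles the complement via the lower bound $\rho(t)\ge C(1+|\bsx|)$ off the window. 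You instead split at $|t|=r/2$ around the origin, using $\phi\in L^1$ together with $\rho(t)\ge r/2$ on the near part, and the pointwise decay $(1+|t|)^{-p}\le Cr^{-p}$ together with the exact kernel integrals $\int_\R x_1\rho^{-2}=\pi$ and $\int_\R x_1\rho^{-3/2}=Cx_1^{1/2}$ on the far part. All your estimates check out (in particular $\rho(t)\ge r-|t|$ on the near part, and $p>1$ delivering both $\int(1+|t|)^{-p}\,\rd t<\infty$ and $r^{1/2-p}\le r^{-1/2}$), so this is a complete and somewhat more elementary argument: it avoids the case analysis hidden in the paper's $\min\bigl(1,(1+|x_2|)/(2x_1)\bigr)$ factor. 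What the paper's decomposition buys in exchange is a more refined intermediate bound, $C(1+|x_2|)^{-p}(1+x_1)^{1/2}\min\bigl(1,(1+|x_2|)/(2x_1)\bigr)$ plus $Cx_1(1+|\bsx|)^{-3/2}/(p-1)$, which records extra decay away from grazing directions and is the form that generalizes to the weighted-space estimates used in the rough-surface literature; for the statement of the lemma as given, either route suffices.
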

\begin{proof}
	We will use the bound \eqref{eq:Hbound},
	where  $C>0$, there and throughout the remainder of the proof, denotes a constant independent of $\bsx$ (and also independent of $p$), not necessarily the same at each occurrence.
	It will be convenient for the proof to denote by $u[\phi]$  the function $u$ defined by \eqref{eq:hplem}.
	Then defining $\chi_{x_2}\in L^\infty(\R)$, for $x_2\in \R$, to be the characteristic function of the interval
	$$
	I_{x_2} :=  \left\{t\in \R;|x_2-t|\leq (1+|x_2|)/2\right\},
	$$
	we have  $u[\phi]=u[\chi_{x_2}\phi]+u[(1-\chi_{x_2})\phi]$. We will establish the bound \eqref{eq:ubound} for each term of the sum  (cf.\ the argument between (4.16) and (4.17) in \cite{ACWH:02}).
	
	For the first term we have
	$$|u[\chi_{x_2}\phi](\bsx)|\leq \int_\R|\mathcal{H}(k;x_1,x_2-t)|\chi_{x_2}(t)|\phi(t)|\rd t,$$
	which gives
	\begin{equation}
		\label{eq:bound-uchiphi}
		|u[\chi_{x_2}\phi](\bsx)|\leq 2\|\chi_{x_2}\phi\|_{L^\infty(\R)}\int_0^{\frac{1+|x_2|}{2}}|\mathcal{H}(k;x_1,s)|\rd s .
	\end{equation}
Now note that  if $|t|< |x_2|/3$ and $|x_2|\geq 3$, then $|t-x_2|>2|x_2|/3\geq (1+|x_2|)/2$, so that $\chi_{x_2}(t) = 0$. Thus, for $|x_2|\geq 3$,  \eqref{eq:phipsi} implies that
	$$
	\|\chi_{x_2}\phi\|_{L^\infty(\R)} \leq \sup_{|t|\geq|x_2|/3}|\phi(t)|\leq (1+|x_2|/3)^{-p},
	$$
	while also $\|\chi_{x_2}\phi\|_{L^\infty(\R)}\leq \|\phi\|_{L^\infty(\R)} \leq 1$, for every $x_2\in \R$, so  that
	\begin{equation} \label{eq:chiphi2}
		\|\chi_{x_2}\phi\|_{L^\infty(\R)} \leq C(1+|x_2|)^{-p}, \quad x_2\in \R.
	\end{equation}
Moreover, it follows from \eqref{eq:Hbound}   that
	\begin{eqnarray*}
		\int_0^{\frac{1+|x_2|}{2}}|\mathcal{H}(k;x_1,s)|\rd s &\leq
		& Cx_1	\int_0^{\frac{1+|x_2|}{2}}\left[(x_1^2+s^2)^{-1}+(x_1^2+s^2)^{-3/4}\right]\rd s\\
		& =& C\int_0^{\frac{1+|x_2|}{2x_1}}\left[(1+t^2)^{-1}+x_1^{1/2}(1+t^2)^{-3/4}\right]\rd t\\
		& \leq &\displaystyle C(1+x_1)^{1/2}\int_0^{\frac{1+|x_2|}{2x_1}}(1+t^2)^{-3/4}\rd t\\
		& \leq &\displaystyle C(1+x_1)^{1/2}\min\left(1,\frac{1+|x_2|}{2x_1}\right).
	\end{eqnarray*}
	Combining this with \eqref{eq:bound-uchiphi} and \eqref{eq:chiphi2}, we get the following estimate:
	$$	|u[\chi_{x_2}\phi](\bsx)|\leq  C(1+|x_2|)^{-p}(1+x_1)^{1/2}\min\left(1,\frac{1+|x_2|}{2x_1}\right),$$
	which gives the expected bound (recalling that $p>1$)
	$$	|u[\chi_{x_2}\phi](\bsx)|\leq  C(1+|\bsx|)^{-1/2},\quad\bsx\in\R^2_+.$$
	
	To see that the same bound holds for $u[(1-\chi_{x_2})\phi]$, we note that
	$$ |u[(1-\chi_{x_2})\phi](\bsx)|\leq C\sup_{t\in\R}|\mathcal{H}(k;x_1,x_2-t)(1-\chi_{x_2})(t)|\int_{\R} |\phi(t)|\, \rd t.$$
	Observing that, for $\bsx\in \R^2_+$ and $t\in \R\setminus I_{x_2}$,
	$$
	(x_1^2+(x_2-t)^2)^{1/2} \geq (x_1^2+(1+|x_2|)^2/4)^{1/2}\geq C(1+|\bsx|),
	$$
	and  using \eqref{eq:Hbound} and \eqref{eq:phipsi}, it follows that, for $\bsx\in\R^2_+$,
	\begin{equation}
				|u[(1-\chi_{x_2})\phi](\bsx)|\leq C\frac{x_1}{(1+|\bsx|)^{3/2}}\int_{\R} |\phi(t)|\, \rd t\leq \frac{C}{p-1}(1+|\bsx|)^{-1/2}.
			\end{equation}
\end{proof}

It follows from standard interior elliptic regularity results that if $u\in {C}^2(\R^2_+)$ satisfies
$$\Delta u +k^2u=0\quad\mbox{in }\R^2_+,$$
then for every $\varepsilon>0$ and $\bsx\in\R^2_+$ with $x_1>\varepsilon$, so that $\{\bsy\in\R^2;|\bsx-\bsy|\leq\varepsilon\}\subset \R^2_+$:
\begin{equation}
	\label{eq:elliptic-gradient}
	|\nabla u(\bsx)|\leq C\:\frac{1+k^2\varepsilon^2}{\varepsilon}\max_{|\bsx-\bsy|\leq\varepsilon}|u(\bsy)|
\end{equation}
where $C$ is an absolute constant (see, e.g. \cite[Lemma 2.7]{CWZ:98}).
Combining this estimate with the previous lemma, we have immediately the following corollary:
\begin{cor}\label{cor-nablaubound}
	Suppose that $\phi\in L^\infty(\R)$ satisfies \eqref{eq:phipsi} for  some $p>1$. Then there exists a constant $C(p)>0$ such that for all $\varepsilon>0$,
 the function $u$ defined by \eqref{eq:hplem} satisfies
	\begin{equation} \label{eq:nablaubound}
		|\nabla u(\bsx)| \leq C(p)\:\frac{1+k^2\varepsilon^2}{\varepsilon} (1+|\bsx|)^{-1/2}, \quad  x_1>\varepsilon, \,x_2\in\R.
	\end{equation}
\end{cor}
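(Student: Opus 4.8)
The plan is to obtain \eqref{eq:nablaubound} by feeding the pointwise decay estimate \eqref{eq:ubound} of Lemma~\ref{lem-decayingphi} into the interior gradient estimate \eqref{eq:elliptic-gradient}. By Lemma~\ref{lem:UjinL2} the function $u$ defined by \eqref{eq:hplem} belongs to $C^\infty(\R^2_+)$ and satisfies $\Delta u + k^2 u = 0$ in $\R^2_+$, so for any radius $r$ with $0<r<x_1$ the closed disc $\{\bsy:|\bsx-\bsy|\le r\}$ lies in $\R^2_+$ and \eqref{eq:elliptic-gradient} gives $|\nabla u(\bsx)|\le C\,\frac{1+k^2r^2}{r}\max_{|\bsx-\bsy|\le r}|u(\bsy)|$. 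Since each such $\bsy$ lies in $\R^2_+$, Lemma~\ref{lem-decayingphi} bounds $|u(\bsy)|$ by $C(p)(1+|\bsy|)^{-1/2}$, and as $|\bsy|\ge|\bsx|-r$ the maximum is at most $C(p)(1+|\bsx|-r)^{-1/2}$.

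Taking $r=\varepsilon$ here would only give the weaker bound $|\nabla u(\bsx)|\le C(p)\frac{1+k^2\varepsilon^2}{\varepsilon}(1+|\bsx|-\varepsilon)^{-1/2}$: since we assume merely $\varepsilon<x_1\le|\bsx|$, the value $\varepsilon$ may be comparable to $|\bsx|$, in which case the disc of radius $\varepsilon$ reaches close to the origin and $(1+|\bsx|-\varepsilon)^{-1/2}$ is not controlled by $(1+|\bsx|)^{-1/2}$ uniformly in $\varepsilon$. I expect this uniformity to be the only real difficulty. I would resolve it by instead applying \eqref{eq:elliptic-gradient} with the smaller radius $r:=\min(\varepsilon,|\bsx|/2)$, which is admissible because $r\le\varepsilon<x_1$. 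Then $r\le|\bsx|/2$, so $1+|\bsx|-r\ge\frac{1}{2}(1+|\bsx|)$ and hence $\max_{|\bsx-\bsy|\le r}|u(\bsy)|\le\sqrt{2}\,C(p)(1+|\bsx|)^{-1/2}$, now with the decay required by \eqref{eq:nablaubound}.

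It then remains only to check that lowering the radius inflates the prefactor by at most a bounded factor, i.e.\ that $\frac{1+k^2r^2}{r}\le 2\,\frac{1+k^2\varepsilon^2}{\varepsilon}$. This is trivial when $r=\varepsilon$. In the remaining case $r=|\bsx|/2<\varepsilon$ we have $\varepsilon<|\bsx|=2r$, so $\varepsilon/r<2$, while $r<\varepsilon$ gives $1+k^2r^2\le 1+k^2\varepsilon^2$; writing $\frac{(1+k^2r^2)/r}{(1+k^2\varepsilon^2)/\varepsilon}=\frac{\varepsilon}{r}\cdot\frac{1+k^2r^2}{1+k^2\varepsilon^2}$ and multiplying the two estimates yields the factor $2$. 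Combining the bounds above then gives $|\nabla u(\bsx)|\le C'(p)\frac{1+k^2\varepsilon^2}{\varepsilon}(1+|\bsx|)^{-1/2}$ for all $\bsx$ with $x_1>\varepsilon$, which is exactly \eqref{eq:nablaubound}; apart from these two elementary inequalities the argument is a direct substitution.
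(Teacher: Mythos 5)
Your proof is correct and follows the same route as the paper, which simply asserts that the corollary follows ``immediately'' by combining the interior gradient estimate \eqref{eq:elliptic-gradient} with Lemma \ref{lem-decayingphi} and gives no further detail. Your extra care in applying \eqref{eq:elliptic-gradient} with radius $\min(\varepsilon,|\bsx|/2)$ rather than $\varepsilon$ correctly patches the small uniformity-in-$\varepsilon$ point that the paper's ``immediately'' glosses over (the naive radius $\varepsilon$ only yields $(1+|\bsx|-\varepsilon)^{-1/2}$, which is not uniformly comparable to $(1+|\bsx|)^{-1/2}$ when $\varepsilon$ is close to $|\bsx|$), and both of your auxiliary inequalities check out.
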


Now, using the previous results, we will establish a radiation condition  for $u$ defined by \eqref{eq:hplem} when its trace $\varphi$ satisfies itself a radiation condition.
\begin{lem}
	Suppose that $\phi\in {C}(\R)$ satisfies
	\begin{equation} \label{eq:phi_asymp}
		\phi(t) = \left\{\begin{array}{cc} c_+\,\re^{\ri k |t|}|t|^{-1/2} + O(|t|^{-p}), & \mbox{as} \quad t\to +\infty,\\
			c_-\,\re^{\ri k |t|}|t|^{-1/2} + O(|t|^{-p}), & \mbox{as} \quad t\to -\infty,\end{array}\right.
	\end{equation}
	for some constants $c_+,c_-\in \C$ and   $p>1$. Define $u$ by \eqref{eq:hplem} in $\R^2_+$. Then the following properties hold:
	\begin{itemize}
		\item[(i)]  there exists a constant $C>0$ such that
		\begin{equation} \label{eq:ubound-general}
			|u(\bsx)| \leq C (1+|\bsx|)^{-1/2}, \quad \bsx\in \R^2_+;
		\end{equation}
		\item[(ii)]  for all $\varepsilon>0$,
		\begin{equation} \label{eq:Sommerfeld-epsilon}
			\lim_{R\to +\infty} R^{1/2}\sup \left\{\left|\frac{\partial u(\bsx)}{\partial r} - \ri k u(\bsx)\right|; x_1>\varepsilon \mbox{ and }|\bsx| = R\right\} =0.
		\end{equation}
	\end{itemize}
\end{lem}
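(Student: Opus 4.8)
The plan is to split the data into its explicit slowly-decaying leading part and a rapidly decaying remainder, and to treat the two contributions separately, since for the remainder the machinery of Lemma~\ref{lem-decayingphi} and Corollary~\ref{cor-nablaubound} already applies. Concretely, fix a cut-off $\psi\in C(\R)$ with $\psi(t)=0$ for $|t|\leq 1$ and $\psi(t)=1$ for $|t|\geq 2$, and set
\[
\phi_s(t) := \psi(t)\left(c_+\,\mathbf{1}_{t>0}+c_-\,\mathbf{1}_{t<0}\right)\re^{\ri k|t|}|t|^{-1/2}, \qquad \phi_r := \phi-\phi_s .
\]
By \eqref{eq:phi_asymp}, $|\phi_r(t)|\leq C(1+|t|)^{-p}$ with $p>1$, while $\phi_s\in C(\R)\cap L^\infty(\R)$ is the explicit oscillating profile. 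Writing $u=u[\phi_s]+u[\phi_r]$ (with $u[\cdot]$ as in the proof of Lemma~\ref{lem-decayingphi}), it suffices to prove (i) and (ii) for each of $u[\phi_s]$ and $u[\phi_r]$; the full result then follows by linearity.

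The remainder is the easy case. Since $\phi_r$ satisfies \eqref{eq:phipsi} up to a multiplicative constant, Lemma~\ref{lem-decayingphi} gives \eqref{eq:ubound-general} for $u[\phi_r]$. For the radiation condition I would use that, by \eqref{eq:hpr_kernel}, the kernel $\bsx\mapsto\mathcal H(k;x_1,x_2-t)$ is a constant multiple of the $y_1$-derivative of $\Phi(\bsx,(0,t))$, hence an outgoing cylindrical wave radiating from the source $(0,t)$; thus, writing $\rho_t:=|\bsx-(0,t)|$ and $\bn_t:=(\bsx-(0,t))/\rho_t$, the large-argument asymptotics of $H^{(1)}_1$ give $(\partial_{\rho_t}-\ri k)\mathcal H = o(\rho_t^{-1/2})$ while $\mathcal H,\nabla\mathcal H=O(\rho_t^{-1/2})$. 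Decomposing $\partial_r-\ri k = (\hat\bsx-\bn_t)\cdot\nabla+(\partial_{\rho_t}-\ri k)$ and using $|\hat\bsx-\bn_t|=O(|t|/r)$ for $|t|\lesssim r$, I would then split the $t$-integral at $|t|\sim r$: a short computation shows that, precisely because $p>1$, both the near part and the far part contribute $o(r^{-1/2})$, uniformly for $x_1>\eps$ (indeed uniformly up to the boundary, the $x_1$-factor in $\mathcal H$ helping near grazing). Hence (ii) holds for $u[\phi_r]$.

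The leading part $u[\phi_s]$ is the crux, and here the crude absolute-value estimate used for the remainder fails: replacing $\re^{\ri k|t|}$ by its modulus $1$, one checks that $\int_{\R}|\mathcal H(k;x_1,x_2-t)|\,|t|^{-1/2}\,\rd t$ is only $O(1)$, not $O(r^{-1/2})$, in the directions near the boundary, so the oscillation of $\phi_s$ must be exploited. I would insert the large-argument expansion of $H^{(1)}_1$ into $\mathcal H$ and reduce (taking the $t>0$ piece, which produces $c_+$; the $t<0$ piece is symmetric and produces $c_-$) to oscillatory integrals of the form $\int_1^\infty x_1\,\rho_t^{-3/2}\,\re^{\ri\Psi(t)}\,t^{-1/2}\,\rd t$ with phase $\Psi(t)=k(\rho_t+t)$. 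A direct computation gives $\Psi'(t)=k\bigl(1-(x_2-t)/\rho_t\bigr)\in(0,2k)$, so there is no interior stationary point for $x_1>0$; writing $\bsx=r(\cos\theta,\sin\theta)$ one finds $\Psi'(t)\geq \Psi'(1)\to k(1-\sin\theta)$, which is bounded below for $\theta$ away from $\pi/2$. Thus non-stationary phase (integration by parts) shows $u[\phi_s]$ is of lower order in those directions, while the entire $r^{-1/2}$ contribution, carrying the coefficient $c_+$, is produced in the grazing regime $\theta\to\pi/2$, where $\Psi'$ degenerates over a long $t$-interval. Carrying out this analysis (in the spirit of the oscillatory estimates of \cite{ACWH:02} and the far-field analysis of \cite{CWZ:98,ArensHohage05}) yields $u[\phi_s](\bsx)=\re^{\ri kr}r^{-1/2}\bigl(F_s(\hat\bsx)+o(1)\bigr)$, with $F_s$ continuous for $\hat x_1>0$, together with the matching estimate for $\partial_r u[\phi_s]$, giving (i) and (ii) for this piece.

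The main obstacle, and the reason the conclusion is stated only for the slightly smaller half-plane $\{x_1>\eps\}$, is exactly this uniform control as $\theta\to\pm\pi/2$: the constraint $x_1>\eps$ with $|\bsx|=R\to+\infty$ forces $\cos\theta=x_1/R\to0$, so one is pushed into the grazing directions along the boundary, where the slow $|t|^{-1/2}$ decay of $\phi_s$ dominates the integral and $\Psi$ is nearly non-oscillatory. It is precisely the prescribed asymptotic form \eqref{eq:phi_asymp} that makes this degenerate oscillatory integral collapse to a clean $\re^{\ri kr}r^{-1/2}$ behaviour rather than something larger; any weaker control of $\phi$ at infinity would be expected to fail here. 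Once (i) and (ii) are established for $u[\phi_s]$ and $u[\phi_r]$, adding them yields \eqref{eq:ubound-general} and \eqref{eq:Sommerfeld-epsilon} for $u$, completing the proof.
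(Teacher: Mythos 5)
Your overall strategy---split $\phi$ into a slowly decaying oscillating leading part plus a remainder satisfying \eqref{eq:phipsi}, and handle the remainder via Lemma \ref{lem-decayingphi} and Corollary \ref{cor-nablaubound}---matches the paper's in outline, and your treatment of the remainder $\phi_r$ is essentially workable (the paper does it a little differently, truncating the rapidly decaying part to a compact interval, invoking standard results for the resulting double-layer potential on a bounded interval, and making the tail uniformly small in $R$ by applying Lemma \ref{lem-decayingphi} and Corollary \ref{cor-nablaubound} with a reduced exponent $q\in(1,p')$; either route can be made rigorous). The genuine gap is in the leading part $u[\phi_s]$, which you correctly identify as the crux and then do not prove: what you give is a programme for a uniform (non-)stationary phase analysis of $\int_1^\infty x_1\rho_t^{-3/2}\re^{\ri\Psi(t)}t^{-1/2}\,\rd t$, $\Psi(t)=k(\rho_t+t)$, with all the difficulty concentrated exactly where you stop, namely the uniform control of both $u[\phi_s]$ and $\partial_r u[\phi_s]-\ri k u[\phi_s]$ as $\cos\theta=x_1/R\to 0$, where $\Psi'$ degenerates over a $t$-interval of length comparable to $R$. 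Moreover, the one concrete claim you make about this integral is false: integration by parts does \emph{not} show that $u[\phi_s]$ is of lower order than $r^{-1/2}$ in directions bounded away from grazing. After one integration by parts the endpoint of the integration range contributes a term of size $x_1\rho_1^{-3/2}/\Psi'(1)\sim \cos\theta\,r^{-1/2}/\bigl(k(1-\sin\theta)\bigr)$, which is exactly of order $r^{-1/2}$; equivalently, the restriction of $\phi_s$ to any bounded interval already generates a double-layer potential radiating like $r^{-1/2}$ in all directions. This is not a side issue: to prove (ii) you must identify the precise $\re^{\ri kr}r^{-1/2}$ structure of $u[\phi_s]$ and of its radial derivative, and your account of where that structure comes from is internally inconsistent with your own claim that $F_s$ is continuous.

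The idea you are missing, which lets the paper bypass the oscillatory-integral analysis entirely, is to take the leading part to be not the bare profile $c_\pm\,\re^{\ri k|t|}|t|^{-1/2}$ but the boundary trace of an \emph{explicit outgoing solution}. By \eqref{eq:Pasymp}, the function $\phi^{(1)}(t)=(c/c^*)\Phi(\bsz,(0,t))+(c'/c^*)\Phi(\bsz',(0,t))$ of \eqref{eq:phi_1}, with $\bsz,\bsz'$ chosen in $\{z_1<0\}$ and $c,c'$ solving the $2\times 2$ system \eqref{eq:phi_1_const}, reproduces the leading asymptotics of $\phi$ at $\pm\infty$, so that $\phi-\phi^{(1)}$ decays like $|t|^{-\min(p,3/2)}$ and falls under Lemma \ref{lem-decayingphi}. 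Since $\Phi(\bsz,\cdot)$ and $\Phi(\bsz',\cdot)$ solve the Helmholtz equation in $\overline{\R^2_+}$ and are outgoing, Green's second identity (the implications (ii)$\Rightarrow$(v)$\Rightarrow$(i) of \cite[Theorem 2.9]{CWZ:98}) shows that the representation \eqref{eq:hplem} applied to $\phi^{(1)}$ returns exactly $(c/c^*)\Phi(\bsz,\bsx)+(c'/c^*)\Phi(\bsz',\bsx)$, for which \eqref{eq:ubound-general} and \eqref{eq:Sommerfeld-epsilon} are immediate. Unless you carry out in full the uniform phase analysis you sketch---including the derivative estimates near grazing needed for \eqref{eq:Sommerfeld-epsilon}---your argument does not constitute a proof of the lemma.
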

\begin{proof}
	We use the same notation $u[\phi]$ for \eqref{eq:hplem} as in the proof of Lemma \ref{lem-decayingphi}. We prove this proposition by writing  $\phi$ as $\phi=\phi^{(1)}+\phi^{(2)}$, and correspondingly writing the integral \eqref{eq:hplem} as $u[\phi]=u[\phi^{(1)}]+u[\phi^{(2)}]$, choosing this splitting as follows: $\phi^{(1)}$ is the trace on the boundary of a solution   of the Helmholtz equation in $\R^2_+$ that clearly satisfies \eqref{eq:ubound-general} and \eqref{eq:Sommerfeld-epsilon}, and $\phi^{(2)}(t)$ decays sufficiently rapidly as $|t|\to\infty$ so that, thanks to Lemma \ref{lem-decayingphi} and Corollary \ref{cor-nablaubound},  $u[\phi^{(2)}]$ also satisfies   \eqref{eq:ubound-general} and \eqref{eq:Sommerfeld-epsilon}.
	
	In more detail, from the asymptotics  of the Hankel function $H_0^{(1)}$ it follows that, for some non-zero $c^*\in \C$,
	\begin{equation} \label{eq:Pasymp}
		\Phi(\bsx,(0,t))= c^*\,\re^{\ri k |t-x_2|}\,|t|^{-1/2} + O(|t|^{-3/2}) \quad \mbox{as} \quad |t|\to \infty,
	\end{equation}
	for every $\bsx\in \R^2$.
	Pick $\bsz=(z_1,z_2),\bsz'=(z'_1,z'_2)\in \R^2$ with $k(z_2-z'_2)/\pi\not\in \Z$, and with $z_1,z'_1<0$.  Define $\phi^{(1)}\in C(\R)$ by
	\begin{equation}\label{eq:phi_1}
		\phi^{(1)}(t) := (c/c^*) \Phi(\bsz,(0,t))+(c'/c^*)\Phi(\bsz',(0,t))\;\text{ for}\; t\in \R.
		\end{equation}
		 Then $\phi^{(1)}(t)$ has the same leading asymptotic behaviour as $\phi(t)$ as $t\to\pm\infty$ provided
	\begin{equation}\label{eq:phi_1_const}
	c\re^{-\ri kz_2} + c'\re^{-\ri kz'_2} = c_+ \quad \mbox{and} \quad c\re^{\ri kz_2} + c'\re^{\ri kz'_2} = c_-.
	\end{equation}
Let us choose $c$ and $c'$ so that these two linear constraints hold; this is possible since $k(z_2-z'_2)/\pi\not\in \Z$ implies that the determinant $\exp(\ri k(z'_2-z_2))-\exp(\ri k(z_2-z'_2)=2\ri \sin(k(z_2-z'_2))\neq 0$ . 	Then it is clear from an application {of Green's second identity } that,
	for $\bsx\in \R^2_+$,  $u[\phi^{(1)}](\bsx)=(c/c^*) \Phi(\bsz,\bsx)+(c'/c^*)\Phi(\bsz', \bsx)$ (see, e.g., the proof that (ii)$\Rightarrow$(v)$\Rightarrow$(i) in \cite[Theorem 2.9]{CWZ:98}).
	Hence $u[\phi^{(1)}]$ satisfies items (i) and (ii).
	
	It remains to show that the same is true for $u[\phi^{(2)}]$ where $\phi^{(2)}=\phi-\phi^{(1)}$. It follows  from \eqref{eq:phi_asymp} and \eqref{eq:Pasymp}--\eqref{eq:phi_1_const} that,  for some $c_0>0$, where  $p':=\min(p,3/2)$,
	\begin{equation} \label{eq:phipsi2}
		|\phi^{(2)}(t)| \leq c_0(1+|t|)^{-p'}, \quad t\in \R.
	\end{equation}
	Item (i) is therefore a direct consequence of Lemma \ref{lem-decayingphi}. To see that $u[\phi^{(2)}]$ also satisfies the radiation condition (ii), choose $q\in (1,p')$ and,
	for each $A>0$, let $\chi_A$ be the characteristic function of the interval $[-A,A]$. Write  $u[\phi^{(2)}] =u[\chi_A\phi^{(2)}]+ u[(1-\chi_A)\phi^{(2)}]$.
	Now, by \eqref{eq:phipsi2}, $$|(1-\chi_A(t))\phi^{(2)}(t)| \leq c_0 (1+|t|)^{-q}(1+A)^{q-p'}$$ so that, by \eqref{eq:ubound} and \eqref{eq:nablaubound} (applied with $p$ replaced by $q$),  there exists a constant $C(q)>0$ depending only on $q$ such that
	$$
	|u[(1-\chi_A)\phi^{(2)}](\bsx)| \leq C(q)c_0(1+A)^{q-p'}(1+|\bsx|)^{-1/2}, \quad \bsx\in \R^2_+,
	$$
	and from Corollary \ref{cor-nablaubound}, for all $\varepsilon>0$, there exists a constant $C(q,\varepsilon)>0$ depending only on $q$ and $\varepsilon$ such that
	$$
	|\nabla u[(1-\chi_A)\phi^{(2)}](\bsx)| \leq C(q,\varepsilon)c_0(1+A)^{q-p'}(1+|\bsx|)^{-1/2}, \quad x_1>\varepsilon, \,x_2\in\R.
	$$
	Let us set for any function $v$ defined on $\R^2_+$:$$M_{R,\varepsilon}(v) := R^{1/2}\sup \left\{\left|\frac{\partial v(\bsx)}{\partial r} - \ri k v(\bsx)\right|; x_1>\varepsilon \mbox{ and }|\bsx| = R\right\} .
	$$
	It  follows  from the  estimates above that, for all $R>0$,
	$$M_{R,\varepsilon}(u[(1-\chi_A)\phi^{(2)}])\leq \max(C(q),C(q,\varepsilon))c_0(1+A)^{q-p'}.$$
	Thus, for all $\varepsilon>0$, given $\eta>0$ we can choose $A>0$ such that $M_{R,\varepsilon}(u[(1-\chi_A)\phi^{(2)}])\leq \eta/2$ for all $R>0$. But also, for each $A>0$, it is standard that $u[\chi_A\phi^{(2)}](\bsx)$, a double-layer potential supported on a bounded interval, satisfies the Sommerfeld radiation condition, in particular that $M_{R,\varepsilon}(u[\chi_A\phi^{(2)}])\leq \eta/2$ for $R$ large enough. Thus, $M_{R,\varepsilon}(u[\phi^{(2)}])\leq \eta$ for all sufficiently large $R$, which concludes the proof.
\end{proof}

\begin{cor}
	\label{cor:Helmholtz-halfplane}
	Suppose that $\phi=\phi_1+\phi_2$ where $\phi_1\in L^2(\R)$ is compactly supported and $\phi_2\in {C}(\R)$. If $\phi$ satisfies
	\eqref{eq:phi_asymp}  and $u$  is defined by \eqref{eq:hplem}, then
	\begin{itemize}
		\item[(i)]  there exist two constants $C>0$ and $R>0$ such that
		\begin{equation} \label{eq:ubound-largeR}
			|u(\bsx)| \leq C (1+|\bsx|)^{-1/2}, \quad \bsx\in \R^2_+, \; |\bsx|>R;
		\end{equation}
		\item[(ii)]  for all $\varepsilon>0$,
		\eqref{eq:Sommerfeld-epsilon} holds.
	\end{itemize}
\end{cor}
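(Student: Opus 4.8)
The plan is to exploit the linearity of the half-plane representation \eqref{eq:hplem} in its density and split $u=u[\phi_1]+u[\phi_2]$, using the notation $u[\cdot]$ from the proof of the previous lemma. The two pieces are then treated by completely different means: $u[\phi_2]$ by a direct appeal to the previous lemma, and $u[\phi_1]$ by elementary estimates on a double-layer potential whose density has compact support. The point of the splitting is that the only feature preventing the global bound \eqref{eq:ubound-general} from holding here is the merely-$L^2$ density $\phi_1$ near the boundary, and this is localised to the bounded support of $\phi_1$.

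First I would check that $\phi_2$ already satisfies all the hypotheses of the previous lemma. By assumption $\phi_2\in C(\R)$, and since $\phi_1$ is supported in some bounded interval $[-T,T]$ we have $\phi_2(t)=\phi(t)$ for $|t|>T$; hence $\phi_2$ inherits the asymptotics \eqref{eq:phi_asymp} of $\phi$, with the same constants $c_\pm$ and the same exponent $p$. The previous lemma then applies verbatim to $\phi_2$ and gives at once that $u[\phi_2]$ satisfies \eqref{eq:ubound-general} on all of $\R^2_+$ (in particular \eqref{eq:ubound-largeR}) and the radiation condition \eqref{eq:Sommerfeld-epsilon}.

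It remains to handle $u[\phi_1]$, where $\phi_1\in L^2(\R)$ with $\supp\phi_1\subset[-T,T]$, so that also $\phi_1\in L^1(\R)$. Fix any $R\geq 2(T+1)$. For $|\bsx|>R$ and $t\in[-T,T]$ one has $|\bsx-(0,t)|\geq|\bsx|-T\geq|\bsx|/2>1$, so, using the bound \eqref{eq:Hbound} (with $\Im(k)=0$) together with $x_1\leq|\bsx-(0,t)|$, one obtains
$$
|\mathcal{H}(k;x_1,x_2-t)|\leq C\,|\bsx-(0,t)|^{-1/2}\leq C'|\bsx|^{-1/2},
$$
uniformly in $t\in[-T,T]$. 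Integrating against $\phi_1$ yields $|u[\phi_1](\bsx)|\leq C'\|\phi_1\|_{L^1(\R)}|\bsx|^{-1/2}$ for $|\bsx|>R$, which is \eqref{eq:ubound-largeR}. For the radiation condition, $u[\phi_1]$ is (twice) a double-layer potential with $L^1$ density supported on the bounded set $\{(0,t):t\in[-T,T]\}$, and it is classical that such a potential satisfies \eqref{eq:Sommerfeld-epsilon}; this is exactly the fact already invoked for $u[\chi_A\phi^{(2)}]$ at the end of the proof of the previous lemma. Adding the two contributions gives (i) and (ii).

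I expect no genuine obstacle once the previous lemma is available: the argument is essentially bookkeeping. The only two points needing care are that $\phi_2$ truly inherits \eqref{eq:phi_asymp} (immediate from the compact support of $\phi_1$) and that the $L^2$-but-not-$L^\infty$ nature of $\phi_1$ is harmless because we only ever estimate $u[\phi_1]$ away from $\supp\phi_1$. This last point is precisely why (i) is asserted only for $|\bsx|>R$, rather than globally on $\R^2_+$ as in the previous lemma: near the support of $\phi_1$ the density is only locally $L^2$ and no pointwise bound on $u$ can be expected there.
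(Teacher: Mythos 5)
Your proposal is correct and follows essentially the same route as the paper: split $u=u[\phi_1]+u[\phi_2]$, apply the preceding lemma to $\phi_2$ (which inherits \eqref{eq:phi_asymp} since $\phi_1$ has compact support), and treat $u[\phi_1]$ as a double-layer potential with compactly supported density, for which \eqref{eq:ubound-largeR} and \eqref{eq:Sommerfeld-epsilon} are standard. The extra detail you supply (the explicit large-$|\bsx|$ estimate via \eqref{eq:Hbound}) is sound and merely fills in what the paper delegates to a citation.
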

\begin{proof}
	Again, we write $u[\phi]=u[\phi_1]+u[\phi_2]$. The properties for $u[\phi_2]$ have been established in the previous lemma, while $u[\phi_1]$ is a double-layer potential supported on a bounded interval, for which such properties are standard, e.g. \cite[Theorem 2.14, Lemma 2.5]{ChGrLaSp:11}.
\end{proof}

\subsection{Proof of the uniqueness result}
We are now able to prove the first main result of the paper.

\begin{theorem} \label{thm:HSMMUnique}
Suppose $k>0$. There exists, for each $g\in H^{1/2}(\Gamma)$, a unique solution $(\varphi^1,..., \varphi^\NO)\in L^2_{\rm loc}(\Sigma^1)\times\ldots\times L^2_{\rm loc}(\Sigma^\NO)$  satisfying \eqref{eq:HSMM_real} such that, for all $j$, $\varphi^j$ satisfies the radiation condition \eqref{eq:RadCond}.
\end{theorem}
Existence was proven by the construction above of a solution of \eqref{eq:HSMM_real} from the unique solution $u$ of (\ref{pb:probleme_Dir_real},\ref{eq:Sommerfeld}), namely $\varphi^j:=u|_{\Sigma^j}$, $j=1,...,\NO$. Note in particular that the Sommerfeld radiation condition \eqref{eq:Sommerfeld} for $u$ implies the far-field asymptotics \eqref{eq:ffp}, which implies in turn that each trace $\varphi^j$ satisfies the radiation condition \eqref{eq:RadCond}.
Uniqueness follows from the following proposition since (\ref{pb:probleme_Dir_real}, \ref{eq:Sommerfeld}) is well-posed. 
\begin{proposition}\label{prop:HSMMUnique}
	Suppose $k>0$ and let $ (\varphi^1,..., \varphi^\NO)\in L^2_{\rm loc}(\Sigma^1)\times\ldots\times L^2_{\rm loc}(\Sigma^\NO)$  satisfying \eqref{eq:HSMM_real} (with $g\in H^{1/2}(\Gamma)$) be such that, for all $j$, $\varphi^j$ satisfies the radiation condition \eqref{eq:RadCond}.
	Then the compatibility equations \eqref{eq:compatibility} hold  for all $j\in \Z/\mathcal{N}\Z$,
	 the function  defined by
	$	u=U^j(\varphi^j)$ in $\Omega^j$, $j=1,...,\NO$,
	is the unique solution $u\in H^1_{\rm loc}(\Omega)$ of (\ref{pb:probleme_Dir_real}, \ref{eq:Sommerfeld}), and $\varphi^j=u|_{\Sigma^j}$ for each $j$.
	\end{proposition}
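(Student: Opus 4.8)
The plan is to follow the two-step structure of the proof of Proposition \ref{prop:trraceL2}, replacing the use of $\Im(k)>0$ (exponential decay, global $L^2$, and Dirichlet uniqueness in the overlap) by the radiation condition \eqref{eq:RadCond} and the half-plane radiation result Corollary \ref{cor:Helmholtz-halfplane}. In Step 1 I would prove the compatibility relations \eqref{eq:compatibility} in each overlap $\Omega^j\cap\Omega^{j+1}$; in Step 2 I would show that the resulting globally defined function $u$ solves the scattering problem (\ref{pb:probleme_Dir_real}, \ref{eq:Sommerfeld}), so that, by uniqueness of that problem, $u$ coincides with its unique solution, which immediately yields $\varphi^j=u|_{\Sigma^j}$.

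For Step 1, set $v:=U^j(\varphi^j)-U^{j+1}(\varphi^{j+1})$ on the convex wedge $W:=\Omega^j\cap\Omega^{j+1}$, whose opening angle is $\gamma:=\pi-\Theta^{j,j+1}\in(0,\pi)$. By Lemma \ref{lem:UjinL2}, $v\in C^\infty(W)$ and $\Delta v+k^2v=0$ in $W$, and exactly as in Proposition \ref{prop:trraceL2} the matching equations \eqref{eq:HSMM_real} and the continuity statement of Lemma \ref{lem:UjinL2} show that $v$ is continuous on $\overline{W}\setminus\{S_j\}$ and vanishes on $\partial W\setminus\{S_j\}$. The new ingredient is the behaviour at infinity: writing $\varphi^j=\phi_1+\phi_2$ with $\phi_1\in L^2(\Sigma^j)$ compactly supported (carrying the data on $\Gamma^j$ and the two corner points $S_{j-1},S_j$) and $\phi_2\in C(\R)$ reproducing the asymptotics \eqref{eq:RadCond} (so that \eqref{eq:phi_asymp} holds with $p=3/2$), Corollary \ref{cor:Helmholtz-halfplane} gives $U^j(\varphi^j)=O(r^{-1/2})$ together with the radiation condition \eqref{eq:Sommerfeld-epsilon} for every $\varepsilon>0$, and similarly for $U^{j+1}$. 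Since every $\bsx\in W$ has $x^j_1>a^j$ and $x^{j+1}_1>a^{j+1}$, choosing $\varepsilon<\min(a^j,a^{j+1})$ shows that $v$ is $O(r^{-1/2})$ and satisfies the Sommerfeld radiation condition uniformly on $W\cap\{|\bsx|=R\}$ as $R\to\infty$. To run a Rellich argument I would first show, as in Step 1 of Proposition \ref{prop:trraceL2}, that $v$ is $H^1$ on every bounded subdomain of $W$: on $Q:=\mathcal{O}'\cap W$ (bounded and convex) one has $v\in L^2(Q)$ (now using the local $L^2$-bound \eqref{eq:uboundL2_rek} of Lemma \ref{lem:UjinL2} for the $\phi_1$-part and the pointwise bound of Lemma \ref{lem-decayingphi} for the $\phi_2$-part), and solving the auxiliary Dirichlet problem and applying Lemma \ref{lem-singularities} to the convex $Q$ yields $v|_Q\in H^1(Q)$. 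Green's first identity on $W_R:=W\cap\{|\bsx|<R\}$ then gives $\mathrm{Im}\int_{\{|\bsx|=R\}\cap W}\overline{v}\,\partial_r v\,\rd s=0$ (the ray contributions vanishing since $v=0$ there), and combining this with the radiation condition in the standard way gives $\int_{\{|\bsx|=R\}\cap W}|v|^2\,\rd s\to 0$. A Rellich-type uniqueness argument in the sector, expanding $v$ in the angular Dirichlet eigenfunctions $\sin(n\pi\theta/\gamma)$ and using Hankel-function asymptotics (or invoking a classical uniqueness result), then forces $v=0$, i.e.\ \eqref{eq:compatibility}.

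For Step 2, the relations \eqref{eq:compatibility} make $u:=U^j(\varphi^j)$ on $\Omega^j$ a well-defined function on $\Omega=\bigcup_j\Omega^j$, with $u\in C^\infty(\Omega)$ and $\Delta u+k^2u=0$. To obtain $u\in H^1_{\mathrm{loc}}(\Omega)$ I would repeat the local corner analysis of Step 2 of Proposition \ref{prop:trraceL2}: on $\Omega':=\mathcal{O}'\setminus\mathcal{O}$ the singular expansion of Lemma \ref{lem-singularities} has coefficients $c_j$ that must vanish, because $\varphi^j\in L^2_{\mathrm{loc}}(\Sigma^j)$ while $r_j^{-\pi/\alpha_j}$ is not locally square integrable, giving $u\in H^1(\Omega')$ and hence $H^1$ on every bounded subdomain. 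Crucially, unlike the complex case, I do not (and cannot) use Lemma \ref{lem-L2givesH1}, since $u$ is only $O(r^{-1/2})$ and not globally $L^2$; but only $H^1_{\mathrm{loc}}$ is needed. The $H^{1/2}_{\mathrm{loc}}$ trace of $u$ on $\Sigma^j$ equals $\varphi^j$, so $u=g$ on $\Gamma$. Finally, since the half-planes cover all directions at infinity, every $\bsx$ with $|\bsx|$ large lies in some $\Omega^j$ with $x^j_1>a^j$, so applying \eqref{eq:Sommerfeld-epsilon} to $U^j=u$ with $\varepsilon<\min_j a^j$ and taking a maximum over the finitely many $j$ upgrades \eqref{eq:Sommerfeld-epsilon} to the full Sommerfeld condition \eqref{eq:Sommerfeld}. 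Thus $u$ solves (\ref{pb:probleme_Dir_real}, \ref{eq:Sommerfeld}); by uniqueness of that problem $u$ is its unique solution, and $\varphi^j=u|_{\Sigma^j}$ follows.

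I expect the main obstacle to be Step 1: securing a genuine, \emph{uniform} Sommerfeld radiation condition for $v$ throughout the overlap wedge (precisely the content of Corollary \ref{cor:Helmholtz-halfplane}, the reason this result is flagged as of independent interest) and then converting ``$v$ radiates, decays like $r^{-1/2}$, and vanishes on $\partial W$'' into $v=0$ by a Rellich argument in an \emph{unbounded convex sector}, where the $H^1$-regularity of $v$ up to the corner $S_j$ (via Lemma \ref{lem-singularities} on the convex $Q$) is the delicate prerequisite ensuring no spurious boundary term appears at the vertex. By comparison, the assembly of the global radiation condition and the $H^1_{\mathrm{loc}}$ regularity in Step 2, though technical, follow the already-established complex-case template.
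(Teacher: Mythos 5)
Your overall architecture (two steps mirroring Proposition \ref{prop:trraceL2}, with Corollary \ref{cor:Helmholtz-halfplane} replacing the $\Im(k)>0$ decay, the local corner analysis via Lemma \ref{lem-singularities}, no use of Lemma \ref{lem-L2givesH1}, and the overlap of the half-planes giving the global Sommerfeld condition in Step 2) is the paper's. But there is a genuine gap at the point you yourself flag as the main obstacle in Step 1. You claim that, because every $\bsx\in W$ has $x_1^j>a^j$ and $x_1^{j+1}>a^{j+1}$, choosing $\varepsilon<\min(a^j,a^{j+1})$ in \eqref{eq:Sommerfeld-epsilon} gives a \emph{uniform} Sommerfeld condition for $v$ on all of $W\cap\{|\bsx|=R\}$. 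This misreads the role of $\varepsilon$: in Corollary \ref{cor:Helmholtz-halfplane} the constraint $x_1>\varepsilon$ is in the half-plane's own coordinates, i.e.\ it means $x_1^j-a^j>\varepsilon$, a distance of at least $\varepsilon$ from the line $\Sigma^j$. For every fixed $\varepsilon>0$ the wedge $W$ contains points arbitrarily far out that lie within distance $\varepsilon$ of $\Sigma^j$ (all along the boundary ray $\Omega^{j+1}\cap\Sigma^j$), and there Corollary \ref{cor:Helmholtz-halfplane} gives no control whatsoever on $\partial_r U^j(\varphi^j)-\ri k\,U^j(\varphi^j)$; the corollary is deliberately stated only away from the boundary, and the paper explicitly identifies this as ``the issue''. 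So the uniform radiation condition for $v$ on $W\cap\{|\bsx|=R\}$ does not follow from what you have proved, and your subsequent Rellich computation is not yet justified near the two rays of $\partial W$.

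The paper closes this gap with an extra argument you are missing: since $v=0$ on $\partial(\Omega^j\cap\Omega^{j+1})$ and $|v|=O(|\bsx|^{-1/2})$ up to the boundary by \eqref{eq:ubound-largeR}, reflection across the boundary rays plus interior elliptic regularity give $|v(\bsx)|+|\nabla v(\bsx)|\le C(1+|\bsx|)^{-1/2}$ for large $|\bsx|$, uniformly up to $\partial W$. The portion of the arc $\Sigma^j_R$ lying within distance $1$ of $\partial W$ has length $O(1)$, so its contribution to $\int_{\Sigma^j_R}|\partial_r v-\ri k v|^2\,\rd s$ is $O(R^{-1})$; combining this with \eqref{eq:Sommerfeld-epsilon} (with $\varepsilon=1$) on the remainder yields only the \emph{integrated} radiation condition \eqref{eq:Sommerfeld-integrated}, which is nevertheless enough for the conical-domain uniqueness theorem of \cite{jones1953} (essentially the Rellich/angular-eigenfunction argument you sketch). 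If you insert this near-boundary estimate and downgrade your claim from a uniform to an integrated radiation condition, your Step 1 becomes the paper's. Your Step 2 is sound: the minor difference (you identify $u$ directly with the solution for data $g$, the paper reduces to $g=0$ by linearity) is immaterial, and the covering of a neighbourhood of infinity by the shrunken half-planes $\{x_1^j>a^j+1\}$ does rescue the global Sommerfeld condition there, even though your stated reason for it repeats the same $\varepsilon$ misreading.
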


\begin{proof}{For each $j$,
		 $\varphi^j$ can be decomposed as $\varphi^j=\varphi^j_1+\varphi^j_2$ with $\varphi^j_1=\varphi^j\chi_A^j$ and $\varphi^j_2=\varphi^j(1-\chi_A^j)$, $\chi_A^j$ being the characteristic function of $\{(x_1^j=a^j,x_2^j);x_2^j\in(-A,A)\}$. Since $\varphi^j\in L^2_{\rm loc}(\Sigma^j)$ satisfies \eqref{eq:RadCond},  we have that $\varphi^j_1\in L^2(\Sigma^j)$ with compact support and, for $A$ large enough, $\varphi^j_2\in L^\infty(\Sigma^j)$. From Lemma \ref{lem:UjinL2}, and by the definition \eqref{eq:def_DtD_real} of the operator $D_{j,j\pm1}$, since $(\varphi^1,..., \varphi^\NO)$ satisfies \eqref{eq:HSMM_real}, we deduce, provided we choose $A$ large enough, that  $\varphi^j_2\in C(\Sigma^j)$} for all $j$.
	
We follow now the same steps as in the proof of Proposition \ref{prop:trraceL2}. 	

The first step consists in proving equations \eqref{eq:compatibility}. Again, we introduce $v=U^j(\varphi^j)-U^{j+1}(\varphi^{j+1})$ and one can show by using Lemma \ref{lem:UjinL2} as in the proof of Proposition \ref{prop:trraceL2} that $v\in L^2_{\mathrm{loc}}(\Omega^j\cap\Omega^{j+1})\cap C^\infty(\Omega^j\cap\Omega^{j+1})\cap C(\overline{\Omega^j\cap\Omega^{j+1}}\setminus\{S_j\})$, satisfies $\Delta v+k^2v=0$ in $\Omega^j\cap\Omega^{j+1}$, and vanishes on  $\partial(\Omega^j\cap\Omega^{j+1})\setminus\{S_j\}$. Moreover, proceeding exactly as in the proof of Proposition \ref{prop:trraceL2},
one can show that  $v\in H^1_{\mathrm{loc}}(\Omega^j\cap\Omega^{j+1})$. We would like, by applying Corollary \ref{cor:Helmholtz-halfplane} to $U^j(\varphi^j)$ and $U^{j+1}(\varphi^{j+1})$, to deduce that $v$ also satisfies the Sommerfeld radiation condition in $\Omega^j\cap \Omega^{j+1}$. The issue  is that Corollary \ref{cor:Helmholtz-halfplane} does not ensure that $v$ satisfies the Sommerfeld radiation condition up to the boundary. But this difficulty can be overcome; since $v=0$ on  $\partial(\Omega^j\cap\Omega^{j+1})$, reflection arguments and standard elliptic regularity results  combined with the estimate (\ref{eq:ubound-largeR}) imply that, for $R$ large enough, there exists a constant $C>0$ such that:
\begin{equation}
	\label{eq:boundsforv}
	|v(\bsx)|+|\nabla v(\bsx)|\leq C (1+|\bsx|)^{-1/2}, \quad \bsx\in \Omega^j\cap\Omega^{j+1}, |\bsx|>R.
\end{equation}
Then, denoting  $\Sigma^j_R:=\{\bsx\in \Omega^j\cap\Omega^{j+1};|\bsx|=R\}$ and $\dsp d^j(\bsx):=\min_{\bsy \in \partial(\Omega^j\cap\Omega^{j+1})}|\bsx-\bsy|$ , one has by \eqref{eq:boundsforv} for all $R$ large enough,
$$ \int_{\bsx\in\Sigma^j_R, d^j(\bsx)<1}\left|\frac{\partial v(\bsx)}{\partial r} - \ri k v(\bsx)\right|^2ds(\bsx)\leq C(1+R)^{-1},$$
 where the constant $C$ is independent of $R$. Combined with \eqref{eq:Sommerfeld-epsilon} with $\varepsilon=1$ applied to
  $U^j(\varphi^j)$ and $U^{j+1}(\varphi^{j+1})$, this proves that
\begin{equation}\label{eq:Sommerfeld-integrated}
 \lim_{R\to +\infty}\int_{\Sigma^j_R}\left|\frac{\partial v(\bsx)}{\partial r} - \ri k v(\bsx)\right|^2ds(\bsx)=0,	
\end{equation}
i.e. $v$ satisfies the Sommerfeld radiation condition in a standard integrated form.
Since $v\in H^1_{\rm loc}(\Omega^j\cap\Omega^{j+1})$  satisfies $\Delta v+k^2v=0$ in $\Omega^j\cap\Omega^{j+1}$, the Sommerfeld radiation condition \eqref{eq:Sommerfeld-integrated}, and vanishes on  $\partial(\Omega^j\cap\Omega^{j+1})\setminus\{S_j\}$, one can conclude that $v=0$ by   uniqueness   of such problems in conical domains \cite{jones1953}.

The second step of the proof is simpler. Arguing as in the proof of Proposition \ref{prop:trraceL2}, it is enough to prove, in the case
 $g=0$, that the function   defined by
$	u=U^j(\varphi^j)$ in $\Omega^j$, $j=1,...,\NO$, is equal to 0 everywhere, and to deduce from this that each $\varphi^j=0$. Proceeding exactly as in that proof,
one can show that  $u\in H^1_{\mathrm{loc}}(\Omega)\cap C(\overline{\Omega}\setminus P)$, where $P$ is the set of corners of $\partial\Omega$, that $u$ is a solution of \eqref{pb:probleme_Dir_real} with $g=0$, and that $\varphi^j=u|_{\Sigma^j}$ on $\Sigma^j\setminus P$. Moreover, thanks to the overlaps between the halfspaces $\Omega^j$, one can deduce from Corollary \ref{cor:Helmholtz-halfplane} applied to each $U^j(\varphi^j)$  that $u$ satisfies the Sommerfeld condition \eqref{eq:Sommerfeld}. This implies that $u=0$ so that $\varphi^j=0$ for all $j$,
\end{proof}

Theorem \ref{thm:HSMMUnique} is a (new) well-posedness result for \eqref{eq:HSMM_real} in the case of real $k$. And, indeed, numerical solution of \eqref{eq:HSMM_real} works well \cite{BB-Fliss-Tonnoir-2018,Bon-Fli-Tja-2019} for real $k$. However, there are still significant gaps in our theoretical understanding of this formulation. In particular, while \eqref{eq:HSMM_complexe} for complex wavenumber $k$ can be written formally in operator form satisfying a Fredholm property, in the case when $k$ is real we know of no function space setting for which this formulation makes sense, where the  $D_{j,j+1}$ are well-defined bounded linear operators.

\section{General configurations}
\label{sec-CHSMgeneralcase}
Let us now recall how to extend the HSM formulation to the general problem presented in the introduction and extend the previous uniqueness result to this new formulation. More precisely, for a real wavenumber $k>0$ we consider,  as discussed in \S\ref{sec:scattering problem}, the isotropic Helmholtz equation   \eqref{eq:helmholtz-variable} in a Lipschitz domain $\Omega$,
	where $\rho-1\in L^\infty(\Omega)$ and $f\in L^2(\Omega)$ are compactly supported and $\Omega$ is $\R^2$ or $\R^2$ minus a set of Lipschitz obstacles which are supposed to lie in a bounded domain. To ensure uniqueness we impose some constraint on $\rho$, e.g.\ that $\rho$ is real-valued or that $\Im(\rho)\geq 0$. As announced in \S\ref{sec:scattering problem} the problem we consider is to seek $u\in H^1_{\mathrm{loc}}(\Omega)$ that satisfies \eqref{eq:helmholtz-variable}, the Sommerfeld radiation condition \eqref{eq:Sommerfeld}, and a boundary condition on $\Gamma= \partial \Omega$. To be specific we will impose the homogeneous Neumann condition
\begin{equation} \label{eq:neumann}
\frac{\partial u}{\partial n} = 0 \quad \mbox{on} \quad \Gamma,
\end{equation}
but the modifications to use other boundary conditions and/or include inhomogeneous terms are straightforward.

It is well known that this problem is uniquely solvable; in particular, with one of the above constraints on $\rho$, uniqueness follows by a Green's identity, the Rellich lemma, and unique continuation arguments (e.g., \cite[Theorem 8.7]{CoKr:98}).

Let $\mathcal{O}$ be   the interior of
a convex polygon containing the supports of all these perturbations.  The half-space matching method has been mainly presented for the case where $\mathcal{O}$ is a   square   (see \cite{BB-Fliss-Tonnoir-2018}). But, whereas in the problem in \S \ref{sec:HSM_real} the polygon $\O$ is given and   the  number of trace unknowns is imposed by the number of sides of   $\O$, here   we have freedom to choose the polygon $\mathcal{O}$. In particular, choosing    $\mathcal{O}$  to be  a triangle   has the advantage of minimising   the number of trace unknowns. In the sequel, we suppose that $\mathcal{O}$ is a polygon with $\mathcal{N}$ edges and we adopt  the same notations as introduced in \S \ref{section-HSMcomplexfreq} for the edges $\Gamma^1,\ldots,\Gamma^\mathcal{N}$, the angles between the edges, the $\mathcal{N}$ overlapping half-planes $\Omega^1,\ldots\Omega^\mathcal{N}$, their boundaries $\Sigma^1,\ldots,\Sigma^\mathcal{N}$, and their associated local coordinate systems.

	Let us now introduce a  bounded   Lipschitz domain $\Omega_b\subset \Omega$ containing $\overline{\mathcal{O}} \cap \Omega$   such that $\partial \Omega_b\setminus \Gamma$ is connected (for examples see Figure \ref{fig:general-configuration}), and a partition $\Gamma_b^1,\ldots,\Gamma_b^\mathcal{N}$ of   $\partial\Omega_b  \setminus \Gamma$ such that, for all $j=1,...,\NO$,   $\Gamma_b^j$ is connected and  $\Gamma_b^j\subset \Omega^j$ with $\overline{\Gamma_b^j}\cap \Sigma^j=\emptyset$. There is, of course, no unique  choice for   such a partition, but the uniqueness result   given in Theorem \ref{thm:HSMMUnique_casgeneral}   below implies that the   solution of the HSM formulation that we will write down   is independent of the choice of the partition.
	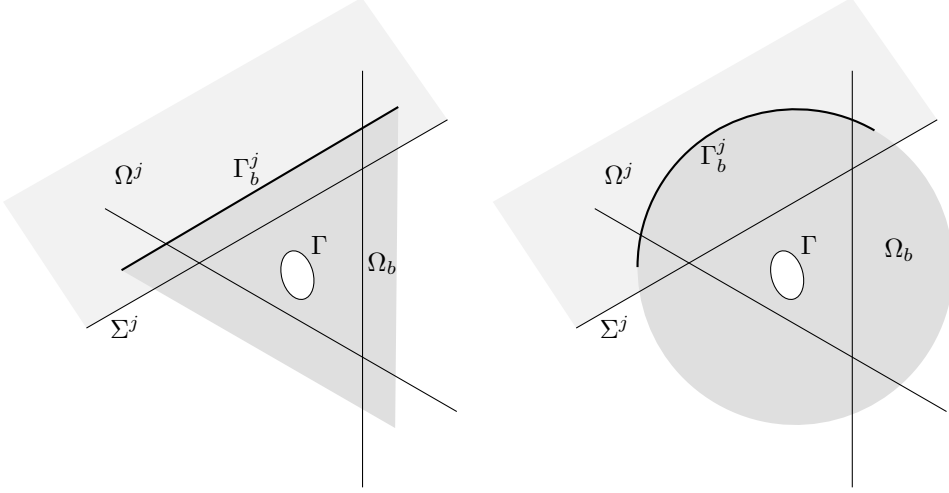
\begin{figure}[ht]
		\centering
		\begin{subfigure}{.5\textwidth}
\begin{tikzpicture}[scale=.6]
	\fill[rotate=-30,color=gray!10] ({-sqrt(3)-1.8},-3) -- ({5/3*sqrt(3)-1.8}, 5) -- ({5/3*sqrt(3)-4.8}, 6.4) --({-sqrt(3)-4.8},-1.5) -- cycle ;
    \fill[rotate=-30,color=gray!25] (-3.5,-1.5)--(3.5,-1.5)--(0.,4.7)--cycle;
    \draw[rotate=-30,thick] (-3.5,-1.5)--(0.,4.7);
	\fill[rotate=15,white] (0.2,0.3)ellipse(0.35 and 0.55);
	\draw[rotate=15] (0.2,0.3)ellipse(0.35 and 0.55);
	\draw[rotate=-30] (-4.5,-.5) -- (4.5,-.5);
	\draw[rotate=-30] ({sqrt(3)+1.8},-3) -- ({1.8-5/3*sqrt(3)}, 5);
	\draw[rotate=-30] ({-sqrt(3)-1.8},-3) -- ({5/3*sqrt(3)-1.8}, 5);
	\node at (0.6, 1) {$\Gamma$};
	\node at (2., 0.6) {$\Omega_b$};
	\node at (-1, 2.7) {$\Gamma^j_b$};
	\node at (-3.6,2.6) {$\Omega^j$};
	\node at (-3.7, -0.8) {$\Sigma^j$};
\end{tikzpicture}\end{subfigure}\begin{subfigure}{.5\textwidth}
			\begin{tikzpicture}[scale=.6]
				\fill[rotate=-30,color=gray!10] ({-sqrt(3)-1.8},-3) -- ({5/3*sqrt(3)-1.8}, 5) -- ({5/3*sqrt(3)-4.8}, 6.4) --({-sqrt(3)-4.8},-1.5) -- cycle ;
				\fill[rotate=-30,color=gray!25](0.,0.6) circle (3.5) ;
				
				\draw[rotate=-30,thick] (0,{0.6+3.5}) arc (90:210:3.5) ;
				\fill[rotate=15,white] (0.2,0.3)ellipse(0.35 and 0.55);
				\draw[rotate=15] (0.2,0.3)ellipse(0.35 and 0.55);
				\draw[rotate=-30] (-4.5,-.5) -- (4.5,-.5);
				\draw[rotate=-30] ({sqrt(3)+1.8},-3) -- ({1.8-5/3*sqrt(3)}, 5);
				\draw[rotate=-30] ({-sqrt(3)-1.8},-3) -- ({5/3*sqrt(3)-1.8}, 5);
				\node at (0.6, 1) {$\Gamma$};
				\node at (2.6, 0.9) {$\Omega_b$};
				\node at (-1.5, 3.) {$\Gamma^j_b$};
				\node at (-3.6,2.6) {$\Omega^j$};
				\node at (-3.7, -0.8) {$\Sigma^j$};
		\end{tikzpicture}\end{subfigure}
		\caption{Two examples of possible geometries for our general configuration: $\mathcal O$ is the triangle bounded by the lines $\Sigma^1$, $\Sigma^2$, and $\Sigma^3$, and  $\Omega_b$, shaded in darker gray, is a larger triangle in the left figure and a disc in the right figure.}
		\label{fig:general-configuration}
	\end{figure}

	The unknowns of the HSM formulation are the traces  $\varphi^1,..., \varphi^\NO$ of the   solution $u$   on the infinite lines $\Sigma^1,...,\Sigma^\NO$, and the restriction $u_b:=u|_{\Omega^b}$ to the   bounded domain $\Omega_b$. Let us derive the equations linking $\varphi^1,..., \varphi^\NO$ and $u_b$. On the one hand, one can show as in the case of scattering by a polygon (see \S \ref{sub-HSMrealfreqformulation}) that the $\varphi^j$'s satisfy the equations
	\begin{equation}\label{eq:HSMM_real_gen}
		\begin{array}{c}
			\begin{array}{|ll}
				\varphi^j=D_{j-1,j}\,\varphi^{j-1},& \text{on}\;\Sigma^j\cap\Omega^{j-1},\\
				\varphi^j=D_{j+1,j}\,\varphi^{j+1},& \text{on}\;\Sigma^j\cap\Omega^{j+1},
			\end{array}\quad j\in\Z/\mathcal{N}\Z,
		\end{array}
	\end{equation}
	where the operators $D_{j,j\pm1}$ are defined in \eqref{eq:def_DtD_real}. Compared to the problem of scattering by a polygon, the conditions for the traces on $\Gamma^j$ (linked to the given data $g$ in the case of scattering by a polygon) have to be replaced by an equality between $\varphi^j$ and $u_b$ on $\Gamma^j$:
	\[
		\varphi^j=u_b\;\text{on}\;\Gamma^j,\;j=1,...,\NO.
	\]
	As in the case of scattering by a polygon, since the wavenumber is real, the traces $\varphi^j$ are not in $L^2$ but they decay like $|y_2^j|^{-1/2}$ at infinity and we will require, as in \S  \ref{sub-HSMrealfreqformulation},  that, for $j=1,...,\NO$, $\varphi^j$ satisfies the asymptotic condition \eqref{eq:RadCond} at $\pm\infty$.
	
	On the other hand, we can derive a variational formulation for $u_b$ in $\Omega_b$. Since $-\Delta u_b-k^2\rho u_b=f$ in $\Omega_b$, $f$ is supported in $\mathcal{O}^\prime:= \mathcal{O}\cap \Omega$, and $u$ satisfies \eqref{eq:neumann}, the following Green's identity holds for all $v_b\in H^1(\Omega_b)$, where $n$ is the normal pointing out of $\Omega_b$:
\begin{equation}
\label{FVub1}
\int_{\Omega_b}\left(\nabla u_b\cdot \overline{\nabla v_b}-k^2 \rho u_b  \overline{v_b}\right)-\int_{\partial\Omega_b\setminus \Gamma}\frac{\partial u_b}{\partial n}\overline{v_b}\, =\int_{\mathcal{O}^\prime} f\overline{v_b}.
\end{equation}
The last idea is to replace the normal derivative on each part $\Gamma_b^j$ of $\partial\Omega_b\setminus \Gamma$ by an integral representation as a function of $\varphi^j$. Indeed, since, for all $j=1,...,\NO$, $\Gamma_b^j\subset \Omega^j$, we must have
\begin{equation}\label{eq:comp_robin}
	\frac{\partial u_b}{\partial n}-\ri ku_b=\frac{\partial U^j(\varphi^j)}{\partial n}-\ri kU^j(\varphi^j)\mbox{ on }\Gamma^j_b,\;j=1,...,\NO,
	\end{equation}
where $U^j(\varphi^j)$ is the restriction of the solution $u$ to the half-plane $\Omega^j$ and is expressed in terms of $\varphi^j$ in \eqref{eq:hprGreen}. Let us emphasize that our choice of   imposing equality of Robin traces instead of normal derivatives is so that later we have uniqueness for the formulation for all $k>0$. Note also that, since $\overline{\Gamma_b^j}\cap {\Sigma^j}=\emptyset$, the Robin trace of $U^j(\varphi^j)$ on $\Gamma^j_b$ is ${C}^\infty$ (see Lemma \ref{lem:UjinL2}). To express \eqref{eq:comp_robin} succinctly we introduce the Dirichlet-to-Robin  (DtR) operators $\Lambda^j$, given by
\begin{equation}\label{eq:def_DtR}
	\Lambda^j\varphi:=\left(\frac{\partial U^j(\varphi)}{\partial n}-\ri kU^j(\varphi)\right)\Big|_{\Gamma^j_b}.
\end{equation}
 We note, by Lemma \ref{lem:UjinL2}, that $\Lambda^j\varphi$   is well-defined (at least) for all functions $\varphi\in L^2(\Sigma^j)+ L^\infty(\Sigma^j)$.
One can use \eqref{eq:hprGreen} to   write   $\Lambda^j\varphi$ explicitly, for $j=1,...,\NO$, as
\begin{multline*}
	\Lambda^j\varphi(\bsx^j)= \int_{\R}\left(\nabla\mathcal{H}(k;x_1^j-a^j,x_2^j-y_2^j)\cdot n(x_1^j,x_2^j)\right.\\\left.-\ri k \mathcal{H}(k;x_1^j-a^j,x_2^j-y_2^j)\right) \,\varphi(a^j,y_2^j)\, \rd y_2^j,\quad {\bsx}^j=(x_1^j,x_2^j) \in \Gamma^j_b.
	\end{multline*}	
	With this notation, the equality \eqref{eq:comp_robin} can be written as
	\begin{equation}\label{eq:comp_robin_bis}
		\frac{\partial u_b}{\partial n}-\ri ku_b=\Lambda^j\varphi^j\mbox{ on }\Gamma^j_b,\;j=1,...,\NO.
		\end{equation}
		Our complete formulation reads as follows:
		\begin{equation}\label{eq:HSMM_casgeneral}
		\begin{array}{c}
		\begin{array}{|lcl}
		\varphi^j=D_{j-1,j}\,\varphi^{j-1},& \text{on}\;\Sigma^j\cap\Omega^{j-1},\\
		\varphi^j=u_b\;\text{on}\;\Gamma^j,\\
		\varphi^j=D_{j+1,j}\,\varphi^{j+1},& \text{on}\;\Sigma^j\cap\Omega^{j+1},\\
		\end{array} \quad j\in \Z/\mathcal{N}\Z,\\[20pt]
		\forall v_b\in  H^1(\Omega_b),\hfill\\
		\displaystyle\int_{\Omega_b}\left(\nabla u_b\cdot \overline{\nabla v_b}-k^2 \rho u_b  \overline{v_b}\right)-\ri k\sum_{j=0}^3\int_{\Gamma^j_b} u_b  \overline{v_b} -\sum_{j=0}^3\int_{\Gamma^j_b}\Lambda^j \varphi^j\overline{v_b}=\int_{\mathcal{O}^\prime}f\overline{v_b},
		\end{array}
		\end{equation}
		where   the operators $D_{j,j\pm1}$ are defined in \eqref{eq:def_DtD_real}, and the operators $\Lambda^j$ in \eqref{eq:def_DtR}.

The following theorem gives the  well-posedness of this formulation.
\begin{theorem}\label{thm:HSMMUnique_casgeneral}
Suppose $k>0$. There exists, for each $f\in L^2(\O^\prime)$, a unique solution $(\varphi^1,..., \varphi^\NO, u_b)\in L^2_{\rm loc}(\Sigma^1)\times\ldots\times L^2_{\rm loc}(\Sigma^\NO)\times H^1(\Omega_b)$ of \eqref{eq:HSMM_casgeneral} such that, for all $j=1,...,\NO$, $\varphi^j$ satisfies the radiation condition \eqref{eq:RadCond}.
	\end{theorem}
	Existence holds by the construction above of a solution to \eqref{eq:HSMM_casgeneral} from the unique solution of (\ref{eq:helmholtz-variable}, \ref{eq:Sommerfeld}, \ref{eq:neumann}). Uniqueness follows from the following proposition since (\ref{eq:helmholtz-variable}, \ref{eq:Sommerfeld}, \ref{eq:neumann}) is well-posed.
\begin{proposition} \label{prop:HSMMUnique_casgeneral}
Suppose $k>0$ and let $(\varphi^1,..., \varphi^\NO, u_b)\in L^2_{\rm loc}(\Sigma^1)\times\ldots\times L^2_{\rm loc}(\Sigma^\NO)\times H^1(\Omega_b)$ be a solution of \eqref{eq:HSMM_casgeneral} (with $f\in L^2(\O^\prime)$) such that, for all $j=1,...,\NO$, $\varphi^j$ satisfies the radiation condition \eqref{eq:RadCond}.
Then, for all $j\in \Z/\mathcal{N}\Z$, the compatibility equations \eqref{eq:compatibility} hold   and $u_b=U^j(\varphi^j)$ in $\Omega_b\cap\Omega^j$. Moreover,
 the function  defined by
$	u=U^j(\varphi^j)$ in $\Omega^j$, for $j=1,...,\NO$, and by $u=u_b$ in $\Omega_b$
is the unique solution $u\in H^1_{\rm loc}(\Omega)$ of (\ref{eq:helmholtz-variable}, \ref{eq:Sommerfeld}, \ref{eq:neumann}). Further, $\varphi^j=u|_{\Sigma^j}$ for each $j$.
\end{proposition}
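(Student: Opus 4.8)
The plan is to follow the three-step template of the proof of Proposition~\ref{prop:HSMMUnique}, the only genuinely new ingredient being the matching of the interior field $u_b$ with the exterior half-plane representations across the artificial interfaces $\Gamma_b^j$. Since existence already holds by construction, the content is to show that any solution of \eqref{eq:HSMM_casgeneral} satisfying \eqref{eq:RadCond} reconstructs to the (unique) solution of the scattering problem. Throughout I would decompose each $\varphi^j=\varphi_1^j+\varphi_2^j$ into a compactly supported $L^2$ part and a continuous part obeying \eqref{eq:RadCond}, exactly as at the start of the proof of Proposition~\ref{prop:HSMMUnique}, so that Lemma~\ref{lem:UjinL2} and Corollary~\ref{cor:Helmholtz-halfplane} apply to each $U^j(\varphi^j)$.

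First I would establish the compatibility relations \eqref{eq:compatibility} in each overlap $\Omega^j\cap\Omega^{j+1}$. This step is verbatim that of Proposition~\ref{prop:HSMMUnique}: the difference $v=U^j(\varphi^j)-U^{j+1}(\varphi^{j+1})$ solves the Helmholtz equation, vanishes on $\partial(\Omega^j\cap\Omega^{j+1})\setminus\{S_j\}$ by the two overlap equations in \eqref{eq:HSMM_real_gen}, is $H^1_{\mathrm{loc}}$ by the Grisvard argument (Lemma~\ref{lem-singularities}) and satisfies the integrated Sommerfeld condition by Corollary~\ref{cor:Helmholtz-halfplane}, whence $v=0$ by uniqueness in a conical domain. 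Note this uses only the two $D_{j\pm1,j}$ relations, which are identical to those in \eqref{eq:HSMM_real}; the edge relation is not needed here. Consequently $u_e:=U^j(\varphi^j)$ on $\Omega^j$ is a single well-defined function on $\Omega\setminus\overline{\mathcal{O}}$, solving the homogeneous Helmholtz equation there, and arguing once more as in Proposition~\ref{prop:HSMMUnique} (Grisvard analysis, using $\varphi^j\in L^2_{\mathrm{loc}}$ to kill the reentrant-corner coefficients at the vertices $S_j$) shows $u_e\in H^1_{\mathrm{loc}}(\Omega\setminus\overline{\mathcal{O}})$.

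The crux is then to prove $u_b=u_e$ on $D:=\Omega_b\setminus\overline{\mathcal{O}}$, equivalently $u_b=U^j(\varphi^j)$ on $\Omega_b\cap\Omega^j$ for each $j$, which is what makes the reconstruction consistent. Set $w:=u_b-u_e\in H^1(D)$. On $D$ we have $\rho\equiv1$ and $f\equiv0$, so $w$ solves the homogeneous Helmholtz equation. On the inner boundary $\partial\mathcal{O}=\bigcup_j\Gamma^j$ we have $w=0$, since $u_b=\varphi^j=u_e$ on $\Gamma^j\subset\Sigma^j$ (using the edge relation $\varphi^j=u_b$ of \eqref{eq:HSMM_casgeneral} together with $U^j(\varphi^j)|_{\Sigma^j}=\varphi^j$); on the outer boundary $\partial\Omega_b\setminus\Gamma=\bigcup_j\Gamma_b^j$ the Robin coupling \eqref{eq:comp_robin_bis}, which the variational equation in \eqref{eq:HSMM_casgeneral} encodes as a natural condition via \eqref{eq:def_DtR}, gives $\partial w/\partial n-\ri k\,w=0$. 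Applying Green's first identity on $D$, the contribution of $\partial\mathcal{O}$ drops out since $w=0$ there, so $\int_{\partial D}(\partial w/\partial n)\overline{w}=\ri k\sum_j\int_{\Gamma_b^j}|w|^2$ equals the real quantity $\int_D|\nabla w|^2-k^2\int_D|w|^2$; taking imaginary parts and using $k>0$ forces $w=0$, hence $\partial w/\partial n=0$, on each $\Gamma_b^j$. Unique continuation from the zero Cauchy data on the relatively open set $\bigcup_j\Gamma_b^j$ then yields $w\equiv0$ on the connected set $D$. I expect this to be the main obstacle, and it is precisely here that the choice of Robin (rather than Neumann) matching in \eqref{eq:comp_robin} is essential: it is the imaginary part of the Robin term that, for every $k>0$, pins down $w$ on $\Gamma_b^j$ and circumvents the possibility that $k^2$ is a Dirichlet eigenvalue of $D$.

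Finally I would assemble the global field by setting $u:=u_b$ on $\Omega_b$ and $u:=U^j(\varphi^j)$ on $\Omega^j$. This is consistent on the exterior overlaps by the first step, and by the second step $u=u_e$ in a full neighbourhood of each $\Gamma_b^j$, where $u_e$ is smooth, so $u$ is smooth across every $\Gamma_b^j$ and hence $u\in H^1_{\mathrm{loc}}(\Omega)$. The variational formulation in \eqref{eq:HSMM_casgeneral} then shows that $u$ satisfies $-\Delta u-k^2\rho u=f$ weakly in $\Omega$ together with the Neumann condition \eqref{eq:neumann} on $\Gamma$, the interface terms on $\Gamma_b^j$ cancelling by the matched traces, while Corollary~\ref{cor:Helmholtz-halfplane} applied in each $\Omega^j$ and combined across the overlaps exactly as at the end of the proof of Proposition~\ref{prop:HSMMUnique} gives the Sommerfeld condition \eqref{eq:Sommerfeld}. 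By well-posedness of (\ref{eq:helmholtz-variable},\ref{eq:Sommerfeld},\ref{eq:neumann}), $u$ is its unique solution; in particular $u_b=u|_{\Omega_b}$ and $\varphi^j=U^j(\varphi^j)|_{\Sigma^j}=u|_{\Sigma^j}$, which also delivers uniqueness of the HSM solution.
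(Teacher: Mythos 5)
Your treatment of the genuinely new ingredient --- the Robin matching across the $\Gamma_b^j$ --- is exactly the paper's argument: set $w=u_b-u_e$ on $\Omega_b\setminus\overline{\mathcal{O}}$, use $w=0$ on $\partial\mathcal{O}$ and $\partial w/\partial n-\ri k w=0$ on $\partial\Omega_b\setminus\Gamma$, take the imaginary part of Green's first identity to kill $w$ on the outer boundary, and finish with Holmgren/unique continuation; your remark on why Robin rather than Neumann traces are matched is also the paper's stated motivation. The final assembly step likewise matches.

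Where you diverge is in how you handle the exterior field, and there your argument has a gap. The paper does not re-run the compatibility and corner analysis: it introduces $u_\infty$, the unique solution of the exterior Dirichlet problem (\ref{pb:probleme_Dir_real}, \ref{eq:Sommerfeld}) with data $g=u_b|_{\partial\mathcal{O}}\in H^{1/2}(\partial\mathcal{O})$, observes that the traces of $u_\infty$ solve \eqref{eq:HSMM_real}, and invokes Theorem \ref{thm:HSMMUnique} to conclude at one stroke that $\varphi^j=u_\infty|_{\Sigma^j}$ and $U^j(\varphi^j)=u_\infty$ in $\Omega^j$ --- which delivers \eqref{eq:compatibility}, the $H^1_{\rm loc}$ regularity of the exterior field up to $\partial\mathcal{O}$, and the trace identity $u_e=\varphi^j=u_b$ on $\Gamma^j$ needed for the Green's identity step. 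Your direct route reproves the overlap compatibility (which is fine: as you note, only the two $D_{j\pm 1,j}$ relations enter, and the overlap wedges do not meet the edges $\Gamma^j$), but your claim that the Grisvard analysis of Proposition \ref{prop:HSMMUnique} then ``shows $u_e\in H^1_{\rm loc}(\Omega\setminus\overline{\mathcal{O}})$'' with trace $\varphi^j$ on $\Gamma^j$ does not go through verbatim: in Propositions \ref{prop:trraceL2} and \ref{prop:HSMMUnique} that analysis is carried out only for $g=0$, precisely because Lemma \ref{lem-singularities} requires $w\in C(\bar Q\setminus P)$, i.e.\ continuity of the boundary data away from the corners, whereas here $\varphi^j|_{\Gamma^j}=u_b|_{\Gamma^j}$ is merely $H^{1/2}$. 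Without this you do not get $w=u_b-u_e\in H^1(\Omega_b\setminus\overline{\mathcal{O}})$ with vanishing trace on $\partial\mathcal{O}$, and the Green's identity step is not licensed. The repair is exactly the paper's linearity detour via $u_\infty$ and Theorem \ref{thm:HSMMUnique}; with that substitution the rest of your proof is correct.
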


\begin{proof}
	Let $(\varphi^1,..., \varphi^\NO, u_b)\in L^2_{\rm loc}(\Sigma^1)\times\ldots\times L^2_{\rm loc}(\Sigma^\NO)\times H^1(\Omega_b)$ be a solution of \eqref{eq:HSMM_casgeneral}  such that, for all $j=1,...,\NO$, $\varphi^j$ satisfies the radiation condition \eqref{eq:RadCond},
	and let us denote by $u_\infty\in H^1_{\rm loc}(\R^2\setminus\overline{\mathcal{O}})$ the unique solution of (\ref{pb:probleme_Dir_real}, \ref{eq:Sommerfeld}) with $g=u_b|_{\partial\mathcal{O}}\in H^{1/2}(\partial\mathcal{O})$. Then as  shown in  \S \ref{sec:HSM_real}, the set of traces of $u_\infty$ on $
	\Sigma^1,\ldots,\Sigma^\NO$ solves \eqref{eq:HSMM_real}, i.e.\ the first three equations of \eqref{eq:HSMM_casgeneral}. It follows from Theorem \ref{thm:HSMMUnique} and from \eqref{eq:HSMM_casgeneral} that $\varphi^j=u_\infty|_{\Sigma^j}$ and $u_\infty=U^j(\varphi^j)$ in $\Omega^j$, for $j=1,...,\NO$. In particular,  the compatibility equations \eqref{eq:compatibility} hold and $\varphi^j=u_\infty=u_b$ on $\Gamma^j$ for $j=1,...,\NO$. Moreover, we deduce from the last equation of \eqref{eq:HSMM_casgeneral} that $-\Delta u_b-k^2\rho\, u_b=f$ in $\Omega_b$, that
	$$\frac{\partial u_b}{\partial n} = 0 \quad \mbox{on} \quad \Gamma,$$
	and that \eqref{eq:comp_robin_bis} holds. By definition \eqref{eq:def_DtR} of the DtR operators, we have then
\begin{equation} \label{eq:Lambdajeq}
		\Lambda^j\varphi^j=\left(\frac{\partial u_\infty}{\partial n}-\ri ku_\infty\right)\Big|_{\Gamma^j_b}, 
\quad j=1,...,\NO.
\end{equation}
	 Consequently, $v:=u_b-u_\infty$ belongs to $ H^1(\Omega_b\backslash \overline{\mathcal{O}})$ and satisfies
	\begin{equation} \label{pb:interiorbvp}
	\begin{array}{|lcr}
		\Delta v +  k^2v= 0 & \text{in} & \Omega_b\backslash \overline{\mathcal{O}},\\[4pt]
	v=0&\text{on}&\partial\mathcal{O},\\[4pt]
	\displaystyle \frac{\partial v}{\partial n}-\ri kv=0&\text{on}&\partial\Omega_b\backslash \Gamma,
	\end{array}
	\end{equation}
the last of these equations following from \eqref{eq:comp_robin_bis} and \eqref{eq:Lambdajeq}.
	But, for every $k>0$, this homogeneous problem has no solution except $v=0$. (To see this apply Green's identity (cf.\ \eqref{FVub1}) in $\Omega_b\backslash \overline{\mathcal{O}}$  to deduce that $\int_{\partial\Omega_b\setminus \Gamma} |v|^2 =0$, so that $v=\partial v/\partial n=0$ on $\partial\Omega_b\setminus \Gamma$, and apply Holmgren's uniqueness theorem \cite[p.~104]{ChGrLaSp:11}.) Thus $u_b=u_\infty$ in $\Omega_b\backslash \overline{\mathcal{O}}$ (in particular $u_b=u_\infty=U^j(\varphi^j)$ in $\Omega_b\cap \Omega^j$, $j=1,...,\NO$) so that the function
	$$w:=\left\{\begin{array}{lcr}
	u_b & \text{in} & \Omega_b,\\[4pt]
	u_\infty&\text{in}&\R^2\backslash \overline{\mathcal{O}},
	\end{array}\right.$$
	is well-defined, and is  the unique solution $u\in H^1_{\rm loc}(\Omega)$ of (\ref{eq:helmholtz-variable}, \ref{eq:Sommerfeld}, \ref{eq:neumann}).
	\end{proof}		

\section{Conclusion} \label{sec:conclusion}
The objective of this paper was to prove the well-posedness of the HSM formulation applied to the Helmholtz equation \eqref{eq:helmholtz-variable}  in the case of a real wavenumber $k>0$. This objective has been achieved: we have proved
  well-posedness  provided we impose the radiation condition \eqref{eq:RadCond} at infinity on the trace-unknowns $\varphi^j$ of the HSM  equations, this radiation condition analogous to the standard Sommerfeld radiation condition for the original boundary value problem. Let us recall that the results of the present paper  also complete a proof of well-posedness of the complex-scaled HSM formulation presented in \cite[\S 5]{FrenchBritish1}.

  An open question is whether the radiation condition \eqref{eq:RadCond} on the trace-unknowns, while natural, is necessary for uniqueness. The answer is not at all clear to us. But one piece of evidence that suggests that this radiation condition may not in fact be needed is that one achieves accurate results in numerical experiments by simply truncating the trace unknowns, setting, for each $j$, $\varphi^j=0$ outside some sufficiently large finite section of $\Sigma^j$, not making any use of the  radiation condition \eqref{eq:RadCond} (see \cite[Fig.~7]{BB-Fliss-Tonnoir-2018}).

Let us finish by mentioning that our main results should be extendable to more complex configurations with obstacles extending to infinity. For instance, one might consider  the general configuration of \S\ref{sec-CHSMgeneralcase} but add a screen that is a semi-infinite line $\gamma^1$,  choosing the half-planes $\Omega^j$ so that $\gamma^1$ is orthogonal to $\Sigma^1$ and $\gamma^1\cap \Omega^j=\emptyset$, for all $j\neq 1$ (see the left hand part of Figure \ref{Fig:extensions}). In this case $\Omega^1\setminus \gamma^1$ has two connected components that are quarter-planes. If $\gamma^1$ is sound soft or sound hard, i.e.\ the solution satisfies a Dirichlet or Neumann homogeneous boundary condition on $\gamma^1$,  one can derive an expression for $U^1(\varphi^1)$, the solution in $\Omega^1$ given Dirichlet data $\varphi^1$ on $\Sigma^1$, by solving separately in each quarter-plane, combining formula \eqref{eq:hprPhi} with reflection arguments.

More interesting is the case where two parallel semi-infinite lines $\gamma^j_\pm$ are introduced, so that now, for some $j$, $\Omega^j$ has three connected components, two of them quarter-planes and one of them a semi-infinite strip which is a waveguide (this is the case for $\Omega^1$ and $\Omega^3$ in the right hand part of Figure \ref{Fig:extensions}). One can derive an expression for $U^j(\varphi^j)$ in each quarter-plane as above, and an expression in the waveguide by solving as a  modal series expansion.

For each of these configurations, and for many other variations on these geometries, we can write down HSM formulations, and we expect that one should be able to prove well-posedness of the HSM formulation and equivalence with other formulations by adapting the results proved in the present paper.
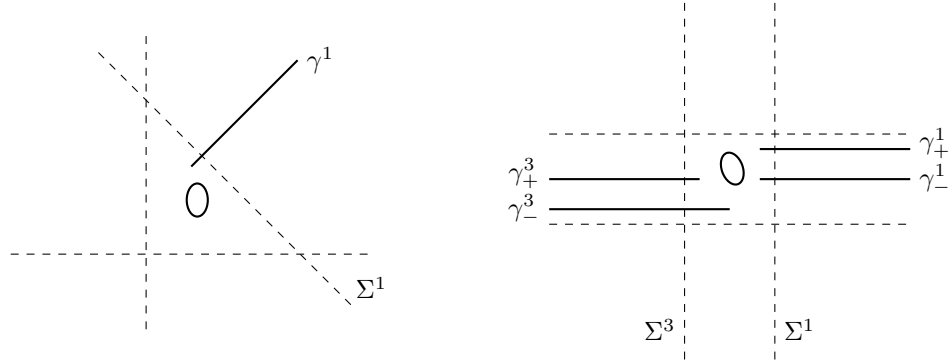
\begin{figure}[!h]
	\begin{center}
		\begin{subfigure}{.5\textwidth}
			\begin{tikzpicture}[scale=.4]
				\fill[white] (0.2,0.3)ellipse(0.35 and 0.55);
				\draw[thick] (0.2,0.3)ellipse(0.35 and 0.55);
				\draw[dashed] (-6,-1.5) -- (6,-1.5);
				\draw[dashed] (-1.5,-4) -- (-1.5,6);
				\draw[dashed,rotate=45] (1.5,-6) -- (1.5,6);
				\draw[thick,rotate=45] (6,1)--(1,1);
				\draw[right,rotate=45] (1.8,-5.5) node  {$\Sigma^1$};
				\draw[right,rotate=45] (6,1) node  {$\gamma^1$};
			\end{tikzpicture}
	
\end{subfigure}\begin{subfigure}{.5\textwidth}
		\begin{tikzpicture}[scale=.4]
		\fill[white,rotate=20] (0.2,0.3)ellipse(0.35 and 0.55);
		\draw[thick,rotate=20] (0.2,0.3)ellipse(0.35 and 0.55);
		\draw[dashed] (-6,-1.5) -- (6,-1.5);
		\draw[dashed] (-6,1.5) -- (6,1.5);
		\draw[dashed] (-1.5,-6) -- (-1.5,6);
		\draw[dashed] (1.5,-6) -- (1.5,6);
		\draw[thick] (-6,-1)--(0,-1);
		\draw[thick] (-6,0)--(-1,0);
		\draw[thick] (6,1)--(1,1);
		\draw[thick] (6,0)--(1,0);
		\draw[right] (1.5,-5) node  {$\Sigma^1$};
		\draw[left] (-1.5,-5) node  {$\Sigma^3$};
		\draw[right] (6,1.2) node  {$\gamma^1_+$};
		\draw[right] (6,0) node  {$\gamma^1_-$};
		\draw[left] (-6,-1) node  {$\gamma^3_-$};
		\draw[left] (-6,0.2) node  {$\gamma^3_+$};
	\end{tikzpicture}	
\end{subfigure}
	\end{center}
	\caption{Two geometries that could be considered.}\label{Fig:extensions}
\end{figure}

Note that if only one semi-infinite line is introduced, as in the left hand side of Figure \ref{Fig:extensions}, and no other obstacles or inhomogeneities are present, one recovers the famous Sommerfeld  half-plane problem \cite[\S38]{So:64} 
that can be solved using the Wiener-Hopf method (or other techniques). Analytical methods, notably the Wiener-Hopf method, have been extended to a variety of more complex configurations with waveguides and wedges, e.g., \cite{mittra1971analytical,LaAb:07,KiEtAl2021}, though not to all configurations that should be treatable by the extensions of the HSM method to unbounded obstacles that we suggest above.

An interesting  question is whether our uniqueness and well-posedness results for the HSM formulation can be extended to open waveguides. (The HSM method has been used to compute numerical solutions by Ott \cite{Ott} in such cases where the medium in each half-space $\Omega^j$ is stratified in the $x_2^j$ direction.) The difficulty is that the corresponding Green's function is no longer available in closed form, which makes the study of the properties of the half-plane solution more intricate. A further difference with the cases studied in this paper, and the extensions suggested above, is that, due to the possible existence of guided waves in the open waveguides, the traces $\varphi^j$ may have an oscillatory non-decaying behavior at infinity that should be taken into account in a modified radiation condition, replacing \eqref{eq:RadCond}.

\end{document}